\newcounter{theorem}
\newtheorem{thm}[theorem]{Theorem}
\newtheorem*{claim}{Claim}
\newtheorem{lemma}[theorem]{Lemma}
\newtheorem{prop}[theorem]{Proposition}
\newtheorem{cor}[theorem]{Corollary}
\newcounter{mparcnt}
\theoremstyle{definition}
\newtheorem{defn}[theorem]{Definition}
\theoremstyle{remark}
\newtheorem*{remark*}{Remark}
\newtheorem{remark}[theorem]{Remark}
\theoremstyle{plain}
\newcounter{theoremintro}
\newtheorem{thmIntro}[theoremintro]{Theorem}
\newtheorem{corIntro}[theoremintro]{Corollary}
\numberwithin{equation}{section}
\numberwithin{theorem}{section}
\newcommand{\e}{\epsilon}
\newcommand{\dl}{\delta}
\newcommand{\suppo}{\mathrm{supp}^{\circ}}
\newcommand{\N}{\mathbb{N}}
\newcommand{\supp}{\mathrm{supp}}
\renewcommand{\setminus}{\backslash}
\newcommand{\cU}{\mathcal{U}}
\newcommand{\TODO}[1]{}
\begin{document}

\title[uniform property \texorpdfstring{$\Gamma$}{Gamma} and the small boundary property]{Uniform Property $\Gamma$ and the Small Boundary Property}

\thanks{G.\ K.\ and A.\ V.\  were supported by the Deutsche Forschungsgemeinschaft (DFG, German Research Foundation) under Germany’s Excellence Strategy EXC 2044--390685587, Mathematics Münster: Dynamics--Geometry--Structure, through SFB 1442 and ERC Advanced Grant 834267--AMAREC. A.\ T.\ was supported by an NSERC Discovery Grant and by a Visiting Research Fellowship at All Souls College.}

\author[G. Kopsacheilis]{Grigoris Kopsacheilis}
\address{Grigoris Kopsacheilis, Mathematisches Institut, Fachbereich Mathematik und Informatik der
Universit\"at M\"unster, Einsteinstrasse 62, 48149 M\"unster, Germany.}
\email{gkopsach@uni-muenster.de}
\urladdr{https://sites.google.com/view/gkopsach}

\author[H. Liao]{Hung-Chang Liao}
\address{Hung-Chang Liao, Department of Mathematics and Statistics, University of Ottawa, 150 Louis-Pasteur Pvt, Ottawa, ON, Canada K1N 6N5}
\email{hliao@uottawa.ca}

\author[A. Tikuisis]{Aaron Tikuisis}
\address{Aaron Tikuisis, Department of Mathematics and Statistics, University of Ottawa, 150 Louis-Pasteur Pvt, Ottawa, ON, Canada K1N 6N5}
\email{aaron.tikuisis@uottawa.ca}

\author[A. Vaccaro]{Andrea Vaccaro}
\address{Andrea Vaccaro, Mathematisches Institut, Fachbereich Mathematik und Informatik der
Universit\"at M\"unster, Einsteinstrasse 62, 48149 M\"unster, Germany.}
\email{avaccaro@uni-muenster.de}
\urladdr{https://sites.google.com/view/avaccaro}

\subjclass[2020]{46L05 37B05}

\begin{abstract}
We prove that, for a free action $\alpha \colon G \curvearrowright X$ of a countably infinite discrete amenable group on a compact metric space, the small boundary property is implied by uniform property $\Gamma$ of the Cartan subalgebra $(C(X) \subseteq C(X) \rtimes_\alpha G)$. The reverse implication has been demonstrated by Kerr and Szab\'o,
from which we obtain that these two conditions are equivalent. We moreover show that, if $\alpha$ is also minimal, then almost finiteness of $\alpha$ is implied by
tracial $\mathcal{Z}$-stability of the subalgebra $(C(X) \subseteq C(X) \rtimes_\alpha G)$. 
The reverse implication is due to Kerr, resulting in the equivalence of these two properties as well. As an application, we prove that if $\alpha \colon G \curvearrowright X$ and $\beta \colon H \curvearrowright Y$ are free actions and $\alpha$ has the small boundary property, then $\alpha \times \beta \colon G \times H \curvearrowright X \times Y$ has the small boundary property. An analogous permanence property is obtained for almost finiteness in case $\alpha$ and $\beta$ are free minimal actions.
\end{abstract}

\maketitle

\section{Introduction}
The adaptation of the theory surrounding the Toms--Winter conjecture to the framework of topological dynamics, developed by Kerr in \cite{Kerr:JEMS}
and Kerr--Szab\'o in \cite{KerrSzabo}, has revealed a series of correlations between $C^\ast$-algebraic properties that have arose in connection to the Elliott classification program, and specific aspects of regularity within dynamical systems, such as almost finiteness and the small boundary property.
The goal of this paper is to establish a formal correspondence between some of these properties.

The Toms--Winter conjecture is a $C^\ast$-algebraic statement whose primary purpose is to isolate a robust notion of regularity which, alongside nuclearity (and modulo a thorny issue called the Universal Coefficient Theorem), abstractly identifies the family of \emph{classifiable} $C^\ast$-algebras (see \cite{WinterICM, White:survey} for an overview on the topic).
The conjecture compares three properties: \emph{finite nuclear dimension}, \emph{$\mathcal{Z}$-sta\-bi\-li\-ty}, and \emph{strict comparison of positive elements}, and claims that, for simple separable nuclear non-elementary $C^\ast$-algebras, these are equivalent, all being aspects of the same overarching form of regularity
(see \cite[Section 5]{WinterICM} and \cite[Section 5]{CETW:IMRN} for a complete discussion on this conjecture).

After the breakthrough towards the Toms--Winter conjecture made in \cite{CETWW}, a notion that was recognized to be important in this setting is \emph{uniform property $\Gamma$}, inspired by Murray and von Neumann's property $\Gamma$ for II$_1$ factors.
We have in fact the equivalence of the following two conditions, for simple unital separable non-elementary nuclear $C^\ast$-algebras (see \cite{CETW:IMRN}):
\begin{enumerate}[label=(\arabic*)]
\item \label{item:TW1} $\mathcal{Z}$-stability,
\item \label{item:TW2} strict comparison and uniform property $\Gamma$.
\end{enumerate}
Both (uniform) property $\Gamma$ and $\mathcal Z$-stability are central sequence properties: that is, they each express a certain richness in the central sequences.

In \cite{Kerr:JEMS}, Kerr proposed correspondents to the properties considered in the Toms--Winter conjecture within the framework of actions of amenable groups on compact metric spaces.
Of these, \emph{almost finiteness} has been particularly influential.
It is introduced as a topological adaptation of Ornstein--Weiss' formulation of hyperfiniteness for free probability measure-preserving actions in terms of tilings of the space, with a ``tracial smallness'' aspect (see \cite{Kerr:JEMS} for a thorough presentation).
The tracial smallness condition nods to Lin's tracial approximation theory (see \cite{Lin:TAMS,Lin:DMJ,GLN:survey} for example), and more pertinently, ties to a characterization of $\mathcal Z$-stability for \emph{nuclear} $C^*$-algebras enabled by Matui and Sato (called \emph{tracial $\mathcal Z$-stability} by Hirshberg and Orovitz, \cite{HirshbergOrovitz}).
Kerr proved that almost finite free minimal actions of countably infinite amenable groups give rise to $\mathcal{Z}$-stable crossed products, and moreover showed that almost finiteness implies \emph{dynamical comparison}, a notion of comparison for dynamical systems originally introduced by Winter.
In \cite{Kerr:JEMS}, Kerr presents almost finiteness as an analogue of tracial $\mathcal Z$-stability; it is noteworthy that the use of group amenability in arguments (via tiling methods) somewhat muddle where the central sequences truly come from.

Nonetheless, the one-way implication from almost finiteness to $\mathcal Z$-stability, together with conceptual similarities between the notions, naturally raises the question of a converse -- a question that remains open.
Some light on this, however, began to be shed by Kerr and Szab\'o in \cite{KerrSzabo}, containing a detailed analysis of the \emph{small boundary property}: a dynamical formulation of zero-dimensionality introduced  by Shub and Weiss in \cite{ShubWeiss:ETDS} and investigated more in detail in Lindenstrauss and Weiss's study of Gromov's mean dimension \cite{LindenstraussWeiss}.
In \cite[Theorem~6.1]{KerrSzabo} it is demonstrated that the relationship between almost finiteness, dynamical comparison, and the small boundary property mirrors the equivalence between \ref{item:TW1} and \ref{item:TW2} mentioned above; more precisely, the following are equivalent for free actions of amenable groups:
\begin{enumerate}[label=(\Roman*)]
\item almost finiteness,
\item dynamical comparison and the small boundary property.
\end{enumerate}
Moreover, they proved (again using Ornstein--Weiss tiling) that crossed products originating from free actions with the small boundary property have uniform property $\Gamma$, promoting the apparent analogy between these two notions to a formal, yet one-way, relationship.

The main result of this paper shows that this inference is just one facet of a reciprocal relationship. 
It was noted in \cite{KerrSzabo} that the small boundary property implies a stronger condition -- uniform property $\Gamma$ for the Cartan sub-$C^\ast$-algebra $(C(X) \subseteq C(X) \rtimes_\alpha G)$.
Property $\Gamma$ for Cartan subalgebras is not a new concept; indeed, this property was famously used by Connes and Jones to distinguish two Cartan subalgebras, providing the first example of a II$_1$ factor with non-isomorphic Cartan subalgebras (\cite{ConnesJones}).
This was a precursor to much of the deformation/rigidity theory such as \cite{OzawaPopa}, where the opposite phenomenon (unique Cartan subalgebras, even up to conjugation) is proved in a class of II$_1$ factors.

Our main theorem, as follows, establishes a converse to the aforementioned result in \cite{KerrSzabo}, confirming a speculation made there by Kerr and Szab\'o.

\begin{thmIntro}[Theorem \ref{thm:Main2}]
\label{thm:GammaSBP}
Let $X$ be a compact metric space, let $G$ be a countably infinite discrete amenable group and let $\alpha \colon G\curvearrowright X$ be a free action.
The following are equivalent:
\begin{enumerate}
\item \label{item1:main} $\alpha$ has the small boundary property.
\item \label{item2:main} $(C(X)\subseteq C(X)\rtimes_\alpha G)$ has uniform property $\Gamma$.
\end{enumerate}
\end{thmIntro}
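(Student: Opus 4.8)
The plan is to prove only the implication \ref{item2:main}$\,\Rightarrow\,$\ref{item1:main}, since the converse is the theorem of Kerr and Szab\'o recalled in the introduction. Identify the trace simplex of $C(X)\rtimes_\alpha G$ with the space $M_G(X)$ of $\alpha$-invariant Borel probability measures on $X$, and write $\|a\|_2:=\sup_{\mu\in M_G(X)}\big(\int|a|^2\,d\mu\big)^{1/2}$ for the associated uniform $2$-seminorm on $C(X)$. The first step is to unwind uniform property $\Gamma$ of the pair into an elementary statement about $C(X)$: for every $n\in\N$, every finite $S\subseteq G$ and every $\delta>0$ there are positive contractions $f_1,\dots,f_n\in C(X)$ with $\|\unit-\sum_i f_i\|_2<\delta$, $\|f_if_j\|_2<\delta$ for $i\neq j$, $\|f_i-f_i^2\|_2<\delta$, $\|f_i-f_i\circ\alpha_g\|_2<\delta$ for all $g\in S$, and $\big|\int f_i\,d\mu-\tfrac1n\big|<\delta$ for every $\mu\in M_G(X)$. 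One obtains this by fixing a representing sequence in $C(X)$ for a family of projections $p_1,\dots,p_n$ with $\sum_i p_i=\unit$ and $\tau(p_i)=\tfrac1n$ witnessing uniform property $\Gamma$ inside the relative commutant $C(X)_\omega\cap(C(X)\rtimes_\alpha G)'$ of the uniform tracial ultrapower: commutation with $C(X)$ is automatic by commutativity, commutation with the canonical unitaries $u_g$ unwinds to asymptotic $\alpha_g$-invariance in $\|\cdot\|_2$, and---crucially for consistency---this invariance is only demanded for the prescribed \emph{finite} set $S$, not along a F\o{}lner exhaustion.

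The second step is to round the $f_i$ to open sets. Setting $V_i:=\{f_i>1/2\}$ and using $|t-\chi_{(1/2,1]}(t)|^2\le 4\,t(1-t)$ on $[0,1]$ together with the preceding estimates, one gets $\|\unit_{V_i}-f_i\|_2\le 2\sqrt\delta$ and $\sup_\mu\mu(\partial V_i)\le\sup_\mu\mu\big(\{1/4\le f_i\le 3/4\}\big)=O(\delta)$, and hence that the $V_i$ form an approximate Borel partition of $X$ whose pieces have uniform measure $\approx 1/n$ and are approximately $\alpha_g$-invariant ($g\in S$) in measure, all uniformly over $M_G(X)$. Thus uniform property $\Gamma$ of the pair supplies, for \emph{every} triple $(n,S,\delta)$, a finite family of open sets whose boundaries are arbitrarily small (uniformly over $M_G(X)$), which is approximately $S$-invariant, and which divides the mass of every invariant measure into $n$ nearly equal parts.

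For the target side I would use the standard reformulations: $\alpha$ has the small boundary property if and only if for every closed $C\subseteq X$ and open $U\supseteq C$ there is an open $V$ with $C\subseteq V\subseteq U$ and $\mu(\partial V)=0$ for all $\mu\in M_G(X)$; equivalently, there is an $F_\sigma$ set $Z$ with $\mu(Z)=0$ for all $\mu\in M_G(X)$ such that $X\setminus Z$ is zero-dimensional; equivalently, for every $\e>0$ there is a finite Borel partition of $X$ of mesh $<\e$ all of whose atoms have boundary that is null for every $\mu\in M_G(X)$. The strategy is to produce such data from the families of the second step. A natural intermediate target is the \emph{approximate} statement ``for all $\e,\delta>0$ there is a finite Borel partition of mesh $<\e$ with total boundary of $\sup_\mu\mu$-measure $<\delta$'', but I want to flag at the outset that the passage from $\delta$-small boundaries to genuinely null boundaries is not purely point-set topological (a common refinement can only enlarge a boundary set, and closures of countable unions of small closed sets need not be small), so the dynamics will have to be used in an essential way for this upgrade.

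The heart of the argument, and the step I expect to be the main obstacle, is therefore twofold. First, manufacturing fine partitions with small total boundary from the $V_i$: these are forced to be \emph{non-localized} (a Borel set of uniform measure $1/n$ across \emph{all} invariant measures cannot be concentrated near a point), so one cannot merely intersect them with a prescribed fine cover; the plan is to refine dynamically, passing to $\bigvee_{g\in T}\alpha_g\{V_1,\dots,V_n\}$ for a large finite $T\subseteq G$, whose total boundary lies in $\bigcup_{g\in T}\alpha_g\big(\bigcup_i\partial V_i\big)$ and hence, by invariance of the measures, still has $\sup_\mu\mu$-measure $O(|T|\,n\,\delta)$, and then using freeness of $\alpha$ together with a good choice of $T$ to force the refinement to genuinely shrink in mesh; combining this with the Ornstein--Weiss tiling machinery (whose castles, for the free amenable action $\alpha$, supply the ``shapes'' $T$ that make the refinements both fine and measure-controlled) and diagonalizing over $(n,S,T,\delta)$ with $\delta\to 0$ fast should give, for each $\e$, a fine partition of arbitrarily small total-boundary measure. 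Second, the upgrade to null boundaries: here one must arrange the boundary sets produced across the various scales so that they accumulate into a single $F_\sigma$ set that is null for every invariant measure while leaving behind a zero-dimensional complement; carrying this out---controlling all the closures involved and exploiting that the ``$S$-invariance'' of the $V_i$ lets one play translates against one another---is, I expect, the technical core of the proof.
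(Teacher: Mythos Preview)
Your outline diverges from the paper's route and, as written, has a genuine gap at the step you yourself flag as the main obstacle.

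\medskip

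\textbf{Where the gap is.} Steps 1 and 2 are fine: uniform property $\Gamma$ of the pair does yield, for each $(n,S,\delta)$, open sets $V_1,\dots,V_n$ with $\sup_\mu\mu(\partial V_i)$ small, $\mu(V_i)\approx 1/n$, and approximate $S$-invariance in measure. The problem is your mechanism for producing \emph{small-diameter} sets in step 3. The only metric leverage you propose is the dynamical refinement $\bigvee_{g\in T}\alpha_g\{V_1,\dots,V_n\}$, together with freeness and Ornstein--Weiss tilings. But the approximate $S$-invariance of the $V_i$ works directly \emph{against} this: if $T\subseteq S$ then $\alpha_g V_i$ differs from $V_i$ by a set of measure $<\delta$, so up to a set of measure $O(|T|\,n\,\delta)$ the join coincides with $\{V_1,\dots,V_n\}$ itself and gains nothing in mesh. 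If instead you take $T\not\subseteq S$, you have no information whatsoever about $\alpha_g V_i$ for $g\notin S$, and there is simply no reason for the atoms of the join to be small---freeness only tells you orbits are infinite, not that the particular sets handed to you by property~$\Gamma$ separate nearby points under translation. In short, the property~$\Gamma$ data is purely measure-theoretic (equidistribution, approximate invariance), and your plan contains no device for converting it into metric information (small diameters). Invoking Ornstein--Weiss tilings does not help here: those produce castles with F\o{}lner shapes and small remainder, but the bases are unrelated to your $V_i$ and carry no diameter control either.

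\medskip

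\textbf{What the paper does instead.} The paper never tries to extract small-diameter sets directly from the $\Gamma$-projections. It introduces \emph{complemented partitions of unity} (CPoU) for the pair and proves:
(a) the crossed-product pair always has \emph{weak} CPoU, by a F\o{}lner averaging argument;
(b) weak CPoU together with uniform property~$\Gamma$ upgrades to CPoU (a projectionization trick);
(c) CPoU implies the small boundary property.
Step (c) is the substitute for your step 3, and it works because the small-diameter data is supplied \emph{externally}: for each fixed $\tau\in T(A)$ one trivially has, by inner regularity, a finite family of pairwise disjoint open sets of diameter $<\delta$ whose union has $\mu_\tau$-measure $>1-\e$. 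Compactness of $T(A)$ gives finitely many such families, one per $\tau_i$, and CPoU is exactly the local-to-global tool that glues them into a single family that works uniformly over all invariant measures. This is then fed into the Kerr--Szab\'o characterization of the small boundary property in the form ``for all $\delta,\e>0$ there exist pairwise disjoint open sets of diameter $\le\delta$ with $\mu(\bigcup)>1-\e$ for every $\mu$'' (Proposition~\ref{prop:sbp}\ref{sbpitem3}); that reformulation already contains the upgrade from $\e>0$ to null boundaries that you worry about in your step 4, so that concern is not where the difficulty lies.
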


One important consequence of Theorem \ref{thm:GammaSBP} is that it can be applied, in combination with the main result of \cite{LiaoTikuisis}, to establish a rigorous connection
between almost finiteness and $\mathcal{Z}$-stability. We show in fact that the $C^\ast$-algebraic equivalent of almost finiteness is \emph{tracial $\mathcal{Z}$-stability} for sub-$C^\ast$-algebras (Definition \ref{def:trZstab}), introduced in \cite{LiaoTikuisis} and based on Hirshberg--Orovitz's tracial $\mathcal Z$-stability for $C^*$-algebras.

\begin{thmIntro}[Corollary \ref{cor:AlmostFinite}]\label{thm:AlmostFinite}
Let $X$ be a compact metric space, let $G$ be a countably infinite discrete amenable group and let $\alpha \colon G \curvearrowright X$ be a free minimal action.
The following are equivalent:
\begin{enumerate}
\item\label{it:AlmostFinite.1intro}
 The action $\alpha$ is almost finite.
\item\label{it:AlmostFinite.2intro}
$(C(X)\subseteq C(X)\rtimes_\alpha G)$ is tracially $\mathcal Z$-stable.
\item\label{it:AlmostFinite.3intro}
$(C(X)\subseteq C(X)\rtimes_\alpha G)$ has uniform property $\Gamma$ and $\alpha$ has dynamical comparison.
\item\label{it:AlmostFinite.4intro}
$\alpha$ has the small boundary property and dynamical comparison.
\end{enumerate}
\end{thmIntro}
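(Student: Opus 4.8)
The strategy is to assemble the statement from three equivalences and then chase the resulting diagram; no genuinely new argument is needed beyond Theorem~\ref{thm:GammaSBP} and the cited results. The equivalence \ref{it:AlmostFinite.1intro}$\Leftrightarrow$\ref{it:AlmostFinite.4intro} is \cite[Theorem~6.1]{KerrSzabo}. For \ref{it:AlmostFinite.3intro}$\Leftrightarrow$\ref{it:AlmostFinite.4intro}, Theorem~\ref{thm:GammaSBP} asserts that $\alpha$ has the small boundary property exactly when $(C(X)\subseteq C(X)\rtimes_\alpha G)$ has uniform property~$\Gamma$, and since dynamical comparison appears verbatim in both \ref{it:AlmostFinite.3intro} and \ref{it:AlmostFinite.4intro}, the equivalence is immediate. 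It remains to connect \ref{it:AlmostFinite.2intro} to the others. The implication \ref{it:AlmostFinite.1intro}$\Rightarrow$\ref{it:AlmostFinite.2intro} is due to Kerr: performing the almost-finiteness-to-$\mathcal{Z}$-stability construction of \cite{Kerr:JEMS} relative to the Cartan subalgebra $C(X)$ produces the order-zero maps from matrix algebras into the relative central sequence algebra that witness tracial $\mathcal{Z}$-stability of the pair. The remaining, substantive direction is \ref{it:AlmostFinite.2intro}$\Rightarrow$\ref{it:AlmostFinite.1intro}, for which the main input is the characterisation of tracial $\mathcal{Z}$-stability of an inclusion in \cite{LiaoTikuisis}.

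To invoke \cite{LiaoTikuisis} one first checks that its standing hypotheses are satisfied by $(C(X)\subseteq C(X)\rtimes_\alpha G)$, which is routine: as $G$ is infinite and $\alpha$ is free, $X$ is infinite; freeness makes $C(X)$ a Cartan subalgebra; minimality makes $C(X)\rtimes_\alpha G$ simple; amenability makes it nuclear and furnishes an $\alpha$-invariant Borel probability measure, hence a tracial state, so that the algebra, being simple with a trace, is stably finite; and being unital with $X$ infinite it is non-elementary. With this in hand, \cite{LiaoTikuisis} gives that tracial $\mathcal{Z}$-stability of the pair is equivalent to the conjunction of uniform property~$\Gamma$ for the pair and an appropriate comparison property of the pair. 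Assuming \ref{it:AlmostFinite.2intro}, Theorem~\ref{thm:GammaSBP} turns the first ingredient into the small boundary property, so that we are left with the small boundary property together with the pair-comparison property; it then suffices to see that, for free minimal $\alpha$, the pair-comparison property implies dynamical comparison, for then \ref{it:AlmostFinite.4intro} holds and \cite[Theorem~6.1]{KerrSzabo} yields \ref{it:AlmostFinite.1intro}.

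I expect the translation between comparison of the Cartan pair and dynamical comparison to be the main obstacle. In the minimal case dynamical comparison reduces to subequivalence of open subsets of $X$; such open sets correspond to positive elements of $C(X)$ through their characteristic-function approximants, while the $\alpha$-invariant Borel probability measures correspond to the tracial states of $C(X)\rtimes_\alpha G$. Evaluated on these positive elements of $C(X)$, the comparison property of the pair then reads exactly as dynamical comparison, the passage between open sets and functions being handled by outer regularity of invariant measures. The reverse translation --- dynamical comparison $\Rightarrow$ comparison of the pair, which would be required if instead one deduced \ref{it:AlmostFinite.2intro} from \ref{it:AlmostFinite.3intro} directly --- is the more delicate one, since it upgrades a statement about group translates of open sets to Cuntz comparison of arbitrary positive elements of the crossed product; it can be obtained from the analysis of dynamical subequivalence in \cite{KerrSzabo} (or, once the small boundary property is available, via almost finiteness and $\mathcal{Z}$-stability of the crossed product), but either way it is the alignment of the various comparison notions occurring across \cite{Kerr:JEMS}, \cite{KerrSzabo} and \cite{LiaoTikuisis} on which the argument turns.
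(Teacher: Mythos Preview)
Your cycle through the four conditions matches the paper's, and the uses of \cite[Theorem~6.1]{KerrSzabo} and of Theorem~\ref{thm:GammaSBP} for \ref{it:AlmostFinite.3intro}$\Leftrightarrow$\ref{it:AlmostFinite.4intro} are exactly as in the paper. The divergence is in how you handle \ref{it:AlmostFinite.2intro}$\Rightarrow$\ref{it:AlmostFinite.3intro}. You posit that \cite{LiaoTikuisis} furnishes an equivalence ``tracial $\mathcal Z$-stability $\Leftrightarrow$ uniform property~$\Gamma$ $+$ pair-comparison'' and then spend your effort worrying about translating pair-comparison into dynamical comparison. The paper does not use \cite{LiaoTikuisis} this way: it cites \cite[Theorem~A (ii)$\Rightarrow$(iii)]{LiaoTikuisis} to obtain dynamical comparison \emph{directly} from tracial $\mathcal Z$-stability, so your anticipated obstacle simply does not arise.

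What the paper supplies, and what your proposal is missing, is a self-contained argument that tracial $\mathcal Z$-stability of $(C(X)\subseteq C(X)\rtimes_\alpha G)$ implies uniform property~$\Gamma$ of the same pair. This is the only piece with real content in the paper's proof of the corollary. The idea is short: given $k$, use freeness and infiniteness of $G$ to find nonzero $s_n\in C(X)_+$ whose open supports shrink uniformly in all invariant measures; feed these into the definition of tracial $\mathcal Z$-stability to get c.p.c.\ order zero maps $\varphi_n\colon M_k\to C(X)\rtimes_\alpha G$ with $\varphi_n(e_{ii})\in C(X)$, with $1-\varphi_n(1)\preceq_{(D\subseteq A)} s_n$, and with the induced map $\varphi\colon M_k\to A^\cU$ landing in the commutant of $A$. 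The Cuntz-type bound forces $\varphi(1)=1$ in $A^\cU$, so $\varphi$ is a unital $^*$-homomorphism, and the images $\varphi(e_{ii})\in\kappa(C(X)^\cU)\cap A'$ are the required projections; the trace identity $\tau(\varphi(e_{ii})a)=\tfrac1k\tau(a)$ follows from uniqueness of the trace on $M_k$ exactly as in \cite[Proposition~2.3]{CETWW}. Once you insert this argument, your proposal and the paper's proof coincide, and your last paragraph (about aligning comparison notions) can be dropped.
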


The main proof of this paper is that of \ref{item2:main} $\Rightarrow$ \ref{item1:main} in Theorem \ref{thm:GammaSBP}.
It consists of several steps and, as a byproduct that is interesting in its own right, it factors through an adaptation of the  $C^\ast$-algebraic concept of \emph{complemented partitions of unity} (typically abbreviated as \emph{CPoU}) to sub-$C^\ast$-algebras (see Definition \ref{def:CPoU}).
CPoU first appeared in \cite{CETWW}, and they have served since as a device for certain local-to-global arguments (see \cite[Section~1.4]{CCEGSTW} for an exposition).

The first part of our proof holds in greater generality, and it applies to sub-$C^\ast$-algebras $(D \subseteq A)$ which are either Cartan pairs, or are of the form $(C(X) \subseteq C(X)\rtimes_\alpha G)$, where $\alpha \colon G \curvearrowright X$ is an action of a countable discrete amenable group on a compact metric space $X$. Note that both these settings cover the case of $(C(X)\subseteq C(X)\rtimes_\alpha G)$ where $\alpha \colon G \curvearrowright X$ is a free action of an amenable group.
Under either of these hypotheses, we show that if $(D \subseteq A)$ has uniform property $\Gamma$, then it has CPoU (see Corollary \ref{cor:GammaCPoU}), in analogy to what happens in the classical setting (\cite[Theorem~3.8]{CETWW}).
The proof splits in two steps: we first establish \emph{weak CPoU}, a weakening of CPoU where the partitions of unity are allowed to be positive, non-orthogonal elements (as opposed to orthogonal projections; see Definition \ref{def:weakCPoU}). This is done with two independent proofs for the two cases, the first one relying on averaging argument using F{\o}lner sets (Theorem~\ref{thm:MainWeakCPoU}), while the second uses the canonical conditional expectation $E \colon A \to D$ to push partitions of unity from $A$ onto $D$ (Theorem~\ref{thm:weak-cpou-cartan-pairs}).
After this, a routine \emph{projectionization} trick, enabled by uniform  property $\Gamma$, permits to upgrade this weaker version of CPoU to actual CPoU (Proposition \ref{prop:weakCPoUplusGamma}).

Once this is done, the aforementioned local-to-global argument powered by CPoU can be used to verify a formulation of the small boundary property due to Kerr and Szab\'o, in terms of disjoint open almost-covers (covering the space except a piece that is uniformly small with respect to all invariant measures) by small open sets.

During the preparation of this paper we learned that Elliott and Niu independently proved the implication \ref{item2:main} $\Rightarrow$ \ref{item1:main} in Theorem \ref{thm:GammaSBP} (\cite{ElliottNiu}).
Whereas they use a more direct topological approach, the connection to CPoU in our argument may be of independent interest, for example in further investigations into Cartan subalgebras.

We conclude with some applications of our main results. The first one exploits the stability of uniform property $\Gamma$ and of tracial $\mathcal{Z}$-stability with respect to the tensor product operation. Thanks to the equivalences in Theorems \ref{thm:GammaSBP} and \ref{thm:AlmostFinite}, we can derive similar patterns for the small boundary property and almost finiteness.

\begin{corIntro}[Corollary~\ref{cor:Gamma_product}, Corollary~\ref{cor:AlmostFinite_product}]
\label{cor:SBPproducts}
Let $X$ and $Y$ be compact metric spaces, let $G$ and $H$ be countably infinite discrete amenable groups and let $\alpha\colon G \curvearrowright X$ and $\beta\colon G\curvearrowright Y$ be free actions.
\begin{enumerate}
\item \label{item1:corollary} If the action $\alpha$ has the small boundary property, then so does the action $\alpha\times\beta \colon(G\times H)\curvearrowright (X\times Y)$.
\item If the actions $\alpha$ and $\beta$ are moreover minimal and $\alpha$ is almost finite, then $\alpha\times\beta \colon(G\times H)\curvearrowright (X\times Y)$ is almost finite.
\end{enumerate}
\end{corIntro}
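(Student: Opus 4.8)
The plan is to pass to the $C^*$-algebraic side, use the equivalences of Theorems~\ref{thm:GammaSBP} and~\ref{thm:AlmostFinite} to reformulate the two assertions via a tensor-product construction, exploit that the relevant $C^*$-properties are tensorial, and then pass back. I would begin by recording the routine identifications: $C(X\times Y)\rtimes_{\alpha\times\beta}(G\times H)$ is canonically isomorphic to $(C(X)\rtimes_\alpha G)\otimes(C(Y)\rtimes_\beta H)$, and this isomorphism carries $C(X\times Y)=C(X)\otimes C(Y)$ onto $C(X)\otimes C(Y)\subseteq (C(X)\rtimes_\alpha G)\otimes(C(Y)\rtimes_\beta H)$; thus the Cartan pair of $\alpha\times\beta$ is the tensor product of the Cartan pairs of $\alpha$ and $\beta$. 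Moreover $G\times H$ is again a countably infinite discrete amenable group, and $\alpha\times\beta$ is free whenever $\alpha$ and $\beta$ are, since an element $(g,h)$ fixing $(x,y)$ satisfies $gx=x$ and $hy=y$, forcing $g=e$ and $h=e$. For part~(i): if $\alpha$ has the small boundary property, then the Kerr--Szab\'o implication \ref{item1:main}$\Rightarrow$\ref{item2:main} of Theorem~\ref{thm:GammaSBP} gives that $(C(X)\subseteq C(X)\rtimes_\alpha G)$ has uniform property~$\Gamma$; since uniform property~$\Gamma$ of such a pair is inherited by tensor products when one factor has it, the Cartan pair of $\alpha\times\beta$ has uniform property~$\Gamma$; and the implication \ref{item2:main}$\Rightarrow$\ref{item1:main} of Theorem~\ref{thm:GammaSBP} then yields that $\alpha\times\beta$ has the small boundary property.

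The tensorial behaviour of uniform property~$\Gamma$ is the engine, and I would verify it directly in the ultrapower. Given orthogonal projections $p_1,\dots,p_n\in(C(X)\rtimes_\alpha G)^\omega\cap C(X)'$ with $\sum_i p_i=1$ and $\tau(p_ia)=\tfrac1n\tau(a)$ for every $a$ and every limit trace, the elements $p_i\otimes 1$ in $\big((C(X)\rtimes_\alpha G)\otimes(C(Y)\rtimes_\beta H)\big)^\omega$ are again orthogonal projections summing to $1$ and commuting with $C(X)\otimes C(Y)$, and the only point is the trace condition: for a trace $\tau$ on the tensor product and a fixed $b$, the functional $a\mapsto\tau(a\otimes b)$ is a tracial functional on $C(X)\rtimes_\alpha G$, so $\tau(p_ia)=\tfrac1n\tau(a)$ for all traces (hence, by linearity, for all tracial functionals) gives $\tau\big((p_i\otimes 1)(a\otimes b)\big)=\tfrac1n\tau(a\otimes b)$, and approximating by sums of elementary tensors finishes it. The analogous statement for tracial $\mathcal{Z}$-stability of these pairs is handled in the same spirit; in the minimal case one can alternatively reduce to genuine $\mathcal{Z}$-stability of the (simple, nuclear) crossed products, which is manifestly tensorial, together with the fact that $\mathcal{Z}$-stability of $A$ in the presence of a Cartan subalgebra $D$ forces tracial $\mathcal{Z}$-stability of $(D\subseteq A)$. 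I expect this to be the most delicate point: the Cuntz-comparison clause in the definition of tracial $\mathcal{Z}$-stability must be supplied with a suitable nonzero positive element of $C(X)\otimes C(Y)$, and the order-zero map has to be arranged so as not to waste the second tensor leg.

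For part~(ii), assume additionally that $\alpha$ and $\beta$ are minimal and that $\alpha$ is almost finite. Then $\alpha\times\beta$ is free, as above, and minimal: the orbit closure of $(x,y)$ under $G\times H$ equals $\overline{Gx}\times\overline{Hy}=X\times Y$. By the implication \ref{it:AlmostFinite.1intro}$\Rightarrow$\ref{it:AlmostFinite.2intro} of Theorem~\ref{thm:AlmostFinite}, the pair $(C(X)\subseteq C(X)\rtimes_\alpha G)$ is tracially $\mathcal{Z}$-stable; by the tensor permanence discussed above, the Cartan pair of $\alpha\times\beta$ is tracially $\mathcal{Z}$-stable; and by the implication \ref{it:AlmostFinite.2intro}$\Rightarrow$\ref{it:AlmostFinite.1intro} of Theorem~\ref{thm:AlmostFinite} (which runs through \ref{item2:main}$\Rightarrow$\ref{item1:main} of Theorem~\ref{thm:GammaSBP} and \cite{LiaoTikuisis}), the action $\alpha\times\beta$ is almost finite. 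Routing part~(ii) through tracial $\mathcal{Z}$-stability rather than through condition \ref{it:AlmostFinite.4intro} of Theorem~\ref{thm:AlmostFinite} is deliberate, as it sidesteps the need for a permanence result for dynamical comparison under products.
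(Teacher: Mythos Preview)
Your approach to part~(i) is essentially the paper's (Proposition~\ref{prop:Gamma_tensor} and Corollary~\ref{cor:Gamma_product}), modulo a slip: the projections $p_i$ must lie in $\kappa(C(X)^\cU)$ and commute with all of $C(X)\rtimes_\alpha G$, not merely with $C(X)$ (see Definition~\ref{def:Gamma}); with that correction, $p_i\otimes 1$ do witness uniform property~$\Gamma$ for the tensor pair, and the rest goes through.

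For part~(ii) your strategy matches the paper's (Corollary~\ref{cor:AlmostFinite_product}), but the step you flag as ``delicate'' is a genuine gap, not something handled ``in the same spirit'' as property~$\Gamma$. After reducing via Kirchberg's slice lemma to $s=s_A\otimes s_B$ and setting $\varphi=\psi\otimes 1_B$, one has $1_{A\otimes B}-\varphi(1_{M_n})=(1_A-\psi(1_{M_n}))\otimes 1_B$, so what is needed is a nonzero $t_A\in (D_A)_+$ with $t_A\otimes 1_B\preceq_{(D_A\otimes D_B\subseteq A\otimes B)} s_A\otimes s_B$; one then feeds $t_A$ (not $s_A$) into tracial $\mathcal Z$-stability of $(D_A\subseteq A)$. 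The paper produces such a $t_A$ by a two-step argument that uses minimality of \emph{both} actions: minimality of $\beta$ gives $1_B\otimes e_{11}\preceq s_B\otimes 1_{M_m}$ in $(D_B\otimes D_m\subseteq B\otimes M_m)$ for some $m$, and minimality of $\alpha$ (plus infiniteness of $X$) gives a nonzero $t_A$ with $t_A\otimes 1_{M_m}\preceq s_A\otimes e_{11}$; combining these yields the desired subequivalence. This is where the hypothesis that $\beta$ be minimal actually enters, and it is not visible from the property~$\Gamma$ argument. Your proposed alternative route does not work as stated: the claim that ``$\mathcal Z$-stability of $A$ in the presence of a Cartan subalgebra $D$ forces tracial $\mathcal Z$-stability of $(D\subseteq A)$'' is not established anywhere in the paper, and via Corollary~\ref{cor:AlmostFinite} it would imply that $\mathcal Z$-stability of the crossed product forces almost finiteness of the action --- precisely the open converse discussed in the introduction.
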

While this paper was under review, results similar to that in \ref{item1:corollary} of the corollary above were obtained by Kerr and Li in the context of actions of not necessarily amenable groups (\cite{KerrLi}).

Finally, we note that our main result, combined with appropriate dynamical examples, yields sub-$C^\ast$-algebras without uniform property $\Gamma$; in their paper introducing the small boundary property, Lindenstrauss and Weiss showed that it implies mean dimension zero (\cite[Theorem~5.4]{LindenstraussWeiss}; the proof is for $\mathbb Z$-actions but generalizes to arbitrary amenable groups).

\begin{corIntro} \label{cor:noGamma}
Let $\alpha \colon G \curvearrowright X$ be a free action of an amenable group with positive mean dimension.
Then the sub-$C^\ast$-algebra $(C(X)\subseteq C(X)\rtimes_\alpha G)$ does not have uniform property $\Gamma$.
\end{corIntro}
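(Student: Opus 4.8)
The plan is to argue by contraposition: if $(C(X)\subseteq C(X)\rtimes_\alpha G)$ has uniform property $\Gamma$, I would deduce that $\alpha$ has mean dimension zero. First I would invoke Theorem~\ref{thm:GammaSBP}: for a free action of a countably infinite discrete amenable group on a compact metric space, uniform property $\Gamma$ of the Cartan pair is equivalent to the small boundary property, so $\alpha$ has the small boundary property. (Here ``amenable group'' is read as ``countably infinite discrete amenable group'', matching the hypotheses of Theorem~\ref{thm:GammaSBP}.)

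Next I would feed the small boundary property into the result of Lindenstrauss and Weiss from \cite{LindenstraussWeiss} that the small boundary property implies mean dimension zero. The statement \cite[Theorem~5.4]{LindenstraussWeiss} is phrased for $\mathbb Z$-actions, so the one genuine point to verify is that its proof carries over to an arbitrary countable amenable $G$: one replaces the intervals $\{0,\dots,n-1\}$ by a F{\o}lner sequence and uses the Ornstein--Weiss subadditivity and quasi-tiling machinery in place of the one-dimensional averaging, exactly as when transporting other mean-dimension arguments from $\mathbb Z$ to general amenable groups (this generalization is already noted in the introduction). Having established that $\alpha$ has mean dimension zero, we contradict the hypothesis of positive mean dimension, and the corollary follows.

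Beyond this amenable-group version of \cite[Theorem~5.4]{LindenstraussWeiss} there is essentially no obstacle: the remainder is the citation chain ``uniform property $\Gamma$ $\Rightarrow$ small boundary property $\Rightarrow$ mean dimension zero''. If one prefers not to reproduce that generalization, an equivalent route is to cite a reference in which ``small boundary property $\Rightarrow$ mean dimension zero'' is recorded for actions of countable amenable groups, after which the corollary becomes a one-line consequence of Theorem~\ref{thm:GammaSBP}.
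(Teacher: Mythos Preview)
Your proposal is correct and matches the paper's own argument: the paper likewise deduces the corollary directly from Theorem~\ref{thm:GammaSBP} together with the Lindenstrauss--Weiss result that the small boundary property implies mean dimension zero, noting (as you do) that the latter generalizes from $\mathbb Z$-actions to arbitrary amenable groups.
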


In particular, the free minimal actions $\alpha \colon \mathbb Z \curvearrowright X$ constructed in \cite[Section~2]{GiolKerr} (which give examples of non-$\mathcal Z$-stable $C^*$-algebras arising as $\mathbb Z$-crossed products) are tailored to have positive mean dimension, so we find that $(C(X)\subseteq C(X)\rtimes_\alpha \mathbb Z)$ does not uniform property $\Gamma$.
Corollary \ref{cor:noGamma} does not answer the question of whether the $C^*$-algebra $C(X)\rtimes_\alpha \mathbb Z$ itself has uniform property $\Gamma$.

\subsection*{Summary of the paper} Section \ref{sec:Prelim} is devoted to preliminaries. Section \ref{sec:GammaSBP} focuses on the small boundary property and uniform property $\Gamma$ for sub-$C^\ast$-algebras. In Section \ref{sec:CPoU} we introduce CPoU for sub-$C^\ast$-algebras, and we show that if $(C(X)\subseteq C(X)\rtimes_\alpha G)$ is a sub-$C^\ast$-algebra with CPoU and originating from a free action $\alpha \colon G \curvearrowright X$, then $\alpha$ has the small boundary property.
Finally in Section \ref{sec:Main} we put all ingredients together and prove Theorems \ref{thm:GammaSBP} and \ref{thm:AlmostFinite}, as well as Corollary \ref{cor:SBPproducts}.

\subsection*{Acknowledgements} We are grateful to David Kerr, Stuart White and Wilhelm Winter for helpful discussions and comments. We thank moreover the anonymous referee for their careful feedback revealing some imprecisions in an earlier version of the paper.
\section{Preliminaries}
\label{sec:Prelim}

This brief preliminary section is devoted to notation and definitions.

For a $C^\ast$-algebra $A$, we write $A_+$ for the set of positive elements in $A$, and we write $A_+^1$ for the set of positive contractions in $A$. The \emph{trace space} $T(A)$ is the set of all
tracial states on $A$, which we simply refer to as \emph{traces}. For $a,b \in A$, we write $[a,b]$ for the commutator $ab - ba$.

Given a nonempty subset $X\subseteq T(A)$, we define the seminorm $\|\cdot\|_{2,X}$ on $A$ by
\begin{equation}   \|a\|_{2,X} := \sup_{\tau\in X} \tau(a^*a)^{ \frac{1}{2} }.  \end{equation} 
Throughout the paper, $\mathcal U$ denotes a (fixed) free ultrafilter on $\N$. 
The \emph{tracial ultrapower of $A$ with respect to $X$} is the quotient
\begin{equation} 
(A,X)^\cU := \ell^\infty(A) /\{ (a_n)_{n=1}^\infty \in \ell^\infty(A): \lim_{n\to \cU} \| a_n\|_{2,X} = 0  \}.
\end{equation} 
When $X = T(A)$ we recover the usual \emph{uniform tracial power} $(A,T(A))^\cU$, abbreviated as $A^\cU$. We recall that, in the case that $A$ is separable, the tracial ultrapower $A^\cU$ is unital if and only if $T(A)$ is compact. In this case, a representative sequence for $1_{A^\cU}$ can be obtained by taking any countable approximate unit of $A$ (see  \cite[Proposition~1.11]{CETWW}).

A tracial state on $A^\cU$ that is induced by a sequence $(\tau_n)_{n=1}^\infty$ of tracial states on $A$ is called a \emph{limit trace}. Following \cite[\S 1.4]{CETWW} we write $T_\cU(A)$ for the set of limit traces on $A^\cU$.

	A \emph{sub-$C^\ast$-algebra} $(D\subseteq A)$ is a $C^\ast$-algebra $A$ together with a nonzero $C^\ast$-subalgebra $D$. We say that a sub-$C^\ast$-algebra $(D\subseteq A)$ is \emph{nondegenerate} if $D$ contains an approximate unit for $A$, and it is \emph{unital} if $A$ is unital and $1_A \in D$. Every tracial state on $A$ restricts to a positive tracial functional on $D$. We write $T(A)|_D \subseteq T(D)$ for the set of tracial states on $D$ obtained from restricting and scaling elements of $T(A)$ (if the sub-$C^\ast$-algebra $(D\subseteq A)$ is nondegenerate then no scaling is needed). 
We let 
\begin{equation} \label{eq:kappa}
\kappa \colon 
D^\cU \to A^\cU
\end{equation}
denote the $^\ast$-homomorphism sending the class of $(d_n)_{n=1}^\infty \in \ell^\infty(D)$ to the class of the same sequence in $A^\cU$.\footnote{This map is well-defined since
$\| d \|_{2, T(A)} \le \| d \|_{2, T(D)}$ for all $d \in D$, and therefore a similar inequality is true for limit traces.}
The image $\kappa(D^\cU)\subseteq A^\cU$ consists of all elements in $A^\cU$ that are representable by sequences in $\ell^\infty(D)$. Note that in case $T(A)$ is compact and $(D \subseteq A)$ is nondegenerate, the unit $1_{A^\cU}$ of $A^\cU$ lies in the image of $\kappa$.

Given two subsets $B,S \subseteq (A,X)^\cU$ we let $B \cap S'$ denote the relative commutant of $S$ in $B$.
While in general the map sending each $a \in A$ to the corresponding constant sequence in $(A,X)^\cU$ is not an embedding (this is the case only if $\| \cdot \|_{2, X}$ is a norm), we still write $a$ for the corresponding constant sequence in $(A, X)^\cU$, and denote by $B \cap A'$ the relative commutant of the set of all constant sequences.

For an action $\alpha \colon G \curvearrowright A$ on a $C^\ast$-algebra $A$ with nonempty trace space, we let $\alpha^\cU$ denote the action induced on $A^\cU$, which acts coordinate-wise as $\alpha$ on each representing sequence.

For a continuous action $G \curvearrowright X$ on a topological space $X$, we let $M_G(X)$ denote the set of $G$-invariant Borel probability measures on $X$.

\section{The small boundary property and uniform property \texorpdfstring{$\Gamma$}{Gamma}} \label{sec:GammaSBP}
In this section we recall the definition of the small boundary property and introduce uniform property $\Gamma$ for sub-$C^\ast$-algebras. We also collect some preliminary statements about these two notions.

\subsection{The small boundary property}

The small boundary property is generally considered in the context of group actions on compact metrizable spaces. Since some of our results concern more general sub-$C^\ast$-algebras than those arising from crossed products, we formulate a definition of the small boundary property that does not rely on the existence of an action, but that only depends on a given set of measures of the space. A similar notion is also considered in
\cite{ElliottNiu}.
\begin{defn}\label{def:sbp-general}
Let $X$ be a compact Hausdorff space and let $K$ be a weak$^\ast$ compact set of Borel probability measures on $X$. We say that $X$ has the $K$-\textit{small boundary property} if the collection
\begin{equation}  
\left\{U\subseteq X\colon U\text{ is open and }\mu(\partial U)=0, \, \forall \mu \in K\right\} 
\end{equation} 
is a basis for the topology on $X$.

\end{defn}

For a continuous action $\alpha \colon G \curvearrowright X$ of a discrete group on a compact Hausdorff space, the action has the \textit{small boundary property} (as in \cite[Definition~5.1]{KerrSzabo}) if $X$ has the $M_G(X)$-small boundary property. We give an analogous definition for sub-$C^\ast$-algebras.

\begin{defn}
Let $(D\subseteq A)$ be a unital sub-$C^\ast$-algebra with $D$ abelian and such that $A$ has nonempty trace space. Denoting by $X$ the spectrum of $D$, we say that $(D\subseteq A)$ has the \textit{small boundary property} if $X$ has the $T(A)\vert_D$-small boundary property, where $T(A)\vert_D$ is identified with the corresponding compact set of Borel probability measures on $X$.
\end{defn}

\begin{remark}\label{rmk:sbp of crossed product implies usual sbp}
For an action $\alpha\colon G \curvearrowright X$ of an amenable group, the small boundary property of $(C(X)\subseteq C(X)\rtimes_\alpha G)$ is equivalent to $\alpha$ having the small boundary property, since any invariant measure $\mu\in M_G(X)$ is obtained as the restriction on $C(X)$ of the trace $\tau_\mu\in T(C(X)\rtimes_\alpha G)$ defined as $\tau_\mu(\cdot)\coloneqq \int_XE(\cdot)d\mu$, where $E\colon C(X)\rtimes_\alpha G \to C(X)$ denotes the canonical conditional expectation.

\end{remark}

We rely on the following characterization of the small boundary property, due to Kerr and Szab{\'o}.
(In \cite{KerrSzabo}, they work in the setting of group actions, but their argument generalizes straightforwardly, as we explain below.)

\begin{prop}[{cf.\ \cite[Theorem~5.5]{KerrSzabo}}] \label{prop:sbp}
Let $X$ be a compact metric space and $K$ a compact set of Borel probability measures on $X$. Then, the following are equivalent:
\begin{enumerate}
\item\label{sbpitem1} $X$ has the $K$-small boundary property.
\item\label{sbpitem2} For any $\delta>0$ there is a finite collection $\mathcal{I}$ of pairwise disjoint open subsets of $X$ such that each $U\in\mathcal{I}$ has diameter at most $\delta$ and $\mu(\bigsqcup_{U\in\mathcal{I}}U)=1$ for all $\mu\in K$.
\item\label{sbpitem3} For any $\delta,\epsilon>0$ there is a finite collection $\mathcal{I}$ of pairwise disjoint open subsets of $X$ such that each $U\in\mathcal{I}$ has diameter at most $\delta$ and $\mu(\bigsqcup_{U\in\mathcal{I}}U)>1-\epsilon$ for all $\mu\in K$.
\end{enumerate}
\end{prop}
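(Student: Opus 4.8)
The plan is to prove the cycle \ref{sbpitem1} $\Rightarrow$ \ref{sbpitem2} $\Rightarrow$ \ref{sbpitem3} $\Rightarrow$ \ref{sbpitem1}. The implication \ref{sbpitem2} $\Rightarrow$ \ref{sbpitem3} is trivial. For \ref{sbpitem1} $\Rightarrow$ \ref{sbpitem2}, given $\delta>0$ I would use the basis from \ref{sbpitem1} to pick, for each $x\in X$, an open set $U_x\ni x$ of diameter at most $\delta$ with $\mu(\partial U_x)=0$ for all $\mu\in K$; then extract a finite subcover $U_{x_1},\dots,U_{x_n}$ by compactness and disjointify it by setting $V_j:=U_{x_j}\setminus\overline{U_{x_1}\cup\dots\cup U_{x_{j-1}}}$. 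The $V_j$ are pairwise disjoint open sets of diameter at most $\delta$, and a short check shows $X\setminus\bigsqcup_j V_j\subseteq\bigcup_j\partial U_{x_j}$, a finite union of $\mu$-null sets, so $\mu(\bigsqcup_j V_j)=1$ for all $\mu\in K$. All the work is in \ref{sbpitem3} $\Rightarrow$ \ref{sbpitem1}, for which it suffices to produce, for every open $W$ and every $x\in W$, an open set $U$ with $x\in U\subseteq W$ and $\mu(\partial U)=0$ for all $\mu\in K$.

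I would obtain this in two steps. The first is a \emph{rounding lemma}: \ref{sbpitem3} implies that for every compact $C\subseteq X$, every open $W\supseteq C$ and every $\varepsilon>0$ there is an open $V$ with $C\subseteq V$, $\overline V\subseteq W$ and $\mu(\partial V)<\varepsilon$ for all $\mu\in K$. To prove it, fix $\rho>0$ so that the closed $\rho$-neighborhood of $C$ lies in $W$, apply \ref{sbpitem3} with diameter bound $\rho/4$ and tolerance $\varepsilon$ to get a finite disjoint family $\mathcal I$ with leftover $Z:=X\setminus\bigsqcup\mathcal I$ (closed, with $\mu(Z)<\varepsilon$ for all $\mu$). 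The key point is that a point of $C$ may lie in $\mathrm{int}(Z)$, where the naive construction below would miss it; to prevent this, first subdivide $\mathrm{int}(Z)$ inside the $\rho$-neighborhood of $C$ into finitely many small disjoint open sets and adjoin them to $\mathcal I$ — this keeps $\mu(Z)<\varepsilon$ but makes $Z$ \emph{nowhere dense near $C$} (boundaries of closed sets and the ``grid'' of a disjointification are nowhere dense). Now let $O$ be the union of the members of $\mathcal I$ meeting the $(\rho/2)$-neighborhood of $C$ and put $V:=\mathrm{int}(\overline O)$. Then $\overline V\subseteq\overline O\subseteq W$; for each $c\in C$ the set $B(c,\rho/2)\setminus Z$ lies in $O$ and, by nowhere density, is dense in $B(c,\rho/2)$, so $B(c,\rho/2)\subseteq\overline O$ and hence $c\in\mathrm{int}(\overline O)=V$; and $\partial V\subseteq\partial\overline O\subseteq\partial O\subseteq Z$, giving $\mu(\partial V)<\varepsilon$ for all $\mu$.

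The second step upgrades ``$<\varepsilon$'' to ``$=0$'' by a telescoping construction that redraws the boundary inside ever-smaller neighborhoods of the previous boundary. Start with $W_0:=W$, apply the rounding lemma to $C=\{x\}$ with tolerance $\varepsilon_0$ to get $V_0$, and choose an open $N_0\supseteq\partial V_0$ with $x\notin\overline{N_0}$, $\overline{N_0}\subseteq W$ and $\mu(N_0)<\varepsilon_0$ for all $\mu\in K$ (possible since $\mu(\partial V_0)<\varepsilon_0$ for all $\mu$; the uniformity over the compact set $K$ comes from a Dini-type argument applied to the upper semicontinuous functions $\mu\mapsto\mu(\{y:d(y,\partial V_0)\le 1/m\})$). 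Then apply the rounding lemma to the compact set $\overline{V_0}\setminus N_0$, which contains $x$, inside $V_0\cup N_0$ with tolerance $\varepsilon_1$: this produces $V_1$ with $V_1\setminus N_0=V_0\setminus N_0$ and $\partial V_1\subseteq\overline{N_0}$. Iterating with $\varepsilon_k\to 0$ and a decreasing sequence of neighborhoods $N_k\supseteq\partial V_k$ (with $\mu(N_k)\to 0$, $x\notin\overline{N_k}$ and $\partial V_k\subseteq\overline{N_{k-1}}$), the set $U:=\bigcup_k(V_k\setminus\overline{N_k})$ is open, contains $x$, lies in $W$, and — comparing $U$ with $V_{k'}$ on the open set $X\setminus\overline{N_{k'}}$ for a suitable $k'$ — satisfies $\partial U\subseteq\bigcap_k\overline{N_k}$, which is $\mu$-null for all $\mu\in K$. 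This $U$ is the desired set.

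The main obstacle is \ref{sbpitem3} $\Rightarrow$ \ref{sbpitem1}, and within it two points need care: the leftover-subdivision trick in the rounding lemma (the ``leftover'' $Z$ of a single almost-cover is neither $\mu$-null nor nowhere dense in general, so one cannot directly read off a null-boundary set from one application of \ref{sbpitem3}), and arranging the telescoping so that the nested neighborhoods $N_k$ shrink in measure \emph{uniformly over the compact set $K$} while trapping $\partial U$ inside their intersection. The remaining verifications, including \ref{sbpitem1} $\Rightarrow$ \ref{sbpitem2}, are routine point-set topology.
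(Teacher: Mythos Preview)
Your argument is correct. For \ref{sbpitem1}$\Rightarrow$\ref{sbpitem2} you and the paper do the same thing with different bookkeeping: the paper disjointifies the finite null-boundary cover $\{V_j\}_{j\in J}$ via the ``atoms'' $U_S:=\bigcap_{j\in S}V_j\cap\bigcap_{j\notin S}(X\setminus\overline{V_j})$, whereas you use the sequential stripping $V_j:=U_{x_j}\setminus\overline{\bigcup_{i<j}U_{x_i}}$; both are standard and yield the same conclusion.

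For \ref{sbpitem3}$\Rightarrow$\ref{sbpitem1}, the paper does not give a self-contained proof: it simply invokes the chain (iv)$\Rightarrow$(v)$\Rightarrow$(i) from \cite[Theorem~5.5]{KerrSzabo}, noting that their Proposition~3.4 (which is specific to the group-action setting) must be replaced by a Dini-type argument over the compact set $K$. Your rounding-lemma/telescoping route supplies precisely such a self-contained argument, and you appeal to Dini at the corresponding place --- to obtain neighborhoods $N_k\supseteq\partial V_k$ that are uniformly small in measure over $K$. So your proof is not a genuinely different strategy so much as an explicit realization of what the paper leaves to the reference; the tradeoff is that the paper's version is a two-line citation, while yours is self-contained but considerably longer.

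One point worth tightening: in the subdivision step of the rounding lemma, the reason a finite disjointification of an open cover of $\overline{\mathrm{int}(Z)\cap B_\rho(C)}$ has nowhere-dense leftover is that the boundary of \emph{any} open set is nowhere dense (for open $B$, $\partial B=\overline B\setminus B$ is closed and contains no nonempty open set since $B$ is dense in $\overline B$); this is what makes $\mathrm{int}(Z')\cap B_{\rho/2}(C)=\emptyset$ go through. With that observation in hand, the rest of your verification --- that the sets $V_k\setminus\overline{N_k}$ are increasing, that $U$ agrees with $V_{k}$ on $X\setminus\overline{N_{k}}$, and hence that $\partial U\subseteq\bigcap_k\overline{N_k}$ --- is in good shape.
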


\begin{proof}
For \ref{sbpitem1}$\Rightarrow$\ref{sbpitem2}, let $\delta>0$. By compactness and the $K$-small boundary property, we obtain a finite open cover of $X$, say $X=\bigcup_{j\in J}V_j$, where each open set $V_j$ has diameter at most $\delta$ and satisfies $\mu(\partial V_j)=0$ for all $\mu\in K$. For $S\subseteq J$ set
\begin{equation}
U_S \coloneqq \bigcap_{j\in S}V_j\cap \bigcap_{j\not\in S}(X\setminus\overline{V}_j).
\end{equation}
Then the collection $\mathcal{I} \coloneqq \{U_S: S\subseteq J\}$ is directly seen to satisfy the desired properties.

The implication \ref{sbpitem2}$\Rightarrow$\ref{sbpitem3} is trivial, and \ref{sbpitem3}$\Rightarrow$\ref{sbpitem1} follows from the same argument as in the proof of \cite[Theorem~5.5]{KerrSzabo} (iv)$\Rightarrow$(v)$\Rightarrow$(i), but one needs to replace the use of Proposition~3.4 therein with a standard application of Dini's theorem. 
\end{proof}

\subsection{Uniform property \texorpdfstring{$\Gamma$}{Gamma}}
\begin{defn} \label{def:Gamma}
Let $(D\subseteq A)$ be a sub-$C^\ast$-algebra with $A$ separable and $T(A)$ nonempty and compact.
We say that $(D\subseteq A)$ has \emph{uniform property $\Gamma$} if the sub-$C^\ast$-algebra $(\kappa(D^\cU) \subseteq A^\cU)$ is unital (with $\kappa$ as in \eqref{eq:kappa}) and if for any $k \in \N$ there exists a partition of unity of $A^\cU$ consisting of projections $q_1, \dots, q_k \in \kappa(D^\cU) \cap A'$, meaning that $\sum_{i=1}^k q_i = 1_{A^\cU}$, such that 
\begin{equation} \label{eq:Gamma} 
\tau(q_ia) = \frac1k \tau(a), \quad \tau \in T_\cU(A), \, a \in D, \, i = 1, \dots, k. 
\end{equation}
\end{defn}

\begin{remark} \label{remark:unital}
When $T(A)$ is compact, the sub-$C^\ast$-algebra $(\kappa(D^\cU) \subseteq A^\cU)$ is automatically unital if, for instance, $(D \subseteq A)$ is nondegenerate (\cite[Proposition~1.11]{CETWW}). An even weaker condition still ensuring this is the existence in $D$ of what could be called a \emph{tracial approximate unit}, i.e., a sequence of positive elements $(d_n)_{n=1}^\infty$ such that $\lim_{n \to \infty} \| d_n a - a\|_{2, T(A)} = \lim_{n \to \infty} \| a d_n - a \|_{2, T(A)} = 0$, for all $a \in A$.
\end{remark}

\begin{remark} \label{remark:expectation}
Definition~\ref{def:Gamma} generalizes \cite[Definition~2.1]{CETWW}: a $C^\ast$-algebra $A$ has uniform property $\Gamma$ as defined there if and only if the sub-$C^\ast$-algebra $(A\subseteq A)$ has uniform property $\Gamma$.

On the other hand, if $(D \subseteq A)$ has uniform property $\Gamma$, it is not true a priori that $A$ has uniform property $\Gamma$, since condition \eqref{eq:Gamma} is only required to hold for elements in $D$.
This choice is motivated by our definition of complemented partitions of unity for sub-$C^\ast$-algebras, introduced in the next section (see Definition~\ref{def:CPoU}).
Nevertheless, if $(D\subseteq A)$ has uniform property $\Gamma$ and if there is a conditional expectation $E \colon A \to D$ such that every trace $\tau \in T(A)$ factors through $E$ (that is $\tau = \tau \circ E$),
then \eqref{eq:Gamma} holds automatically for all $a \in A$, and therefore $A$ has uniform property $\Gamma$. This is what happens for instance in our main case of interest: $C^\ast$-diagonals (see \cite{Kumjian:CJM,Renault:IMSB} and \cite[Lemma~4.3]{LiRenault}).
\end{remark}

\begin{remark}
If $(D\subseteq D\rtimes_\alpha G)$ is a sub-$C^\ast$-algebra originating from an action $\alpha$ of a discrete amenable group $G$ on a unital $C^\ast$-algebra $D$, then Definition~\ref{def:Gamma} can be formulated without reference to the crossed product:\footnote{For an action $G \curvearrowright S$ of a group $G$ on a set $S$, we use $S^G$ to denote the subset of $S$ consisting of elements fixed by every $g\in G$.} the definition asks for a partition of projections $q_1,\dots,q_k \in ((D,T(D)^G)^\cU)^G \cap D'$, such that \eqref{eq:Gamma} holds for all limit traces coming from $T(D)^G$.

Note that in   \cite[Definition~3.1]{GHV} a notion of \emph{equivariant} uniform property $\Gamma$ for actions on unital $C^\ast$-algebras is defined, which is the same as the formulation just given, with the key differences of requiring $q_1,\dots,q_k \in (D^\cU)^G \cap D'$ and for \eqref{eq:Gamma} to hold for all limit traces originating from elements of $T(D)$, instead of the subset $T(D)^G$ (a generalization of equivariant uniform property $\Gamma$ to the non-unital case can be found in \cite[Definition~2.1]{SzaboWouters}, and a variant of it only depending on invariant traces already appeared in \cite[Definition~4.1]{ESPA}).
This {equivariant} formulation of property $\Gamma$ is much stronger than Definition~\ref{def:Gamma}, and it implies that $D$ itself has uniform property $\Gamma$.
In particular, it never holds when $D$ is commutative.
\end{remark}

Definition~\ref{def:Gamma} can be upgraded, by a routine reindexing argument, to a version where the projections $q_1,\dots,q_k$ are required to commute with an arbitrary given $\| \cdot \|_{2, T_\cU(A)}$-separable subset of $A^\cU$, and where \eqref{eq:Gamma} holds for all elements in an arbitrary $\| \cdot \|_{2,T_\cU(A)}$-separable subset of $\kappa(D^\cU)$. We summarize this fact in the following lemma, whose proof is omitted (see e.g.\ \cite[Remark~5.15]{CCEGSTW} or \cite[Proposition~4.4]{Winter12}).

\begin{lemma} \label{lemma:Gamma+}
Let $(D\subseteq A)$ be a sub-$C^\ast$-algebra with $A$ separable and $T(A)$ nonempty and compact, and suppose that $(D \subseteq A)$ has uniform property $\Gamma$. Then
for every $k \in \N$, every $\| \cdot \|_{2, T_\cU(A)}$-separable subset $S$ of $A^\cU$ and every $\| \cdot \|_{2, T_\cU(A)}$-separable subset $T$ of $\kappa(D^\cU)$, there exists a partition of unity of $A^\cU$ consisting of projections $q_1, \dots, q_k \in \kappa(D^\cU) \cap S'$ such that
\begin{equation} 
 \tau(q_ia) = \frac1k \tau(a), \quad \tau \in T_\cU(A), \, a \in T, \, i = 1, \dots, k. 
\end{equation} 
\end{lemma}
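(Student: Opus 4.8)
The plan is to prove this by a standard reindexation argument based on Kirchberg's $\varepsilon$-test, as referenced in \cite[Remark~5.15]{CCEGSTW} and \cite[Proposition~4.4]{Winter12}. First I would reduce to the case where $S$ and $T$ are countable: since $x \mapsto [q,x]$ and $x \mapsto \tau(qx)$ are Lipschitz for $\|\cdot\|_{2,T_\cU(A)}$ (with constants $2\|q\|$ and $\|q\|_{2,T_\cU(A)}$), it suffices to find projections commuting with a countable $\|\cdot\|_{2,T_\cU(A)}$-dense subset $\{a_m\}_{m\in\N}$ of $S$ and satisfying the trace identity against a countable $\|\cdot\|_{2,T_\cU(A)}$-dense subset $\{b_m\}_{m\in\N}$ of $T$. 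Since $(\kappa(D^\cU)\subseteq A^\cU)$ is unital, I would also insert $1_{A^\cU}$ into $\{b_m\}_{m\in\N}$; this is the device that will ultimately force $\sum_i q_i = 1_{A^\cU}$. Finally I would fix bounded representative sequences $a_m = [(a_{m,n})_n]$ with $a_{m,n}\in A$ and $b_m = [(b_{m,n})_n]$ with $b_{m,n}\in D$ (the latter possible since $b_m\in\kappa(D^\cU)$).

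Next I would run the $\varepsilon$-test over the sequence spaces $X_n := \{(d_1,\dots,d_k)\in D^k : \|d_i\|\le 2\}$ with the countable family of nonnegative test functions (in $\bar d = (d_1,\dots,d_k)$, and depending on $n$): $\|d_i^* - d_i\|_{2,T(A)}$; $\|d_i^2 - d_i\|_{2,T(A)}$; $\|d_i d_{i'}\|_{2,T(A)}$ for $i\ne i'$; $\|[d_i, a_{m,n}]\|_{2,T(A)}$; and $\sup_{\tau\in T(A)}\lvert\tau(d_i b_{m,n}) - \tfrac1k\tau(b_{m,n})\rvert$, for $i,i'\in\{1,\dots,k\}$ and $m\in\N$. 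If these can all be made to vanish along $\cU$ on a common sequence $(\bar d_n)_n \in \prod_n X_n$, then $q_i := [(d_{i,n})_n]\in\kappa(D^\cU)$ are pairwise orthogonal projections (using that $A^\cU$ is a $C^\ast$-algebra on which $\|\cdot\|_{2,T_\cU(A)}$ is a norm), they commute with $\{a_m\}_m$ and hence with $S$, and $\tau(q_i b_m) = \tfrac1k\tau(b_m)$ for every $\tau\in T_\cU(A)$ and every $m$, hence for every $b\in T$. Taking $b_m = 1_{A^\cU}$ gives $\tau(q_i) = \tfrac1k$ for all $\tau\in T_\cU(A)$, so $\sum_i q_i$ is a projection with $\tau(\sum_i q_i) = 1$ for all such $\tau$, i.e.\ $\|1_{A^\cU} - \sum_i q_i\|_{2,T_\cU(A)} = 0$ and $\sum_i q_i = 1_{A^\cU}$ --- exactly the desired partition.

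It then remains to verify the finite-stage hypothesis of the $\varepsilon$-test, which is the only place uniform property $\Gamma$ is used. Given $\varepsilon > 0$ and finitely many of the functions above --- those involving $a_1,\dots,a_M$ and $b_1,\dots,b_M$, say --- I apply Definition~\ref{def:Gamma} to obtain pairwise orthogonal projections $Q_1,\dots,Q_k\in\kappa(D^\cU)\cap A'$ with $\sum_i Q_i = 1_{A^\cU}$ and $\tau(Q_i a) = \tfrac1k\tau(a)$ for all $\tau\in T_\cU(A)$, $a\in D$, and fix bounded representatives $Q_i = [(D_{i,j})_j]$ with $D_{i,j}\in D$. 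For each fixed $n$, each of the finitely many relevant conditions --- $D_{i,j}$ nearly a projection, $D_{i,j}D_{i',j}$ near $0$, $D_{i,j}$ nearly commuting with the \emph{fixed} element $a_{m,n}\in A$, and $\sup_\tau\lvert\tau(D_{i,j}b_{m,n}) - \tfrac1k\tau(b_{m,n})\rvert$ small for the \emph{fixed} element $b_{m,n}\in D$ --- holds for $j$ in a set belonging to $\cU$; intersecting these finitely many members of $\cU$ gives $W_n\in\cU$. Choosing $\phi(n)\in W_n$ with $\phi(n) > n$ and setting $d_{i,n} := D_{i,\phi(n)}$ produces a sequence in $\prod_n X_n$ on which each of the relevant test functions is $<\varepsilon$ for every $n$, completing the finite stage.

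The step I expect to carry the only real content is the reindexation in the last paragraph: Definition~\ref{def:Gamma} produces projections commuting with \emph{constant} sequences only, while the lemma asks for commutation with the genuinely non-constant sequences $(a_{m,n})_n$, and the gap is closed by evaluating the representatives of the $Q_i$ at coordinate-dependent indices $\phi(n)$ drawn from the $\cU$-large set on which they almost commute with the single element $a_{m,n}$. A secondary subtlety worth flagging is that one should deliberately avoid putting a representing sequence of $1_{A^\cU}$ into the test functions --- it behaves badly under the reindexation $\phi$ --- and instead recover $\sum_i q_i = 1_{A^\cU}$ from the trace identity against $1_{A^\cU}\in T$ combined with the orthogonality of the $q_i$.
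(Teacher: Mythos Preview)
Your proposal is correct and is precisely the routine reindexing argument the paper has in mind: the paper omits the proof entirely, referring to \cite[Remark~5.15]{CCEGSTW} and \cite[Proposition~4.4]{Winter12}, which package exactly the $\varepsilon$-test/diagonal reindexation you carry out. Your handling of the two nontrivial points---reindexing the representatives of the $Q_i$ against the coordinate-dependent elements $a_{m,n}$ and $b_{m,n}$, and recovering $\sum_i q_i = 1_{A^\cU}$ from the trace identity against $1_{A^\cU}\in T$ rather than via a separate test function---is clean and correct.
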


We will need the following technical consequence of uniform property $\Gamma$ for a sub-$C^\ast$-algebra, which is a generalization of the \emph{tracial projectionization} result obtained in \cite[Lemma~2.4]{CETWW}.

\begin{lemma}\label{lem:projectionization}
Let $(D\subseteq A)$ be a sub-$C^\ast$-algebra with $A$ separable and $T(A)$ nonempty and compact, and let $S \subseteq A^\cU$ and $T\subseteq \kappa(D^\cU)$ be $\|\cdot\|_{2,T_\cU(A)}$-separable subsets. 
If $(D\subseteq A)$ has uniform property $\Gamma$, then for any positive contraction $b \in \kappa(D^\cU) \cap S'$, there is a projection $p \in \kappa(D^\cU) \cap S'$ such that
\begin{equation}  \tau(pa)=\tau(ba), \quad a \in T,\, \tau\in T_\cU(A). \end{equation} 
\end{lemma}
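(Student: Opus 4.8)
The plan is to mimic the tracial projectionization argument of \cite[Lemma~2.4]{CETWW}, adapting it to the relative setting by working inside $\kappa(D^\cU)$ and using Lemma~\ref{lemma:Gamma+} to produce central partitions of unity that also commute with (a countable dense subset of) $S$. Fix a positive contraction $b \in \kappa(D^\cU) \cap S'$. The idea is to chop $[0,1]$ into $k$ pieces and, on each piece, approximate $b$ by a constant value $i/k$ weighted by a spectral projection of $b$; the central projections $q_1,\dots,q_k$ from uniform property $\Gamma$ will then glue these local approximations into a single projection $p$ that agrees with $b$ on traces. More precisely, for $k \in \N$, let $e_i$ ($i = 1,\dots,k$) be the spectral projection of $b$ corresponding to the interval $[(i-1)/k, i/k)$ (with the top interval closed); since $b \in \kappa(D^\cU) \cap S'$ and $\kappa(D^\cU)$ is a $C^\ast$-algebra, these projections lie in $\kappa(D^\cU) \cap S'$, though one must be slightly careful: $\kappa(D^\cU)$ need not be all of $D^\cU$, but it is a $C^\ast$-subalgebra of $A^\cU$, so functional calculus applied to $b$ stays inside it.

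Next I would apply Lemma~\ref{lemma:Gamma+} with the parameter $k$, the separable set $S' := S \cup \{b, e_1, \dots, e_k\} \cup (\text{a countable }\|\cdot\|_{2,T_\cU(A)}\text{-dense subset of the }C^\ast\text{-algebra generated by these})$, and the separable set $T$ as given (enlarged to include $\{e_i a : a \in T\}$ if convenient), to obtain a partition of unity $q_1,\dots,q_k \in \kappa(D^\cU) \cap (S')'$ with $\tau(q_j c) = \frac1k\tau(c)$ for all $c \in T$, all $j$, all $\tau \in T_\cU(A)$. Now set
\begin{equation}
p := \sum_{i=1}^{k} \Big( \sum_{j=1}^{i} q_j \Big) e_i \in \kappa(D^\cU) \cap S'.
\end{equation}
Since the $e_i$ are orthogonal projections commuting with the orthogonal-sum-of-$q_j$ projections $\sum_{j \le i} q_j$, each summand is a projection and distinct summands are orthogonal, so $p$ is a projection in $\kappa(D^\cU) \cap S'$. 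The point is that on the ``spectral slice'' $e_i$ the element $b$ has value roughly $i/k$ while $\sum_{j \le i} q_j$ has trace-value exactly $i/k$; so $\tau(p a) = \sum_i \tau\big((\sum_{j\le i} q_j) e_i a\big) = \sum_i \frac{i}{k}\tau(e_i a)$ for $a \in T$ (using that $q_j$ commutes with $e_i, a$ and the trace identity applied to $c = e_i a \in \kappa(D^\cU)$, which one arranges to be in $T$ by enlarging $T$), while $\tau(ba) = \sum_i \tau(b e_i a)$ and $\|b e_i - \frac{i}{k} e_i\| \le 1/k$, giving $|\tau(pa) - \tau(ba)| \le \|a\| \sum_i \|b e_i - \tfrac{i}{k}e_i\|\,\tau(e_i) \le \|a\|/k \cdot \sum_i \tau(e_i) = \|a\|/k$ for $a \in T$. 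Hence for each $k$ we get a projection $p_k \in \kappa(D^\cU) \cap S'$ with $\sup_{a \in T_0, \tau \in T_\cU(A)} |\tau(p_k a) - \tau(b a)| \le \|a\|/k$ on a fixed countable dense $T_0 \subseteq T$.

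Finally, a standard ultrafilter reindexing argument (as in the passage from approximate to exact statements in $\ell^\infty$-ultrapowers, cf.\ the proof of \cite[Lemma~2.4]{CETWW}): represent each $p_k$ and $b$ and the elements of $S_0, T_0$ (countable dense subsets) by sequences, form a diagonal sequence along $\cU$, and use that $\|[p_k, s]\|_{2,T_\cU(A)} = 0$ and $|\tau(p_k a)-\tau(ba)| \to 0$ to extract a single projection $p \in \kappa(D^\cU) \cap S'$ with $\tau(pa) = \tau(ba)$ exactly for all $a \in T$, $\tau \in T_\cU(A)$; exactness on all of $T$ (not just $T_0$) and all limit traces follows by $\|\cdot\|_{2,T_\cU(A)}$-continuity of $c \mapsto \sup_\tau |\tau(c a)|$. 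The main obstacle is bookkeeping rather than conceptual: one must ensure the central projections $q_j$ genuinely commute with $b$ and the spectral projections $e_i$ (handled by feeding these into the separable set $S$ in Lemma~\ref{lemma:Gamma+}), that the trace identity is invoked only for elements of $\kappa(D^\cU)$ lying in the chosen separable set $T$ (handled by enlarging $T$ to contain the relevant products $e_i a$), and that the final reindexing preserves membership in $\kappa(D^\cU)$ — which it does, since $\kappa(D^\cU)$ is precisely the set of classes representable by $\ell^\infty(D)$-sequences and the construction only ever takes functional calculus and finite sums/products within it.
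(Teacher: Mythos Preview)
Your proposal is correct and takes essentially the same approach as the paper: both follow the proof of \cite[Lemma~2.4]{CETWW} verbatim, substituting Lemma~\ref{lemma:Gamma+} for \cite[Lemma~2.2]{CETWW}, and the paper's only additional remark---that the resulting $p$ lies in $\kappa(D^\cU)\cap S'$ because $b$ and the $q_j$ do---is exactly the bookkeeping point you identify. Your write-up simply unpacks the CETWW construction (spectral slices $e_i$, the formula $p=\sum_i(\sum_{j\le i}q_j)e_i$, and the final reindexing) in more detail than the paper chooses to.
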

 
\begin{proof} 
This is proved following verbatim the proof of \cite[Lemma~2.4]{CETWW}, with Lemma~\ref{lemma:Gamma+} replacing the use of \cite[Lemma~2.2]{CETWW}. The element $p$ defined right before \cite[Equation~(2.9)]{CETWW} belongs to $\kappa(D^\cU)\cap S'$ because $b$ and the projections $q_1,\dots,q_k$ obtained from uniform property $\Gamma$ do. 
\end{proof}

\section{Complemented partitions of unity} \label{sec:CPoU}
We begin this section by introducing a version of complemented partitions of unity for sub-$C^\ast$-algebras, which we then use to deduce the small boundary property in Proposition~\ref{prop:CPoU_SBP}. 

\begin{defn} \label{def:CPoU}
Let $(D\subseteq A)$ be a sub-$C^\ast$-algebra, with $A$ separable and $T(A)$ nonempty and compact.
We say that $(D\subseteq A)$ has \emph{complemented partitions of unity} (abbreviated as \emph{CPoU}) if the sub-$C^\ast$-algebra $(\kappa(D^\cU) \subseteq A^\cU)$ is unital (with $\kappa$ as in \eqref{eq:kappa}) and if for any $\|\cdot\|_{2,T_\cU(A)}$-separable subset $S$ of $A^\cU$, for any $a_1,\dots,a_k \in D_+$ and $\delta > 0$ satisfying
\begin{equation}\label{eq:CPoUinput} \sup_{\tau \in T(A)} \min\{\tau(a_1),\dots,\tau(a_k)\} < \delta, \end{equation}
there exists a partition of unity of $A^\cU$ consisting of projections $p_1,\dots,p_k \in \kappa(D^\cU) \cap S' $ such that
\begin{equation}
\tau(a_ip_i)\leq \dl\tau(p_i), \quad \tau \in T_\cU(A), \, i=1,\dots,k.
\end{equation} 
\end{defn}

Observations analogous to those in Remarks~\ref{remark:unital} and \ref{remark:expectation} hold for Definition~\ref{def:CPoU}. In particular, if there is a conditional expectation $E \colon A \to D$ such that
$\tau = \tau \circ E$ for every $\tau \in T(A)$ (e.g.\ when $(D \subseteq A)$ is a $C^\ast$-diagonal), then Definition~\ref{def:CPoU} is equivalent to its formal strengthening where $a_1,\dots,a_k$ are allowed to range in $A_+$ (instead of only $D_+$). In this case we get that if $(D \subseteq A)$ has CPoU, then $(A \subseteq A)$ has CPoU as well, meaning that $A$ has CPoU in the sense of \cite[Definition~3.1]{CETWW}. 

A similar remark holds for the following definition, which is a weakening of CPoU and a technical intermediate notion needed for our main result.
\begin{defn}\label{def:weakCPoU}
Let $(D\subseteq A)$ be a sub-$C^\ast$-algebra, with $A$ separable and $T(A)$ nonempty and compact.
We say that $(D\subseteq A)$ has \emph{weak CPoU} if the sub-$C^\ast$-algebra $(\kappa(D^\cU) \subseteq A^\cU)$ is unital and if for any $\|\cdot\|_{2,T_\cU(A)}$-separable subset $S$ of $A^\cU$, for any $a_1,\dots,a_k \in D_+$ and $\delta > 0$ satisfying
\begin{equation}   
\sup_{\tau \in T(A)} \min\{\tau(a_1),\dots,\tau(a_k)\} < \delta,
\end{equation} 
and any positive contraction $q \in \kappa(D^\cU) \cap A'$ with $\tau(q)>0$ for all $\tau\in T_\cU(A)$,
there are positive contractions $e_1,\dots,e_k \in \kappa(D^\cU) \cap S'$ (not necessarily projections) such that
\begin{equation}
\sum_{i=1}^k e_i = 1_{A^\cU} \ \text{and}\ \tau(a_ie_iq)\leq \dl\tau(e_iq), \quad \tau \in T_\cU(A),\, i=1,\dots,k. 
\end{equation} 
\end{defn}

The concept of weak CPoU has appeared without a formal name in \cite[Lemma~3.6]{CETWW}, where $(A\subseteq A)$ is proven to have weak CPoU assuming that $A$ is nuclear and $T(A)$ is nonempty and compact, and then in \cite[Theorem~6.16]{CCEGSTW}, where this result is generalized, in particular dropping the nuclearity hypothesis, and presented with a more transparent proof. 

We state here \cite[Lemma 6.15]{CCEGSTW}, an auxiliary lemma used to prove weak CPOU (for $A\subseteq A$).
Our statement is in a less general form than in \cite{CCEGSTW}, as the version there is in the more abstract setting of a tracially complete $C^\ast$-algebra $(\mathcal M,X)$ (see \cite[\S 3.3]{CCEGSTW} for more details).

\begin{prop}[{\cite[Lemma~6.15]{CCEGSTW}}]\label{prop:wcpou}
Let $A$ be a separable $C^\ast$-algebra with $T(A)$ nonempty and compact. Let $(h_n)_{n=1}^\infty\subseteq A_+^1$ be an approximate unit of $A$, let $q\in A_+^1$ be such that $\tau(q)>0$ for all $\tau\in T(A)$, let $\epsilon>0$, let $n_0\in\mathbb{N}$, let $\mathcal{F} \subseteq A$ be a finite subset, and let $a_1,\dots,a_k\in A_+$ be such that
\begin{equation} \label{eq:q}
\min_{1\le i \le k}\tau(a_iq) <\delta\tau(q),\quad \tau\in T(A).
\end{equation}
Then, there are positive contractions $e_1,\dots,e_k \in A_+^1$ and $n\ge n_0$ such that
\begin{enumerate}
\item \label{item1:unit} $\|\sum_{i=1}^ke_i-h_n\|_{2, T(A)}<\epsilon$,
\item $\|[e_i,b]\|_{2, T(A)}<\epsilon$, for all $b\in\mathcal{F}$, $i=1,\dots, k$,
\item $\tau(a_ie_iq)<\delta\tau(e_iq)+\epsilon$, for all $\tau\in T(A)$, $i=1,\dots,k$.
\end{enumerate}
\end{prop}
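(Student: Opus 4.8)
The plan is to pass to the tracial completion of $A$, solve an \emph{exact} version of the statement there using its center, and then descend back to $A$; the error $\epsilon$ and the freedom $n\ge n_0$ in the conclusion are precisely the slack produced by this descent. Concretely, since $T(A)$ is compact, the completion $\mathcal{M}$ of $A$ for $\|\cdot\|_{2,T(A)}$ is a unital tracially complete $C^\ast$-algebra in the sense of \cite[\S3]{CCEGSTW}, with distinguished trace space $T(A)$, in which $A$ is $\|\cdot\|_{2,T(A)}$-dense and $\|1_{\mathcal M}-h_n\|_{2,T(A)}\to0$ and $\|[h_n,b]\|_{2,T(A)}\to0$ for all $b\in A$ (cf.\ the discussion after \cite[Proposition~1.11]{CETWW}). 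It therefore suffices to produce positive contractions $\bar e_1,\dots,\bar e_k\in\mathcal M$ commuting with $\mathcal F$, with $\sum_i\bar e_i=1_{\mathcal M}$ and $\tau(a_i\bar e_iq)\le(\delta-\eta)\tau(\bar e_iq)$ for all $\tau\in T(A)$ and some $\eta>0$: then I would set $\hat e_i:=h_n^{1/2}\bar e_ih_n^{1/2}$ for a suitably large $n\ge n_0$, so that $\sum_i\hat e_i=h_n$ exactly, commutators with $\mathcal F$ stay small, and the inequalities survive with constant $\delta$ up to $\epsilon$, and finally approximate each $\hat e_i$ in $\|\cdot\|_{2,T(A)}$ by an element of $A_+^1$ via functional calculus. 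Note that the conclusion only asks for positive contractions, not projections, which is what makes the following step possible.

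The first genuine step is a partition of unity on the trace space. The function $\tau\mapsto\min_i\tau(a_iq)-\delta\tau(q)$ is continuous and strictly negative on the compact set $T(A)$, hence $\le-\eta_0$ for some $\eta_0>0$; since also $0\le q\le1$, the open sets $V_i:=\{\tau\in T(A):\tau(a_iq)<\delta\tau(q)-\eta_0/2\}$ cover $T(A)$, and $\tau(a_iq)\le(\delta-\eta)\tau(q)$ on $\overline{V_i}$ with $\eta:=\eta_0/2$. I would then choose a continuous partition of unity $g_1,\dots,g_k$ on $T(A)$ with $\operatorname{supp}g_i\subseteq V_i$.

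The heart of the argument is realizing $(g_i)$ by \emph{central} elements of $\mathcal M$. Using the structure theory of $\mathcal M$ — the description of its center $Z(\mathcal M)$ and the disintegration of $\mathcal M$ over the underlying measure space of $Z(\mathcal M)$, together with the barycenter map $\beta$ from that measure space to $T(A)$ — one pulls $g_i$ back to positive contractions $\bar e_i:=g_i\circ\beta\in Z(\mathcal M)$ with $\sum_i\bar e_i=1_{\mathcal M}$; being central, they commute with $\mathcal F$ (and with $a_i$ and $q$). For $\tau\in T(A)$, disintegrating $\tau$ over $Z(\mathcal M)$ and using that the fibres are finite factors, hence have unique trace, gives $\tau(a_i\bar e_iq)=\int g_i(\beta(\omega))\,\beta(\omega)(a_iq)\,d\nu_\tau(\omega)$ for the measure $\nu_\tau$ induced by $\tau$ on $Z(\mathcal M)$; since $\beta(\omega)\in\operatorname{supp}g_i\subseteq V_i$ forces $\beta(\omega)(a_iq)\le(\delta-\eta)\beta(\omega)(q)$, this integral is bounded by $(\delta-\eta)\int g_i(\beta(\omega))\beta(\omega)(q)\,d\nu_\tau(\omega)=(\delta-\eta)\tau(\bar e_iq)$. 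This is exactly the exact statement required in the first paragraph.

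The main obstacle is this last step: that a continuous partition of unity on the abstract trace space can be realized by genuinely central positive contractions of $\mathcal M$ compatibly with the inequality in \eqref{eq:q}. This is where the center/disintegration theory for tracially complete $C^\ast$-algebras from \cite[\S3]{CCEGSTW} does the work; alternatively, the central elements can be manufactured by Dixmier-type averaging applied to functional-calculus cut-downs associated to the partition, with traces controlled through the same disintegration. Everything else — compactness of $T(A)$, continuous functional calculus, density of $A$ in $\mathcal M$, and the bookkeeping with $h_n$ — is routine. The whole argument can equivalently be run inside the uniform tracial ultrapower $A^\cU$ (unital because $T(A)$ is compact), where the required central elements and the exact partition of unity live, with the ``$\epsilon$ and $n\ge n_0$'' form of the conclusion recovered by a standard diagonal/reindexing argument.
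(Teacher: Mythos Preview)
The paper's own proof of this proposition is a one-line citation: it applies \cite[Lemma~6.15]{CCEGSTW} to the tracial completion $(\overline{A}^{T(A)},T(A))$, notes that the approximate unit converges to the unit there in $\|\cdot\|_{2,T(A)}$, and observes that the hypothesis in \cite{CCEGSTW} can be weakened to \eqref{eq:q}. Your proposal instead sketches a direct proof of the cited lemma, so you are supplying what the paper treats as a black box.

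Your strategy is sound and matches the shape of the argument behind \cite[Lemma~6.15]{CCEGSTW}: pass to the tracial completion $\mathcal M$, use compactness of $T(A)$ and strict negativity of $\tau\mapsto\min_i\tau(a_iq)-\delta\tau(q)$ to produce a continuous partition of unity $(g_i)$ subordinate to the sets $V_i$, realize the $g_i$ by suitable elements of $\mathcal M$ commuting with $\mathcal F$, and then descend to $A$ using density and the approximate unit $(h_n)$. The descent bookkeeping you outline (conjugating by $h_n^{1/2}$, then approximating in $\|\cdot\|_{2,T(A)}$ by positive contractions in $A$) is routine and correct.

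One caution about the step you rightly flag as the crux. You propose to realize the $g_i$ as \emph{central} elements $\bar e_i=g_i\circ\beta\in Z(\mathcal M)$ via a barycenter map from ``the measure space of $Z(\mathcal M)$'' to $T(A)$. The tracial completion $\overline{A}^{T(A)}$ is a $C^\ast$-algebra rather than a von Neumann algebra, so classical direct-integral/disintegration theory is not immediately available, and the relationship between $Z(\mathcal M)$ and $\partial_e T(A)$ is subtle when $T(A)$ is not a Bauer simplex. This is exactly where one must lean on the structure results for tracially complete $C^\ast$-algebras developed in \cite{CCEGSTW} (as you indicate), or alternatively work in the finite part of the bidual or the uniform tracial ultrapower and then descend. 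Your Dixmier-averaging alternative is also viable. With that caveat, your sketch is correct and simply unpacks what the paper defers to \cite{CCEGSTW}.
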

\begin{proof}
This statement is an immediate consequence of \cite[Lemma~6.15]{CCEGSTW} applied to the tracial completion $(\overline{A}^{T(A)}, T(A))$ (see \cite[\S 3.3]{CCEGSTW} for the definition and more details). Note that item \ref{item1:unit} follows since every approximate unit in $A$ converges in $\| \cdot \|_{2,T(A)}$-norm to the unit of $\overline{A}^{T(A)}$ (see \cite[Proposition 3.9]{CCEGSTW}). Notice moreover that, while the statement of \cite[Lemma~6.15]{CCEGSTW} asks for
\begin{equation}
\min_{1\le i \le k}\tau(a_i) <\delta\tau(q),\quad \tau\in T(A),
\end{equation}
the proof actually only requires \eqref{eq:q}.
\end{proof}

The following is a generalization of \cite[Lemma 3.7]{CETWW}, and it shows that the gap between CPoU and weak CPoU is precisely uniform property $\Gamma$.

\begin{prop}
\label{prop:weakCPoUplusGamma}
Let $(D\subseteq A)$ be a sub-$C^\ast$-algebra with $A$ separable and $T(A)$ nonempty and compact.
If $(D\subseteq A)$ has both weak CPoU and property $\Gamma$, then it has CPoU.
\end{prop}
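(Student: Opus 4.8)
The plan is to follow the strategy of \cite[Lemma~3.7]{CETWW}: start from the output of weak CPoU, which gives positive contractions $e_1,\dots,e_k$ summing to $1_{A^\cU}$ with the right tracial estimates, and then use uniform property $\Gamma$ — specifically the projectionization result of Lemma~\ref{lem:projectionization} — to replace these positive contractions by genuine projections without disturbing the relevant traces. The key point is that weak CPoU produces elements that already lie in $\kappa(D^\cU) \cap S'$, and projectionization keeps us inside $\kappa(D^\cU) \cap S'$ as well, so the whole argument stays within the sub-$C^\ast$-algebra.

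First I would fix a $\|\cdot\|_{2,T_\cU(A)}$-separable subset $S \subseteq A^\cU$ and elements $a_1,\dots,a_k \in D_+$ with $\sup_{\tau\in T(A)}\min_i \tau(a_i) < \delta$, and apply weak CPoU with the choice $q = 1_{A^\cU}$ (which is a positive contraction in $\kappa(D^\cU)\cap A'$ with $\tau(q)=1>0$ for all $\tau \in T_\cU(A)$), enlarging $S$ to also contain the $a_i$'s. This yields positive contractions $e_1,\dots,e_k \in \kappa(D^\cU)\cap S'$ with $\sum_i e_i = 1_{A^\cU}$ and $\tau(a_i e_i) \le \delta \tau(e_i)$ for all $\tau \in T_\cU(A)$ and all $i$. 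The next step is the standard telescoping trick to convert the partition of unity by positive contractions into one by orthogonal positive elements: set $f_i := (e_1+\dots+e_{i-1})$, so that one works with the increasing sequence $0 \le f_1 \le f_2 \le \dots$; the differences recover the $e_i$. One then successively applies Lemma~\ref{lem:projectionization} to the relevant positive contractions, passing to a nested sequence of projections, and takes orthogonal differences to obtain projections $p_1,\dots,p_k \in \kappa(D^\cU)\cap S'$ summing to $1_{A^\cU}$ and satisfying $\tau(p_i a) = \tau(e_i a)$ for $a$ ranging over $T := \{a_1,\dots,a_k\} \cup \{1_{A^\cU}\}$ (in particular $\tau(p_i) = \tau(e_i)$ and $\tau(a_i p_i) = \tau(a_i e_i) \le \delta\tau(e_i) = \delta\tau(p_i)$), exactly as needed.

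The main obstacle, and the place requiring care, is the bookkeeping in the telescoping/projectionization step: one must arrange the successive applications of Lemma~\ref{lem:projectionization} so that the separable sets $S$ and $T$ are consistently enlarged (to include all previously constructed projections, so that later applications commute with them and preserve the right traces on them), and so that the orthogonality of the resulting $p_i$ and the partition-of-unity identity $\sum_i p_i = 1_{A^\cU}$ come out exactly rather than only tracially. This is handled exactly as in \cite[Lemma~3.7]{CETWW}: one builds $p_1 \le p_1 + p_2 \le \dots$ as an increasing sequence of projections in $\kappa(D^\cU)\cap S'$, at each stage projectionizing $(p_1+\dots+p_{i-1}) + e_i$ (which is a positive contraction since the partial sums of the $e_j$ are, and projectionization of the tail preserves this), and uses that a projection dominating another projection splits orthogonally. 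Since both weak CPoU and projectionization keep all constructed elements inside $\kappa(D^\cU) \cap S'$, no further structural input is needed, and the proof of \cite[Lemma~3.7]{CETWW} goes through verbatim with the cited lemmas substituted.
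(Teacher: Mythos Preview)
Your telescoping approach has a genuine gap at the step you flag as ``requiring care''. You claim that $(p_1+\dots+p_{i-1})+e_i$ is a positive contraction, justifying this by saying that the partial sums of the $e_j$ are contractions and that ``projectionization of the tail preserves this''. But projectionization (Lemma~\ref{lem:projectionization}) only guarantees that $P_{i-1}\coloneqq p_1+\dots+p_{i-1}$ agrees with $E_{i-1}\coloneqq e_1+\dots+e_{i-1}$ \emph{tracially} on the finite set $T$; it says nothing about $P_{i-1}$ as an operator. In particular, there is no reason for $e_i\le 1-P_{i-1}$. A simple obstruction: if $k=2$ and $e_1=e_2=\tfrac12 1_{A^\cU}$, then any projection $P_1$ with $\tau(P_1)=\tfrac12$ satisfies $\|P_1+e_2\|=\|P_1+\tfrac12 1_{A^\cU}\|=\tfrac32$, so Lemma~\ref{lem:projectionization} does not apply to $P_1+e_2$. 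More generally, Lemma~\ref{lem:projectionization} provides no order relation between its input and output, so one cannot manufacture a nested chain of projections this way.

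This is also not what \cite[Lemma~3.7]{CETWW} does; that proof (and the paper's) proceeds via a maximality argument rather than direct telescoping. One lets $\beta$ be the supremum of all $\alpha\in[0,1]$ for which orthogonal projections $p_1,\dots,p_k\in\kappa(D^\cU)\cap S'$ exist with $\tau(\sum p_i)=\alpha$ and $\tau(a_ip_i)\le\delta\tau(p_i)$, and shows $\beta=1$. If $\beta<1$, one applies weak CPoU with $q=1-\sum p_i^{(1)}$ (this is where the parameter $q$ in Definition~\ref{def:weakCPoU} is actually used, not just $q=1_{A^\cU}$), projectionizes the resulting $e_i^{(2)}$ \emph{individually} to projections $p_i^{(2)}$, and then --- this is the key trick you are missing --- uses a further partition of unity $q_1,\dots,q_k$ from uniform property~$\Gamma$ (Lemma~\ref{lemma:Gamma+}) to force orthogonality: setting $p_i\coloneqq p_i^{(1)}+qp_i^{(2)}q_i$ gives pairwise orthogonal projections because the $q_j$ are orthogonal and commute with everything. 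The price is that $\sum_i\tau(p_i)$ only reaches $\beta+\tfrac1k(1-\beta)$, not $1$, which is why the maximality argument is needed.
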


\begin{proof}
The proof follows closely the one of \cite[Lemma~3.7]{CETWW}, so we only sketch the main ideas.

Let $S\subseteq A^\cU$ be a $\| \cdot \|_{2,T_\cU(A)}$-separable subset that contains the canonical image of $A$ in $A^\cU$, let $a_1,\dots,a_k\in D_+$, and let $\delta > 0$ be such that
\begin{equation} 
\sup_{\tau \in T(A)} \min \{ \tau(a_1),\dots,\tau(a_k)  \} < \delta.
\end{equation} 
Consider the set $I$ of all $\alpha\in [0,1]$ such that there are pairwise orthogonal projections $p_1,\dots,p_k\in \kappa(D^\cU)\cap S'$ satisfying
\begin{equation} \label{eq:CPoUalpha} 
\tau\left( \sum_{i=1}^k p_i  \right) = \alpha \text{ and }
\tau(a_ip_i) \leq \delta \tau(p_i), \quad \tau\in T_\cU(A),\, i=1,\dots,k.
\end{equation}
Note that $I$ is nonempty since $0 \in I$.
The goal is to show that $\beta \coloneqq \sup I = 1$, which is enough because $I$ can be proved to be closed using Kirchberg's $\e$-test (see \cite[Lemma A.1]{Kir06}).

The claim that $\beta = 1$ follows from a maximality argument. That is, we assume $\beta  < 1$ and try to construct projections in the orthogonal remaining corner.
Then summing the two families would yield projections that take up more traces than $\beta$, which contradicts the assumption that $\beta$ is the supremum.

To make this work, 
let $p_1^{(1)}, \dots ,p_k^{(1)}$ be pairwise orthogonal projections  in $\kappa(D^\cU)\cap S'$ witnessing \eqref{eq:CPoUalpha} for $\beta$.
Set
\begin{equation} 
q\coloneqq 1_{A^\cU} -\sum_{i=1}^k p_i^{(1)} \in \kappa(D^\cU) \cap S'.
\end{equation} 
Note that $q \in A'$ because $A\subseteq S$.
Since $(D \subseteq A)$ has weak CPoU, we can find positive contractions $e_1^{(2)}, \dots , e_k^{(2)}$ in $\kappa(D^\cU) \cap (S\cup\{q\})'$ such that
\begin{equation}  
\sum_{i=1}^k e_i^{(2)} = 1_{A^\cU} \ \text{and}\ \tau(a_ie_i^{(2)}q)\leq \dl\tau(e_i^{(2)}q), \ \tau \in T_\cU(A), \,  i=1,\dots,k. 
\end{equation} 
Since $(D \subseteq A)$ is also assumed to satisfy uniform property $\Gamma$, by Lemma~\ref{lem:projectionization} there are projections
\begin{equation} 
p_1^{(2)},\dots,p_k^{(2)}\in \kappa(D^\cU) \cap (S \cup \{q\})'
\end{equation} 
satisfying 
\begin{equation}
\tau(p_i^{(2)}q) = \tau(e_i^{(2)}q) \text{ and } \tau(p_i^{(2)}qa_i)=\tau(e_i^{(2)}qa_i), \quad i =1, \dots, k.
\end{equation} 

Set
\begin{equation}
\begin{split}
S_1 &\coloneqq C^\ast(S \cup \{ q \} \cup \{a_i, p_i^{(1)}, p_i^{(2)} \}_{i =1}^k) \subseteq A^\cU, \quad\text{and} \\
T &\coloneqq C^\ast(\{q\} \cup \{ a_i, p_i^{(2)}\}_{i=1}^k) \subseteq \kappa(D^\cU).
\end{split}
\end{equation}

Apply Lemma~\ref{lemma:Gamma+} to obtain a partition of unity consisting of orthogonal projections $q_1,\dots,q_k\in \kappa(D^\cU) \cap S_1'$ such that
\begin{equation} 
 \tau(q_ia) = \frac1k \tau(a), \quad \tau \in T_\cU(A), \, a \in T, \, i = 1, \dots, k. 
\end{equation} 
Set
\begin{equation} p_i \coloneqq p_i^{(1)} + qp_i^{(2)}q_i \in \kappa(D^\cU) \cap S',\quad i=1,\dots,k.\end{equation} 

Then, as the calculations in \cite[Lemma~3.7]{CETWW} show, these are pairwise orthogonal projections satisfying
\begin{equation} 
\sum_{i=1}^k \tau(p_i)=\beta+\frac1k(1-\beta)>\beta \text{ and } \tau(a_ip_i)\leq \delta\tau(p_i),\ i = 1, \dots, k.
\end{equation} 
This contradicts maximality of $\beta$, as desired.
\end{proof}

\subsection{The small boundary property and CPoU}
\begin{prop}
\label{prop:CPoU_SBP}
Let $(D\subseteq A)$ be a unital sub-$C^\ast$-algebra with $D$ abelian, $A$ separable and $T(A)$ nonempty. If $(D\subseteq A)$ has CPoU, then $(D\subseteq A)$ has the small boundary property.
\end{prop}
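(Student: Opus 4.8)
The plan is to verify condition \ref{sbpitem3} of Proposition~\ref{prop:sbp} with $X$ the spectrum of $D$ and $K:=T(A)\vert_D$; here $X$ is a compact metric space since $A$ is separable, and $K$ is a weak$^\ast$ compact set of Borel probability measures on $X$ since $A$ is unital. So fix $\delta,\epsilon>0$; the goal is to produce finitely many pairwise disjoint open subsets of $X$ of diameter $\le\delta$ whose union has $\mu$-measure $>1-\epsilon$ for every $\mu\in K$. For a \emph{single} $\mu\in K$, a finite $\tfrac\delta3$-net of $X$ together with balls of a common radius in $[\tfrac\delta3,\tfrac\delta2)$ whose boundaries are all $\mu$-null (only countably many radii fail this for a fixed measure), disjointified, yields a finite pairwise disjoint family of open sets of diameter $<\delta$ with union $W_\mu$ of full $\mu$-measure. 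Since $\nu\mapsto\nu(W)$ is lower semicontinuous on $K$ for $W$ open and $W_\mu$ is exhausted from within by the functions $g^{(C)}_\mu:=\min\bigl(1,\,C\,d(\cdot,X\setminus W_\mu)\bigr)\in C(X)$ ($C\in\N$, each vanishing off $W_\mu$), the sets $\{\nu\in K:\nu(g^{(C)}_\mu)>1-\tfrac\epsilon4\}$ form an open cover of $K$ as $(\mu,C)$ vary; extracting a finite subcover gives $\mu_1,\dots,\mu_N\in K$, for each $i$ a finite pairwise disjoint family $\{U_{i,l}\}_l$ of open sets of diameter $<\delta$ with union $W_i:=W_{\mu_i}$, and $g_i\in C(X)$ with $0\le g_i\le1$ vanishing off $W_i$, such that $\min_i\nu(g_i)>1-\tfrac\epsilon4$ for all $\nu\in K$. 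Putting $a_i:=1_D-g_i\in D_+$, this reads $\sup_{\tau\in T(A)}\min_i\tau(a_i)\le\tfrac\epsilon4$.

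Next I would apply CPoU to $a_1,\dots,a_N$ with a tolerance $\delta_0\in(\tfrac\epsilon4,\tfrac\epsilon2)$, obtaining projections $p_1,\dots,p_N\in\kappa(D^\cU)$, automatically pairwise orthogonal because $\kappa(D^\cU)$ is abelian, with $\sum_i p_i=1_{A^\cU}$ and $\tau(a_ip_i)\le\delta_0\tau(p_i)$ for all $\tau\in T_\cU(A)$. To make this concrete, lift each $p_i$ to a sequence $(\hat q_i^{(n)})_n$ in $C(X)_+^1$ and replace it by $p_i^{(n)}:=\hat q_i^{(n)}\,\max\bigl(1_D,\sum_j\hat q_j^{(n)}\bigr)^{-1}$, which still represents $p_i$ but now satisfies $\sum_i p_i^{(n)}\le 1_D$ pointwise for \emph{every} $n$. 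Translating the relations $\sum_i p_i=1$, $p_i^2=p_i$ and $\tau(a_ip_i)\le\delta_0\tau(p_i)$ into $\|\cdot\|_{2,T(A)}$-smallness and uniform-over-$T(A)$ statements, all valid along $\cU$, I would fix a single index $n$ (and drop the superscript) so that, for a small $\eta>0$ to be chosen last: $\sum_ip_i\le1_D$, $\|\sum_ip_i-1\|_{2,T(A)}<\eta$, $\|p_i^2-p_i\|_{2,T(A)}<\eta$, and $\tau(a_ip_i)<\delta_0\tau(p_i)+\eta$ for all $\tau\in T(A)$.

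Finally, set $E_i:=\{x\in X:p_i(x)>\tfrac12\}$; these are open, and pairwise disjoint because $\sum_ip_i\le1_D$. From $\sum_ip_i^2\approx\sum_ip_i\approx1$ in $\|\cdot\|_{2,T(A)}$ one gets $\mu(\bigcup_iE_i)>1-\eta'$ for all $\mu\in K$, with $\eta'\to0$ as $\eta\to0$. On $E_i\setminus W_i$ we have $p_i>\tfrac12$ and $g_i=0$, hence $a_ip_i=p_i>\tfrac12$ there, so $\mu(E_i\setminus W_i)\le2\tau(a_ip_i)<2\delta_0\tau(p_i)+2\eta$ (with $\tau$ corresponding to $\mu$); summing over $i$ and using $\sum_ip_i\le1_D$ gives $\mu\bigl(\bigcup_i(E_i\setminus W_i)\bigr)\le2\delta_0+2N\eta$. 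The sets $E_i\cap U_{i,l}$, over all $i$ and $l$, are then pairwise disjoint (within a fixed $i$ because the $U_{i,l}$ are, across $i$ because the $E_i$ are), open, of diameter $<\delta$, and their union is $\bigcup_i(E_i\cap W_i)$, whose $\mu$-measure is at least $\mu(\bigcup_iE_i)-\mu\bigl(\bigcup_i(E_i\setminus W_i)\bigr)>1-\eta'-2\delta_0-2N\eta$. Choosing $\eta$ sufficiently small, once $N$ and $\delta_0<\tfrac\epsilon2$ have been fixed, makes this exceed $1-\epsilon$, and Proposition~\ref{prop:sbp} concludes.

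I expect the main obstacle to be exactly this last passage from the abstract ultrapower partition of unity to genuine disjoint open sets: the representing sequences are only \emph{approximately} orthogonal and approximately idempotent, and the obvious remedies — deleting overlaps, or intersecting with complements of previously chosen sets — reintroduce boundaries of uncontrolled measure, which is circular with the property being proved. The two devices that circumvent this are (a) the normalization forcing $\sum_ip_i^{(n)}\le1_D$ pointwise, which makes the super-level sets $E_i$ honestly disjoint, and (b) the small sets $W_i$ selected in advance, in which the estimate $\tau(a_ip_i)\le\delta_0\tau(p_i)$ forces each $p_i$ to concentrate.
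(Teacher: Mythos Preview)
Your proof is correct and follows the same strategy as the paper's: build, for each trace, a small-diameter disjoint almost-cover; use compactness of $T(A)$ to reduce to finitely many; apply CPoU to the defects $a_i$; lift to $C(X)$ and combine. The one substantive difference is in how you extract honestly disjoint open sets from the lift. The paper lifts the orthogonal projections $p_1,\dots,p_k$ directly to \emph{pairwise orthogonal} positive contractions $e_1,\dots,e_k\in C(X)$ (a standard lifting in the commutative setting), so that the products $e_i f$ with $f\in\Omega_{\tau_i}$ are automatically orthogonal and one simply takes their open supports; your normalization $\sum_i p_i^{(n)}\le 1$ together with the super-level sets $E_i=\{p_i>\tfrac12\}$ achieves the same disjointness by a more hands-on route, at the cost of the extra $\eta'$-estimate. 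Both work; the paper's version is slightly slicker, yours is more self-contained.

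One small slip: from the finite subcover you get, for each $\nu\in K$, \emph{some} $i$ with $\nu(g_i)>1-\tfrac\epsilon4$, i.e.\ $\max_i\nu(g_i)>1-\tfrac\epsilon4$ rather than $\min_i$; this is precisely what yields $\min_i\tau(a_i)<\tfrac\epsilon4$, so the conclusion you need for CPoU is unaffected.
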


\begin{proof}
Write $D=C(X)$, where the compact metric space $X$ denotes the spectrum of $D$. For a continuous function $f \in C(X)$ we denote the \emph{open support} of $f$ as
\begin{equation}  \suppo(f)\coloneqq f^{-1}(\mathbb C \setminus \{0\}).   \end{equation}  
For a trace $\tau\in T(A)$, the restriction $\tau\vert_{C(X)}$ induces a unique Borel probability measure on $X$ which we denote by $\mu_\tau$.

We shall verify the equivalent formulation of the small boundary property in Proposition~\ref{prop:sbp}.\ref{sbpitem3}, so let $\delta, \e > 0$ be given.
Fix $\tau \in T(A)$; by compactness, there is a finite Borel partition of $X$ consisting of sets of sufficiently small diameter and, by inner regularity of $\mu_\tau$, we obtain compact subsets of the sets in the partition so that the union of these has large enough measure $\mu_\tau$. Taking sufficiently small open neighborhoods of these compact sets and using Urysohn's lemma, it follows that there is a finite collection $\Omega_\tau\subseteq C(X)_+^1$ consisting of pairwise orthogonal positive continuous functions such that
\begin{enumerate}
	\item $\mathrm{diam}(\suppo f)  < \delta$ for all $f\in \Omega_\tau$, and
	\item $\tau\left( 1_{C(X)} - \sum_{f\in \Omega_\tau} f   \right)  < \e/3$.
\end{enumerate} 
Define 
\begin{equation} \label{eq:CPoUSBP.1}
g_\tau \coloneqq 1_{C(X)} - \sum_{f\in \Omega_\tau} f.
\end{equation}

 Since $T(A)$ is compact, there exist $\tau_1,\dots,\tau_k\in T(A)$ such that
\begin{equation} 
\sup_{\tau\in T(A)} \min\{ \tau(g_{\tau_1}),\dots,\tau(g_{\tau_k})  \} < \e/3.
\end{equation} 
Applying CPoU, we obtain pairwise orthogonal projections $p_1,\dots,p_k$ in $\kappa(C(X)^\cU)$ such that
\begin{equation} 
	  \sum_{i=1}^k p_i  = 1 \text{ and }
	 \tau(p_i g_{\tau_i} ) \leq \frac\e3 \tau(p_i), \ \tau\in T_\cU(A), \,i=1,\dots,k.
\end{equation} 

We may lift the projections $p_1,\dots, p_k$ to pairwise orthogonal elements in $\ell^\infty(\mathbb N, C(X))_+^1$, and by taking elements at a sufficiently late stage in this sequence, we obtain pairwise orthogonal positive contractions $e_1,\dots,e_k$ in $C(X)_+^1$ such that, for every $\tau\in T(A)$ and $i=1,\dots,k$
\begin{equation}
\label{eq:CPoUSBP.2}
1 - \frac{\e}{3} < \tau\left( \sum_{i=1}^k e_i    \right) \le 1,
\end{equation}
and
\begin{equation}
\label{eq:CPoUSBP.3}
\tau(e_i g_{\tau_i} ) \leq \frac\e3 \tau(e_i) + \frac{\e}{3k}.
\end{equation}

Consider the finite collection of pairwise orthogonal contractions in $C(X)$ given by
\begin{equation}
\Omega \coloneqq \{ e_if \colon i=1,\dots,k, \, f\in \Omega_{\tau_i}  \}\subseteq C(X)_+^1.
\end{equation}
For every $\tau\in T(A)$ one has
\begin{equation}
\begin{array}{rcl}
\hspace*{-2em} \tau\left(1_A - \sum_{i=1}^k \sum_{f\in \Omega_{\tau_i}} e_if  \right) 
&\stackrel{{\eqref{eq:CPoUSBP.2}}}{<}& \tau \left(  \sum_{i=1}^k e_i \left(  1_A - \sum_{f\in \Omega_{\tau_i}} f  \right)    \right) + \frac{\e}{3} \\
&\stackrel{{\eqref{eq:CPoUSBP.1}}}=& \tau\left( \sum_{i=1}^k e_ig_{\tau_i}   \right) + \frac{\e}{3} \\
&\stackrel{{\eqref{eq:CPoUSBP.3}}}\leq& \sum_{i=1}^k \frac\e3 \cdot \tau(e_i) + k\cdot \frac{\e}{3k} + \frac{\e}{3} \\
&\stackrel{{\eqref{eq:CPoUSBP.2}}}{\leq}& \frac\e3 + \frac{\e}{3} + \frac{\e}{3} \\ &=& \e.
\end{array} 
\end{equation} 
Now the finite collection of disjoint open sets
\begin{equation}
\mathcal{I} \coloneqq \{  \suppo(e_if) \colon i=1,\dots,k, \, f\in \Omega_{\tau_i}  \}
\end{equation}
consists of sets whose diameters are uniformly bounded by $\delta$, because each is a subset of some $\suppo(f)$ where $f$ is in some $\Omega_{\tau}$. Finally, for $\tau \in T(A)$, one has
\begin{equation} 
\mu_\tau\left( \bigsqcup_{U\in \mathcal{I}} U \right) \geq \tau \left(  \sum_{i=1}^k \sum_{f\in \Omega_{\tau_i}} e_if   \right) > 1- \e,
\end{equation} 
so the small boundary property of $(D\subseteq A)$ follows from Proposition~\ref{prop:sbp}.\ref{sbpitem3}.
\end{proof}

\section{Main result and consequences} \label{sec:Main}
In this section we prove Theorem~\ref{thm:GammaSBP} and derive from it Theorem~\ref{thm:AlmostFinite}. 
The following result establishes weak CPoU for all sub-$C^\ast$-algebras originating from an action of a discrete amenable group on an abelian $C^\ast$-algebra.
\begin{thm}
\label{thm:MainWeakCPoU}
Let $X$ be a compact metric space and let $\alpha \colon G \curvearrowright X$ be an action of a countable discrete amenable group $G$ on $X$.
Then the sub-$C^\ast$-algebra $(C(X)\subseteq C(X)\rtimes_\alpha G)$ has weak CPoU.
\end{thm}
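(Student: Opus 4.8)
The plan is to reduce the verification of weak CPoU to the auxiliary result Proposition~\ref{prop:wcpou}, which supplies the positive contractions $e_1,\dots,e_k$ at the level of $A=C(X)\rtimes_\alpha G$, and then to push these elements into $C(X)$ using a F{\o}lner-set averaging argument. Fix the data: a $\|\cdot\|_{2,T_\cU(A)}$-separable subset $S\subseteq A^\cU$, elements $a_1,\dots,a_k\in C(X)_+$ and $\delta>0$ with $\sup_{\tau\in T(A)}\min_i\tau(a_i)<\delta$, and a positive contraction $q\in\kappa(C(X)^\cU)\cap A'$ with $\tau(q)>0$ for all $\tau\in T_\cU(A)$. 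First I would reduce to a countable cofinal family of finite subsets $\mathcal F_m\subseteq A$, tolerances $\e_m\to 0$, and indices $n_m\to\infty$ encoding the separability of $S$ and $q$; lift $q$ to a sequence $(q^{(m)})$ in $C(X)_+^1$ and fix a countable approximate unit $(h_n)$ of $A$. For each $m$, apply Proposition~\ref{prop:wcpou} with $q^{(m)}$, the finite set $\mathcal F_m$, tolerance $\e_m$, and $n_0=n_m$ (the hypothesis \eqref{eq:q} holds in the limit since $\tau(a_iq^{(m)})\to\tau(a_i)\tau(q)<\delta\tau(q)$ along $\cU$ for limit traces, and a routine quantifier-juggling argument passes this to a genuine pointwise inequality for $\cU$-many $m$, which is all we need) to obtain $e_1^{(m)},\dots,e_k^{(m)}\in A_+^1$ summing approximately to $h_{n}$ for some $n\ge n_m$, almost commuting with $\mathcal F_m$, and satisfying the tracial estimate up to $\e_m$. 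Setting $\bar e_i:=(e_i^{(m)})_m\in A^\cU$ gives positive contractions in $A^\cU\cap S'$ (by the choice of $\mathcal F_m$ and $n_m$) with $\sum_i\bar e_i=1_{A^\cU}$ and $\tau(a_i\bar e_i q)\le\delta\tau(\bar e_i q)$ for all $\tau\in T_\cU(A)$.

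The main obstacle is that these $\bar e_i$ live in $A^\cU$, not in $\kappa(C(X)^\cU)$, and weak CPoU demands elements of $\kappa(C(X)^\cU)\cap S'$. This is exactly where amenability of $G$ enters. I would apply the canonical conditional expectation $E\colon A\to C(X)$ — which is trace-preserving, i.e.\ $\tau\circ E=\tau$ on $C(X)$ for every $\tau\in T(A)$ with $\tau|_{C(X)}$ invariant (Remark~\ref{rmk:sbp of crossed product implies usual sbp}) — but $E$ is not multiplicative, so one cannot directly conclude that $E(\bar e_i)$ still (approximately) commutes with $S$ and sums to the unit up to the needed control. The standard fix is a F{\o}lner averaging: for a F{\o}lner set $F\subseteq G$ one considers $\frac{1}{|F|}\sum_{g\in F}\alpha_g^\cU(\bar e_i)$; since $\bar e_i$ almost commutes with the canonical unitaries $u_g$ for $g$ in a prescribed finite set (arrange this by enlarging each $\mathcal F_m$ to include finitely many $u_g$'s), the F{\o}lner average is, in $\|\cdot\|_{2,T_\cU(A)}$, close to its image under $E$, which lands in $\kappa(C(X)^\cU)$. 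The quantities $\sum_i(\cdot)$, $\tau(a_i(\cdot)q)$ and $\tau((\cdot)q)$ are all unchanged by $\alpha_g^\cU$ on limit traces (invariance), are convex, and hence are preserved by averaging; commutation with $S$ is preserved because every element of $S$ has representatives essentially supported on finitely many group elements, up to $\|\cdot\|_{2,T_\cU(A)}$-small error, and $\alpha_g^\cU$ acts isometrically. So the F{\o}lner-averaged, then $E$-pushed, elements $e_i:=E\big(\frac{1}{|F|}\sum_{g\in F}\alpha_g^\cU(\bar e_i)\big)\in\kappa(C(X)^\cU)\cap S'$ are the desired positive contractions.

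Concretely, I would carry out the steps in this order: (1) set up the countable/separability bookkeeping and lift $q$; (2) invoke Proposition~\ref{prop:wcpou} termwise and assemble $\bar e_i\in A^\cU\cap S'$ with $\sum\bar e_i=1$ and the tracial inequality; (3) using amenability, choose F{\o}lner sets $F$ (within the reindexing) so that the average $\hat e_i:=\frac{1}{|F|}\sum_{g\in F}\alpha_g^\cU(\bar e_i)$ almost commutes with the relevant unitaries, hence $\|\hat e_i-E(\hat e_i)\|_{2,T_\cU(A)}$ is negligible; (4) verify that $e_i:=E(\hat e_i)$ lies in $\kappa(C(X)^\cU)\cap S'$, that $\sum_i e_i=1_{A^\cU}$, and that $\tau(a_i e_i q)\le\delta\tau(e_i q)$ for all limit traces $\tau$, using trace-invariance and the fact that $a_i,q\in\kappa(C(X)^\cU)$ interact well with $E$ and with $\alpha^\cU$. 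The delicate point throughout is keeping all three error terms — the approximate-unit defect from item~\ref{item1:unit}, the commutator defects, and the $\e_m$ in the tracial estimate — simultaneously under control through the averaging and reindexing; this is routine but must be set up carefully, and it is the part where I expect to spend the most effort.
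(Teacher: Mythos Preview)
Your plan has a genuine gap in step~(3). The action $\alpha_g^\cU$ on $A^\cU$ (with $A=C(X)\rtimes_\alpha G$) is conjugation by $u_g\in A$, hence inner. Since your $\bar e_i$ already lie in $A^\cU\cap A'$, they commute with each $u_g$, so $\alpha_g^\cU(\bar e_i)=u_g\bar e_i u_g^*=\bar e_i$ and the F{\o}lner average $\hat e_i$ equals $\bar e_i$ on the nose --- the averaging is vacuous. You are then left with the bare assertion $\|\bar e_i - E(\bar e_i)\|_{2,T_\cU(A)}\approx 0$, which does not follow from (even exact) commutation with the $u_g$'s: an element of $A^\cU\cap A'$ need not lie anywhere near $\kappa(C(X)^\cU)$. (Writing a representative of $\bar e_i$ as $\sum_h c_h u_h$, commutation with $u_g$ forces $c_h=\alpha_g(c_{g^{-1}hg})$ but does not kill the coefficients with $h\neq e$.) The same objection applies if one averages at the level of representing sequences before passing to the ultrapower.

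The paper's proof avoids this by never invoking Proposition~\ref{prop:wcpou} or the expectation $E$: it works directly in $C(X)$ throughout. For each $m$ one finds a $(K_m,\eta_m)$-invariant F{\o}lner set $H_m$ so large that the \emph{pointwise} inequality $\min_i (a_iq_m)^{(H_m)}(x)<\delta\, q_m^{(H_m)}(x)$ holds for every $x\in X$; existence of such $H_m$ for $\cU$-many $m$ is the substantial step, proved by contradiction via weak$^*$-limits of empirical averages along a F{\o}lner sequence, which produce an invariant measure violating the hypothesis. The resulting open cover of $X$ admits a subordinate partition of unity $f_{1,m},\dots,f_{k,m}\in C(X)$, and one sets $e_{i,m}:=f_{i,m}^{(H_m)}\in C(X)$. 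Here F{\o}lner averaging is applied to functions already in $C(X)$, where it genuinely manufactures approximate commutation with the $u_g$'s, and $G$-invariance of traces transfers the pointwise inequality to the tracial one. A version of your push-through-$E$ idea \emph{does} work and is carried out separately as Theorem~\ref{thm:weak-cpou-cartan-pairs}, but it relies on the almost extension property --- specifically the identity $vE(a)v^*=E(vav^*)$ for normalizers $v$ --- to control $[E(f_{i,m}),b]$ for $b\in\mathcal{N}_A(D)$; no F{\o}lner averaging in $A$ is involved there either, and that route does not cover non-free actions, which Theorem~\ref{thm:MainWeakCPoU} allows.
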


\begin{proof} 
Set $D\coloneqq C(X)$, $A\coloneqq C(X) \rtimes_\alpha G$. We use $\alpha$ to also denote the canonically induced action $G \curvearrowright D$, and let $\{u_g\}_{g\in G} \subseteq A$ denote the unitary copy of $G$ in the crossed product that spatially implements $\alpha$, i.e.\ $\alpha_g(f)=u_gfu_g^*$ for all $f\in D$ and $g\in G$. For every $h\in D$ and nonempty finite subset $F \subseteq G$, we establish the notation
\begin{equation}
h^{(F)}\coloneqq \frac{1}{|F|}\sum_{g\in F}\alpha_g(h).
\end{equation}
Note that, whenever $h$ is positive or a contraction, then so is $h^{(F)}$.

To verify weak CPoU of $(D\subseteq A)$, first note that the sub-$C^\ast$-algebra $(\kappa(D^\cU)\subseteq A^\cU)$ is unital, since $1_A\in D$. Let $a_1,\dots,a_k\in D_+$ and $\delta>0$ be such that
\begin{equation} \label{eq:wcpou1}
\sup_{\tau \in T(A)} \min \{\tau(a_1), \dots, \tau(a_k) \} < \delta.
\end{equation}
Let $q\in\kappa(D^\cU)\cap A'$ be a positive contraction with $\tau(q)>0$ for all $\tau\in T_\cU(A)$, and let $S \subseteq A^\cU$ be a $\|\cdot\|_{2,T_\cU(A)}$-separable subset, with $\{s_\ell\}_{\ell = 1}^\infty \subseteq S$ a $\|\cdot\|_{2,T_\cU(A)}$-dense subset. For each $\ell\in\mathbb{N}$, let $(s_{\ell,m})_{m=1}^\infty\in\ell^\infty(\mathbb{N},A)$ be a representative sequence for $s_\ell$. 

Since $A$ is generated as a $C^\ast$-algebra by $D$ and $\{u_g\}_{g\in G}$, and since $D$ is abelian, for any $m\in\mathbb{N}$, there is $\eta_m>0$ and a finite set $K_m \subset G$ with the following property: for any contraction $f\in D$, if $\|[f, u_g]\|_{2, T(A)}<\eta_m$ for all $g\in K_m$, then $\|[f,s_{\ell,m}]\|_{2,T(A)}<1/m$ for all $\ell = 1,\dots,m$.

Fix a lift $(q_m)_{m=1}^\infty\in\ell^\infty(\mathbb{N},D)_+^1$ of $q$. Consider the set $\Omega\subseteq\mathbb{N}$ of all $m\in\mathbb{N}$ such that there exists a finite, symmetric set $H_m \subseteq G$ that is $(K_m,\eta_m)$-invariant (i.e.\ $\frac{|K_m H_m \triangle H_m|}{|H_m|} < \eta_m$) and such that
\begin{equation}
\min_{1\le i \le k}(a_iq_m)^{(H_m)}(x)<\delta q_m^{(H_m)}(x),\quad x \in X.
\end{equation}

\begin{claim}
The set $\Omega$ belongs to the ultrafilter $\cU$.
\end{claim}
\renewcommand\qedsymbol{$\blacksquare$}
\begin{proof}[Proof of the claim]
The proof follows a standard argument (see for instance \cite[Proposition~3.3]{KerrSzabo}).
Suppose that the claim is false, in order to reach a contradiction. Since $\cU$ is an ultrafilter, the complement $\mathbb{N}\setminus\Omega$ lies in $\cU$.

Fix $m\in\mathbb{N}\setminus \Omega$. Since $G$ is amenable, let $(F_n)_{n=1}^\infty$ be a F{\o}lner sequence of $G$ consisting of finite, symmetric sets that are all $(K_{m},\eta_{m})$-invariant. Since $m\not\in\Omega$, for each $n\in\mathbb{N}$, there exists a point $x_n\in X$ such that
\begin{equation}
\min_{1\le i \le k}(a_iq_{m})^{(F_n)}(x_n)\ge\delta q_m^{(F_n)}(x_n).
\end{equation}
Let $\sigma_n$ be the state on $D$ defined as
\begin{equation}
\sigma_n(f)\coloneqq \frac{1}{|F_n|}\sum_{g\in F_n}\alpha_g(f)(x_n),\quad f\in D
\end{equation}
and note that, for each $n\in\mathbb{N}$, we have
\begin{equation}\label{eq:intermediate1}
\min_{1\le i\le k}\sigma_n(a_iq_m)=\min_{1\le i \le k}(a_iq_m)^{(F_n)}(x_n)\ge\delta q_m^{(F_n)}(x_n)=\delta\sigma_n(q_m).
\end{equation}

Using weak$^*$ compactness of the state space $S(D)$ of $D$, by passing to a subsequence, we may assume that $(\sigma_n)_{n=1}^\infty$ converges to some $\tau_m \in S(D)$.
Then, by construction, $\tau_m$ is $G$-invariant, and therefore is in $T(A)\vert_D$, since it extends to $A$ as $\tau_m \circ E$, where $E \colon A \to D$ is the canonical conditional expectation. By \eqref{eq:intermediate1}, we obtain
\begin{equation}\label{eq:intermediate2}
\min_{1\le i \le k}\tau_m(a_iq_m)\ge\delta\tau_m(q_m).
\end{equation}
This provides a trace $\tau_m\in T(A)$ for each $m\in\mathbb N\setminus \Omega$.
Choose $\tau_m\in T(A)$ aribtrarily for $m\in \Omega$, and use the sequence $(\tau_m)$ to define a limit trace $\tau\in T_\cU(A)$.
Then, since $\mathbb{N}\setminus\Omega\in\cU$ by assumption, and by \eqref{eq:intermediate2}, we have that
\begin{equation}\label{eq:intermediate3}
\min_{1\le i\le k}\tau(a_iq)\ge\delta\tau(q).
\end{equation}

However, $q\in\kappa(D^\cU)\cap A'$ and $\tau(q)>0$ imply that the map $A\to\mathbb{C}$ given by $a\mapsto\tau(q)^{-1}\tau(a q)$ is in $T(A)$, and so \eqref{eq:intermediate3} violates \eqref{eq:wcpou1}, leading to a contradiction. 
This proves the claim.
\end{proof}

\renewcommand\qedsymbol{$\square$}

For each $m\in\Omega$, choose a finite, symmetric set $H_m \subset G$ that is $(K_m,\eta_m)$-invariant, and such that $\min_{1\le i\le k}(a_iq_m)^{(H_m)}(x)< \delta q_m^{(H_m)}(x)$ for all $x \in X$. Consider the sets
\begin{equation}\label{eq:opencover}
U_{i,m}\coloneqq\left\{x\in X:(a_iq_m)^{(H_m)}(x)<\delta q_m^{(H_m)}(x)\right\}\subseteq X,\quad i=1,\dots,k,
\end{equation}
which form an open cover of $X$, and let $f_{1,m},\dots, f_{k,m}\in D_+^1$ be a partition of unity for $X$ subordinate to the cover of \eqref{eq:opencover}. Set
\begin{equation}\label{eq:contractionsaveraged}
e_{i,m}\coloneqq f_{i,m}^{(H_m)}\in D_+^1,\quad i=1,\dots,k.
\end{equation}
For $m\in\mathbb{N}\setminus\Omega$ and $1\le i\le k$ set $e_{i,m}\coloneqq 0$ and  let $e_i\in\kappa(D^\cU)_+^1$ be the class of $(e_{i,m})_{m=1}^\infty$. 

It is clear that $\sum_{i=1}^ke_i=1_{A^\cU}$. To verify that each $e_i$ commutes with $S$, it suffices to verify commutation with each $s_\ell$. Fix $\ell\in\mathbb{N}$. Note that for any $m\in\Omega$ with $m\ge \ell$ and any $g\in K_m$, we have that 
\begin{equation}
\begin{array}{rcl}
 \|[e_{i,m},u_g]\| 
&\stackrel{\eqref{eq:contractionsaveraged}}{\le}& \frac{1}{|H_m|}\cdot \left\|\sum_{t\in gH_m}\alpha_t(f_{i,m})-\sum_{r\in H_m}\alpha_r(f_{i,m})\right\| \\
&\le& \frac{|K_mH_m\triangle H_m|}{|H_m|}\\ &<& \eta_m.
\end{array}
\end{equation}
As $\|[e_{i,m},u_g]\|_{2,T(A)}  \le \|[e_{i,m},u_g]\| < \eta_m$, we therefore get\
\begin{equation}
\|[e_{i,m},s_{\ell,m}]\|_{2,T(A)}<1/m.
\end{equation}
We conclude that each $e_i$ commutes with $s_\ell$.

Since traces in $A$ restrict to invariant measures on the spectrum of $D$, given $a,b \in D$, $g \in G$ and $\tau \in T(A)$, we have that $\tau(a\alpha_g(b)) = \tau(\alpha_{g^{-1}}(a)b)$, so if $H \subseteq G$ is finite and symmetric, this implies in turn $\tau(ab^{(H)}) = \tau(a^{(H)}b)$. We can use this to see that, for any $\tau\in T(A)$, any $m\in\Omega$ and any $i=1,\dots,k$, the following holds
\begin{equation}
\begin{array}{rcl}
\tau(a_ie_{i,m}q_m) &\stackrel{\eqref{eq:contractionsaveraged}}{=}& \tau\left(a_i q_m\cdot f_{i,m}^{(H_m)}\right) \\
& =  & \tau \left( (a_i q_m)^{(H_m)}f_{i,m}\right) \\
& \stackrel{\eqref{eq:opencover}}{\le} & \delta \tau\left( q_m^{(H_m)}f_{i,m}\right) \\
& = & \delta \tau\left(q_m f_{i,m}^{(H_m)}\right) \\
& \stackrel{\eqref{eq:contractionsaveraged}}{=} &\delta \tau(q_me_{i,m}).
\end{array} 
\end{equation} 
and, since $\Omega\in\cU$, this shows that $\tau(a_ie_iq)\le\delta\tau(e_iq)$ for all $\tau\in T_\cU(A)$, and all $i=1,\dots,k$.
\end{proof}

In the following result we show that weak CPoU holds automatically also in the case of Cartan pairs (see \cite[Definition~5.1]{Renault:IMSB}). Before stating the theorem we recall some definitions. Given a sub-$C^\ast$-algebra $(D \subseteq A)$, the set of \emph{normalizers} is defined as
\begin{equation} \label{eq:normalizers}
\mathcal{N}_A(D) \coloneqq \{ a \in A \colon a^* D a + aDa^* \subseteq D \}
\end{equation}
We say that $(D \subseteq A)$ is \emph{regular} if the $C^\ast$-algebra generated by $\mathcal{N}_A(D)$ is $A$ and we say that  $(D \subseteq A)$
has the \emph{almost extension property} if there is a weak$^*$-dense set of pure states of $D$ that has unique extension to $A$. A sub-$C^\ast$-algebra
$(D \subseteq A)$ is a \emph{Cartan pair} if $D$ is a maximal abelian subalgebra, $(D \subseteq A)$ is regular, there exists a faithful conditional expectation $E \colon A \to D$, and $D$ is non-degenerate, i.e., it contains an approximate unit of $A$.

\begin{thm}\label{thm:weak-cpou-cartan-pairs}
Let $(D\subseteq A)$ be a non-degenerate sub-$C^\ast$-algebra such that $A$ is separable and $T(A)$ is nonempty and compact. Suppose moreover that
\begin{enumerate}
\item $D$ is abelian,
\item $(D\subseteq A)$ is regular,
\item $(D\subseteq A)$ has the almost extension property,
\item \label{weak-cpou:item4} there is a conditional expectation $E\colon A\to D$.
\end{enumerate}
Then $(D\subseteq A)$ has weak CPoU. In particular, if $(D\subseteq A)$ is a Cartan pair, then $(D\subseteq A)$ has weak CPoU.
\end{thm}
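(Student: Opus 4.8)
The plan is to mimic the structure of the proof of Theorem~\ref{thm:MainWeakCPoU}, but replace the averaging over F\o lner sets by an argument that uses the conditional expectation $E\colon A\to D$ to transport a partition of unity obtained \emph{inside $A$} down to $D$. The point of departure is Proposition~\ref{prop:wcpou} applied to $A$ itself: given $a_1,\dots,a_k\in D_+$ with $\sup_{\tau\in T(A)}\min_i\tau(a_i)<\delta$, a positive contraction $q\in\kappa(D^\cU)\cap A'$ with $\tau(q)>0$ on $T_\cU(A)$, and a $\|\cdot\|_{2,T_\cU(A)}$-separable $S\subseteq A^\cU$, we first lift $q$ to a representative $(q_m)\in\ell^\infty(\N,A)_+^1$, choose a countable $\|\cdot\|_{2,T_\cU(A)}$-dense subset $\{s_\ell\}$ of $S$ with representatives $(s_{\ell,m})$, and for each $m$ apply Proposition~\ref{prop:wcpou} with finite set $\mathcal F=\{s_{1,m},\dots,s_{m,m}\}$, tolerance $\e=1/m$, an approximate unit $(h_n)$ of $A$ lying in $D$ (which exists by nondegeneracy), and the element $q_m$ (the remark at the end of that proposition lets us use the hypothesis in the form $\min_i\tau(a_iq_m)<\delta\tau(q_m)$, which holds up to a vanishing error since $(q_m)$ represents $q$). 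This yields, for each $m$, positive contractions $e'_{1,m},\dots,e'_{k,m}\in A_+^1$ and $n_m\ge m$ with $\|\sum_i e'_{i,m}-h_{n_m}\|_{2,T(A)}<1/m$, approximate commutation with $\mathcal F$, and $\tau(a_ie'_{i,m}q_m)<\delta\tau(e'_{i,m}q_m)+1/m$ for all $\tau\in T(A)$.

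Next I would push these elements into $D$. Set $e_{i,m}\coloneqq E(e'_{i,m})\in D_+^1$. Since $E$ is a conditional expectation onto $D$ and $a_i,q_m\in D$, we have $\tau(a_ie'_{i,m}q_m)=\tau\big(E(a_ie'_{i,m}q_m)\big)=\tau(a_iE(e'_{i,m})q_m)=\tau(a_ie_{i,m}q_m)$ for every trace $\tau\in T(A)$ (using that traces restrict through $E$ only insofar as $E$ is a module map — actually one only needs $\tau\circ E$... here I use that $a_i,q_m\in D$ sit on either side and $E$ is a $D$-bimodule map, together with the trace property, so $\tau(xe'y)=\tau(E(xe'y))=\tau(xE(e')y)$ whenever $x,y\in D$), and similarly $\tau(a_ie_{i,m}q_m)=\tau(e_{i,m}q_m\cdot a_i)$-type identities give $\tau(a_ie_iq)\le\delta\tau(e_iq)$ in the limit. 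The sum $\sum_i e_{i,m}=E(\sum_i e'_{i,m})$ satisfies $\|\sum_i e_{i,m}-h_{n_m}\|_{2,T(A)}=\|E(\sum_i e'_{i,m}-h_{n_m})\|_{2,T(A)}\le\|\sum_i e'_{i,m}-h_{n_m}\|_{2,T(A)}<1/m$ because $E$ is contractive for $\|\cdot\|_{2,T(A)}$ (as $\tau\circ E$ need not be a trace, one instead uses the Cauchy--Schwarz/Kadison inequality $E(x)^*E(x)\le E(x^*x)$ valid for conditional expectations, so $\tau(E(x)^*E(x))\le\tau(E(x^*x))=\tau(x^*x)$ when $\tau\in T(A)$ and... hmm, $\tau\circ E$ is positive so $\tau(E(x^*x))$ makes sense, but equals $\tau(x^*x)$ only if $\tau=\tau\circ E$). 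This last point is where the hypotheses must be used more carefully — see below. Letting $e_i\in\kappa(D^\cU)_+^1$ be the class of $(e_{i,m})_m$, one then checks $\sum_i e_i=1_{A^\cU}$ (since $(h_{n_m})$ represents $1_{A^\cU}$, each $h_{n_m}$ being part of an approximate unit of $A$), that $e_i$ commutes with each $s_\ell$ (from the approximate commutation with $\mathcal F$ at stages $m\ge\ell$, combined with $\|[E(e'_{i,m}),s_{\ell,m}]\|_{2,T(A)}\le\|[e'_{i,m},s_{\ell,m}]\|_{2,T(A)}$ — again needing $\|\cdot\|_{2,T(A)}$-contractivity of $E$ together with the trace property to handle $\|E(e')s-sE(e')\|_{2,T(A)}$, and here one genuinely uses that $s_{\ell,m}$ can be chosen as normalizers, since $D$ is regular, so that $E(e's)=E(e)s$ is false in general but $E(e'ss^*e')$ can be controlled... ), and finally the trace estimate $\tau(a_ie_iq)\le\delta\tau(e_iq)$ for all $\tau\in T_\cU(A)$.

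The main obstacle I anticipate is exactly the interaction between $E$ and the seminorm $\|\cdot\|_{2,T(A)}$ and commutators: unlike a trace-preserving expectation, a general conditional expectation $E$ need not be $\|\cdot\|_{2,T(A)}$-contractive, and $E$ does not commute past the separable set $S$. This is precisely why the hypotheses of regularity and the almost extension property (beyond mere existence of $E$) are imposed. The right way through is presumably: (i) reduce to the case where the elements $s_\ell$ of $S$ are (products of) normalizers in $\cn_A(D)$, using regularity; (ii) observe that for a normalizer $v$ and $d\in D$ one has $E(v^*dv)=v^*dv$-type behaviour, more precisely $v^*Dv\subseteq D$ and $E(vdv^*)=vdv^*$ under suitable normalization, so that conjugation by $v$ preserves $D$ and interacts well with $E$; and (iii) use the almost extension property to guarantee that the restriction map $T(A)\to T(A)|_D$ is sufficiently rich — concretely, that a positive element $d\in D$ with $\tau|_D(d)$ small for all $\tau$ is genuinely small, i.e.\ that one can detect $\|d\|_{2,T(A)}$ through traces that factor through $E$. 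Making (iii) precise — showing $\|E(x)\|_{2,T(A)}\le\|x\|_{2,T(A)}$, or an adequate substitute, using that pure states of $D$ with unique extension are weak$^*$-dense — is the crux, and I would expect the argument to proceed by approximating a general trace on $A$ in the weak$^*$ topology by convex combinations built from such uniquely-extending pure states, for which $\tau=\tau\circ E$ automatically holds, and then invoking lower semicontinuity of $\|\cdot\|_{2,T(A)}$. Once this contractivity-type statement for $E$ is established, the rest is a routine reindexing exactly parallel to Theorem~\ref{thm:MainWeakCPoU}. The final sentence about Cartan pairs is then immediate, since a Cartan pair satisfies (i)--(iv) by definition (maximal abelian $D$ giving (1), regularity (2), the almost extension property (3) being a known consequence of the Cartan axioms, and the faithful conditional expectation (4)).
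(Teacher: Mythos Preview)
Your overall architecture matches the paper's: apply Proposition~\ref{prop:wcpou} in $A$, push the resulting $f_{i,m}$ down to $e_{i,m}\coloneqq E(f_{i,m})\in D$, and verify the three conditions. You also correctly locate the two pressure points: contractivity of $E$ in $\|\cdot\|_{2,T(A)}$, and survival of approximate commutation under $E$. But the mechanisms you propose for both are off.

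For (iii), your plan to approximate a trace $\tau\in T(A)$ by convex combinations of uniquely-extending pure states does not work: those extensions are pure \emph{states} of $A$, not traces, so they do not sit inside $T(A)$ and cannot be used to control $\|\cdot\|_{2,T(A)}$. The paper instead invokes the single fact that, under regularity and the almost extension property, $\tau\circ E\in T(A)$ for every $\tau\in T(A)$ (this is \cite[Corollary~3.3]{crytser2017traces}). From this, contractivity is immediate via the Kadison--Schwarz inequality $E(x)^*E(x)\le E(x^*x)$:
\[
\tau(E(x)^*E(x))\le\tau(E(x^*x))=(\tau\circ E)(x^*x)\le\|x\|_{2,T(A)}^2.
\]
The same fact resolves the trace inequality cleanly: one does \emph{not} need $\tau=\tau\circ E$, only that $\tau\circ E$ is itself a trace, so that the inequality $\sigma(a_if_{i,m}q_m)<\delta\sigma(f_{i,m}q_m)+1/m$ from Proposition~\ref{prop:wcpou} can be applied with $\sigma=\tau\circ E$, and then the $D$-bimodularity of $E$ converts this back to $\tau(a_ie_{i,m}q_m)<\delta\tau(e_{i,m}q_m)+1/m$.

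For the commutators, your identity in (ii) is too weak: $E(vdv^*)=vdv^*$ for $d\in D$ is just the statement that $v$ normalizes $D$. What the paper actually uses is the stronger
\[
vE(a)v^*=E(vav^*),\qquad a\in A,\ v\in\mathcal N_A(D),
\]
which is again a consequence of the almost extension property (\cite[Lemma~3.2]{crytser2017traces}). The order of operations also matters: the paper first uses regularity to replace the finite set $\{s_{1,m},\dots,s_{m,m}\}$ by a finite set $\mathcal F_m\subseteq\mathcal N_A(D)$ (so that $\eta_m$-commutation with $\mathcal F_m$ forces $1/m$-commutation with the $s_{\ell,m}$), \emph{then} applies Proposition~\ref{prop:wcpou} with $\mathcal F_m\cup\mathcal F_m^*$, and finally shows directly that $\|[E(f_{i,m}),b]\|_{2,T(A)}<\eta_m$ for $b\in\mathcal F_m$ via an explicit estimate: one expands $\tau([E(f),b]^*[E(f),b])$ into two terms, rewrites each using the identity above and $D$-bimodularity of $E$ as $(\tau\circ E)(b^*E(f)[f,b])$ (and its adjoint analogue), and bounds by Cauchy--Schwarz in the trace $\tau\circ E$. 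Your hoped-for inequality $\|[E(e'),s]\|_{2,T(A)}\le\|[e',s]\|_{2,T(A)}$ for general $s\in A$ is false and cannot be rescued without passing through normalizers first.
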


\begin{proof}

The sub-$C^\ast$-algebra $\kappa(D^\cU)\subseteq A^\cU$ is unital, since $(D\subseteq A)$ is non-degenerate. To verify that $(D \subseteq A)$ has weak CPoU, let $a_1,\dots,a_k\in D_+$, and let $\delta>0$ be such that
\begin{equation}\label{eq:wcpoudiag}
\sup_{\tau \in T(A)} \min \{\tau(a_1), \dots, \tau(a_k) \} < \delta.
\end{equation}
Let $S\subseteq A^\cU$ be a $\|\cdot\|_{2,T_\cU(A)}$-separable subset, and let $q\in\kappa(D^\cU)\cap A'$ be a positive contraction with $\tau(q)>0$ for all $\tau\in T_\cU(A)$. Note that for any $\tau\in T_\cU(A)$, since $q$ commutes with $A$ and $\tau(q)>0$, the map $A\to\mathbb{C}$ given by $a\mapsto\tau(q)^{-1}\tau(aq)$ is in $T(A)$ and so \eqref{eq:wcpoudiag} yields that
\begin{equation}\label{eq:wcpoulimit}
\min_{1\le i\le k}\tau(a_iq)<\delta\tau(q), \quad\tau\in T_\cU(A).
\end{equation} 

Let $(q_m)_{m=1}^\infty\in\ell^\infty(\mathbb{N},D)_+^1$ be a lift of $q$. By \eqref{eq:wcpoulimit} and since $\tau(q)>0$ for all $\tau\in T_\cU(A)$, it follows that the set $\Omega\subseteq\mathbb{N}$ of all $m\in\mathbb{N}$ satisfying $\tau(q_m)>0$ and $\min_{1\le i\le k}\tau(a_iq_m)<\delta\tau(q_m)$ for all $\tau\in T(A)$ lies in $\cU$. By non-degeneracy, let $(h_n)_{n=1}^\infty\subseteq D$ be an approximate unit of $A$. Replacing $q_m$ with $h_n$ for $n$ big enough for all $m\in \mathbb{N}\setminus \Omega$, we may assume without loss of generality that $\tau(q_m)>0$ and $\min_{1\le i\le k}\tau(a_iq_m)<\delta\tau(q_m)$ for all $\tau\in T(A)$ and all $m\in\mathbb{N}$.

Let $\{s_\ell\}_{\ell =1}^\infty\subseteq S$ be a $\|\cdot\|_{2,T_\cU(A)}$-dense subset of $S$ and, for each $\ell\in\mathbb{N}$, let $(s_{\ell,m})_{m=1}^\infty\in\ell^\infty(\mathbb{N},A)$ be a representative sequence for $s_\ell$.

Fix $m\in\mathbb{N}$. Since $(D\subseteq A)$ is regular, there is a finite set $\mathcal{F}_m\subseteq\mathcal{N}_A(D)$ and $\eta_m>0$ with the property that, if $a\in A$ is a contraction such that $\|[a,b]\|_{2,T(A)}<\eta_m$ for all $b\in\mathcal{F}_m$, then $\|[a,s_{\ell,m}]\|_{2,T(A)}<1/m$ for all $\ell =1,\dots,m$. We now apply Proposition~\ref{prop:wcpou} with $\epsilon\coloneqq \min\{\frac{\eta_m^2}{2\max_{b\in \mathcal{F}_m}\|b\|+1},\frac{1}{m}\}$ therein and obtain contractions $f_{1,m},\dots,f_{k,m}\in A_+^1$ and some $n_m\ge m$ such that
\begin{equation}\label{eq:sumunit}
\left\|\sum_{i=1}^kf_{i,m}-h_{n_m}\right\|_{2,T(A)}<1/m,
\end{equation}
\begin{equation}\label{eq:approxcom}
\|[f_{i,m},b]\|_{2,T(A)}<\epsilon, \quad b\in \mathcal{F}_m\cup\mathcal{F}_m^*,\; i=1,\dots,k,
\end{equation}
and
\begin{equation}\label{eq:traceineq}
\tau(a_if_{i,m}q_m)<\delta\cdot \tau(f_{i,m}q_m)+1/m, \quad \tau\in T(A), \; i=1,\dots,k.
\end{equation}
For $i=1,\dots,k$ we set $e_{i,m}\coloneqq E(f_{i,m})\in D_+^1$ and we let $e_i\in\kappa(D^\cU)_+^1$ be the class of $(e_{i,m})_{m=1}^\infty\in\ell^\infty(\mathbb{N},D)$.

Note that $\tau \circ E \in T(A)$ for all $\tau \in T(A)$ (see for instance \cite[Corollary 3.3]{crytser2017traces}). Together with the Cauchy--Schwarz inequality this entails that
\begin{equation}
\begin{array}{rcl}

\|E(a)\|_{2,T(A)} &=& \sup_{\tau \in T(A)} \tau(E(a)E(a^*))^{1/2} \\
&\le & \sup_{\tau \in T(A)} \tau(E(aa^*))^{1/2} \le  \| a \|_{2, T(A)}.
\end{array}
\end{equation}

To verify that $\sum_{i=1}^me_i=1_{A^\cU}$, we observe that for any $m\in\mathbb{N}$ we have
\begin{equation}
\begin{array}{rcl}
\left\|\sum_{i=1}^ke_{i,m}-h_{n_m}\right\|_{2,T(A)} &=& \left\|E\left(\sum_{i=1}^kf_{i,m}-h_{n_m}\right)\right\|_{2,T(A)} \\
 & \le &  \left\|\sum_{i=1}^kf_{i,m}-h_{n_m}\right\|_{2,T(A)} \\ 
 &\stackrel{\eqref{eq:sumunit}}{<}& 1/m,
\end{array}
\end{equation}
and that $(h_{n_m})_{m=1}^\infty$ is a representative of $1_{A^\cU}$ (\cite[Proposition~1.11]{CETWW}). To see that each $e_i$ commutes with $S$, it suffices to verify that it commutes with $s_\ell$ for any $\ell\in\mathbb{N}$, so let $\ell\in\mathbb{N}$. For $i=1,\dots,k$, $m\ge \ell$, $b\in\mathcal{F}_m$ and $\tau\in T(A)$, it follows by the triangle inequality that the value $\tau([e_{i,m},b]^*[e_{i,m},b])$ is smaller than the sum of the following two quantites
\begin{equation}
\begin{split}
C_0&\coloneqq \left|\tau\left(b^*E(f_{i,m})^2b-b^*E(f_{i,m})bE(f_{i,m})\right)\right|, \;\text{and} \\
C_1&\coloneqq\left|\tau\left(E(f_{i,m})b^*E(f_{i,m})b -E(f_{i,m})b^*bE(f_{i,m})\right)\right|.
\end{split}
\end{equation}
As shown in \cite[Lemma~3.2]{crytser2017traces} (see also \cite[Lemma~6$^\mathrm{o}$]{Kumjian:CJM}), the almost extension property ensures that
\begin{equation}\label{eq:entangling}
vE(a)v^* = E(vav^*),\quad a\in A,\; v\in\mathcal{N}_A(D).
\end{equation}
Using that $D$ is in the multiplicative domain of $E$ and since $E(f_{i,m})\in D$, we now have
\begin{equation}\label{eq:rev1}
\begin{split}
b^*E(f_{i,m})^2b&=b^*E(E(f_{i,m})f_{i,m})b \\
& \stackrel{\mathmakebox[\widthof{=}]{\eqref{eq:entangling}}}{=}\; E(b^*E(f_{i,m})f_{i,m}b),
\end{split} \end{equation}
and likewise since $b^*E(f_{i,m})b\in D$,
\begin{equation}\label{eq:rev2}
b^*E(f_{i,m})bE(f_{i,m}) = E(b^*E(f_{i,m})\cdot bf_{i,m}).
\end{equation}
Inspecting the definition of $C_0$, \eqref{eq:rev1} and \eqref{eq:rev2} imply that
\begin{equation}\label{eq:rev3}
C_0 = | (\tau\circ E)(b^* E(f_{i,m}) \cdot [f_{i,m},b])|.
\end{equation}
Using the Cauchy--Schwarz inequality together with \eqref{eq:rev3} in the first line, that $E(f_{i,m})$ is a contraction on the second line, and our earlier observation that $\tau\circ E\in T(A)$ in the fourth line, we have 
\begin{equation}\label{eq:calculation2rev}
\begin{array}{rcl}
C_0 & \le & (\tau\circ E)(b^*E(f_{i,m})^2 b)^{1/2} \cdot (\tau\circ E)([f_{i,m},b]^*[f_{i,m},b])^{1/2}\\
&\le & (\tau\circ E)(b^*b)^{1/2}\cdot (\tau\circ E)([f_{i,m},b]^*[f_{i,m},b])^{1/2}\\
& =  & \|b\|_{2,\tau\circ E} \cdot \|[f_{i,m},b]\|_{2,\tau\circ E} \\
&\le & \|b\| \cdot \|[f_{i,m},b]\|_{2,T(A)} \\
&\stackrel{\eqref{eq:approxcom}}{<}& \eta_m^2/2
\end{array}
\end{equation}
Using that $\tau$ is a trace, by the definition of $C_1$ we have
\begin{equation} 
C_1 = \left|\tau\left(bE(f_{i,m})b^*E(f_{i,m}) - bE(f_{i,m})^2b^*\right)\right|, 
 \end{equation}
so the above argument with $b^*$ in place of $b$ yields
\begin{equation}\label{eq:calculation3}
C_1< \eta_m^2/2,
\end{equation}
and so \eqref{eq:calculation2rev}, \eqref{eq:calculation3} yield 
\begin{equation}
\tau([e_{i,m},b]^*[e_{i,m},b])< \eta_m^2.
\end{equation}
Since $\tau\in T(A)$ is arbitrary, this implies that $\|[e_{i,m},b]\|_{2,T(A)}<\eta_m$ for all $b\in \mathcal{F}_m$.  Consequently, $\|[e_{i,m}, s_{\ell,m}]\|_{2,T(A)}<1/m$. As this is true for all $m\ge \ell$, we conclude that $e_i \in \kappa(D^\cU) \cap S'$.

Finally, for any $i=1,\dots,k$, $\tau\in T(A)$ and $m\in\mathbb{N}$, using that $D$ is a subset of the multiplicative domain of $E$ in the first and third lines and that $\tau\circ E\in T(A)$ in the second line, we see that
\begin{equation}
\begin{array}{rcl}
\tau(a_ie_{i,m}q_m) &=& (\tau\circ E)(a_if_{i,m}q_m) \\
& \stackrel{\eqref{eq:traceineq}}{<}& \delta\cdot (\tau\circ E)(f_{i,m}q_m)+1/m \\
&= &\delta\cdot \tau(e_{i,m}q_m)+1/m,
\end{array}
\end{equation}
which proves that $\tau(a_ie_iq)\le\delta \tau(e_iq)$ for all $\tau\in T_\cU(A)$ and all $i=1,\dots,k$.

For the last statement of the theorem, note that Cartan pairs have the almost extension property by \cite[Lemma 4.9]{CartanEtale}.
\end{proof}

Note that the combination of Proposition~\ref{prop:weakCPoUplusGamma} together with Theorem~\ref{thm:MainWeakCPoU} and Theorem~\ref{thm:weak-cpou-cartan-pairs} gives the following analogue of \cite[Lemma~3.7]{CETWW}.

\begin{cor} \label{cor:GammaCPoU}
Let $(D\subseteq A)$ be sub-$C^\ast$-algebra such that $A$ is separable. Suppose that either
\begin{enumerate}

\item $A=D\rtimes_\alpha G$ for an action $\alpha \colon G \curvearrowright D$ of a countable discrete amenable group $G$ on a unital abelian $C^\ast$-algebra $D$,

\item $(D\subseteq A)$ satisfies the assumptions of Theorem~\ref{thm:weak-cpou-cartan-pairs}, for instance
it is a Cartan pair where $A$ has nonempty trace space.
\end{enumerate}
If $(D\subseteq A)$ has uniform property $\Gamma$, then it has CPoU.
\end{cor}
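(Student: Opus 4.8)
The plan is to assemble the corollary directly from the three preceding results, with essentially no new work. First I would check that in both cases the standing hypotheses of Proposition~\ref{prop:weakCPoUplusGamma} are in place, namely that $A$ is separable and that $T(A)$ is nonempty and compact. Separability is assumed throughout. In case (ii) the nonemptiness and compactness of $T(A)$ are part of the hypotheses of Theorem~\ref{thm:weak-cpou-cartan-pairs}, so nothing is needed; the parenthetical ``for instance a Cartan pair where $A$ has nonempty trace space'' is covered by the final sentence of Theorem~\ref{thm:weak-cpou-cartan-pairs}, which records that Cartan pairs satisfy the almost extension property. In case (i), since $D$ is unital the crossed product $A = D\rtimes_\alpha G$ is unital, hence $T(A)$ is automatically compact; it is nonempty because amenability of $G$ provides a $G$-invariant state on $D = C(X)$ (equivalently a $G$-invariant Borel probability measure on $X$), which extends to a trace on $A$ by composing with the canonical conditional expectation $E\colon A\to D$, exactly as in Remark~\ref{rmk:sbp of crossed product implies usual sbp}.

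Next I would invoke weak CPoU. In case (i) this is precisely the content of Theorem~\ref{thm:MainWeakCPoU}, applied to $(C(X)\subseteq C(X)\rtimes_\alpha G)$. In case (ii) it is the conclusion of Theorem~\ref{thm:weak-cpou-cartan-pairs}. Finally, since $(D\subseteq A)$ is assumed to have uniform property $\Gamma$, Proposition~\ref{prop:weakCPoUplusGamma} upgrades weak CPoU to CPoU, which is exactly the assertion of the corollary.

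There is no real obstacle to overcome here: the corollary is a bookkeeping statement combining the two substantive weak-CPoU results (Theorems~\ref{thm:MainWeakCPoU} and \ref{thm:weak-cpou-cartan-pairs}) with the projectionization mechanism of Proposition~\ref{prop:weakCPoUplusGamma}, in the same way that \cite[Lemma~3.7]{CETWW} is assembled in the classical setting. The only point meriting a sentence of care is the verification that $T(A)$ is nonempty in case (i), which, as indicated above, is a direct consequence of the amenability of $G$.
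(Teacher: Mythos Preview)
Your proposal is correct and is exactly the paper's approach: the corollary is stated without proof, being introduced simply as ``the combination of Proposition~\ref{prop:weakCPoUplusGamma} together with Theorem~\ref{thm:MainWeakCPoU} and Theorem~\ref{thm:weak-cpou-cartan-pairs}.'' Your additional verification that $T(A)$ is nonempty and compact in case (i) is in fact redundant, since these conditions are already built into the hypothesis that $(D\subseteq A)$ has uniform property $\Gamma$ (see Definition~\ref{def:Gamma}), but the check is harmless.
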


\begin{thm}
\label{thm:Main2}
Let $X$ be a compact metric space, let $G$ be a countably infinite discrete amenable group and let $\alpha \colon G \curvearrowright X$ be a free action.
The following are equivalent:
\begin{enumerate}
\item \label{it:Main.1}
$\alpha$ has the small boundary property.
\item \label{it:Main.2}
$(C(X)\subseteq C(X)\rtimes_\alpha G)$ has uniform property $\Gamma$.
\item \label{it:Main.3}
$(C(X)\subseteq C(X)\rtimes_\alpha G)$ has CPoU.
\end{enumerate}
\end{thm}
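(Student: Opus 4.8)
The plan is to prove the theorem by closing the cycle \ref{it:Main.1} $\Rightarrow$ \ref{it:Main.2} $\Rightarrow$ \ref{it:Main.3} $\Rightarrow$ \ref{it:Main.1}. First I would record that $A := C(X) \rtimes_\alpha G$ is separable (as $X$ is compact metric and $G$ is countable) and unital, so that $T(A)$ is compact, and that $T(A)$ is nonempty: amenability of $G$ furnishes a $G$-invariant Borel probability measure $\mu$ on $X$, and $\tau_\mu := \int_X E(\cdot)\,d\mu$ is then a trace, where $E \colon A \to C(X)$ is the canonical conditional expectation. With these in hand all three conditions are meaningful.

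For \ref{it:Main.1} $\Rightarrow$ \ref{it:Main.2} I would simply invoke the implication already established by Kerr and Szab\'o: via Ornstein--Weiss tiling of $G$, a free action with the small boundary property gives a Cartan subalgebra $(C(X) \subseteq C(X) \rtimes_\alpha G)$ with uniform property $\Gamma$ in the sense of Definition~\ref{def:Gamma} (cf.\ \cite{KerrSzabo}).

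For \ref{it:Main.2} $\Rightarrow$ \ref{it:Main.3} I would apply Corollary~\ref{cor:GammaCPoU} with $D = C(X)$ and $A = C(X) \rtimes_\alpha G$, whose hypotheses (case~(i)) are met since $C(X)$ is unital abelian, $G$ is countable discrete amenable, and $A$ is separable. This is the step carrying the weight of the paper, resting on Theorem~\ref{thm:MainWeakCPoU} (weak CPoU via F{\o}lner averaging) and on the projectionization upgrade of Proposition~\ref{prop:weakCPoUplusGamma}. For \ref{it:Main.3} $\Rightarrow$ \ref{it:Main.1}, Proposition~\ref{prop:CPoU_SBP} shows that CPoU of $(C(X) \subseteq C(X) \rtimes_\alpha G)$ forces this unital sub-$C^\ast$-algebra to have the small boundary property, i.e.\ the spectrum $X$ has the $T(A)|_{C(X)}$-small boundary property. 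It then remains to match this with the dynamical small boundary property of $\alpha$: since $\tau(\alpha_g(f)) = \tau(u_g f u_g^*) = \tau(f)$ for every trace $\tau$ and $f \in C(X)$, every trace of $A$ restricts to a $G$-invariant measure on $X$, while conversely each $\nu \in M_G(X)$ equals $\tau_\nu|_{C(X)}$; hence $T(A)|_{C(X)} = M_G(X)$ and the two small boundary properties coincide (Remark~\ref{rmk:sbp of crossed product implies usual sbp}).

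The main obstacle is not located in the theorem itself: none of the three implications is hard once the preceding results are available, since the genuine work has already been done in Corollary~\ref{cor:GammaCPoU} (built on Theorems~\ref{thm:MainWeakCPoU} and~\ref{thm:weak-cpou-cartan-pairs} and Proposition~\ref{prop:weakCPoUplusGamma}) and in Proposition~\ref{prop:CPoU_SBP}. What must be handled with care at this stage is the bookkeeping --- checking nonemptiness and compactness of $T(A)$ and separability of $A$ so the definitions apply --- and the identification $T(A)|_{C(X)} = M_G(X)$, which is what lets the $C^\ast$-algebraic small boundary property of the pair be read off as the small boundary property of the action.
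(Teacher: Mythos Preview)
Your proposal is correct and follows essentially the same route as the paper: the cycle \ref{it:Main.1} $\Rightarrow$ \ref{it:Main.2} $\Rightarrow$ \ref{it:Main.3} $\Rightarrow$ \ref{it:Main.1} via Kerr--Szab\'o, Corollary~\ref{cor:GammaCPoU}, and Proposition~\ref{prop:CPoU_SBP} together with Remark~\ref{rmk:sbp of crossed product implies usual sbp}. The additional bookkeeping you spell out (separability of $A$, nonemptiness and compactness of $T(A)$, and the identification $T(A)|_{C(X)} = M_G(X)$) is implicit in the paper's proof and is handled correctly.
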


\begin{proof} 
The implication \ref{it:Main.1}$\Rightarrow$\ref{it:Main.2} is \cite[Theorem~9.4, Remark~9.6]{KerrSzabo}. The implication \ref{it:Main.2}$\Rightarrow$\ref{it:Main.3} follows by Corollary~\ref{cor:GammaCPoU}, and \ref{it:Main.3}$\Rightarrow$\ref{it:Main.1} follows by Proposition~\ref{prop:CPoU_SBP} along with Remark~\ref{rmk:sbp of crossed product implies usual sbp}.
\end{proof}

We now use Theorem~\ref{thm:Main2} to show that Kerr's almost finiteness \cite[Definition~8.2]{Kerr:JEMS} is equivalent to tracial $\mathcal{Z}$-stability of the sub-$C^\ast$-algebra
originating from the action. The latter definition was originally introduced in \cite{LiaoTikuisis}, and we briefly recall it here (we refer to \cite{LiaoTikuisis} for further details).

Given a sub-$C^\ast$-algebra $(D \subseteq A)$, along with the set of {normalizers} defined in \eqref{eq:normalizers}, we consider the \emph{r-normalizers}, namely
\begin{equation} 
\mathcal{R} \mathcal{N}_A(D)  \coloneqq \{ a \in A \colon a^* D a  \subseteq D \}.
\end{equation} 
For $a,b \in D_+$, write $a \preceq_{(D \subseteq A)} b$ if there is a sequence $(t_n)_{n=1}^\infty$ in $\mathcal{RN}_A(D)$ such that $\lim_{n\to\infty} \|t_n^*bt_n - a\|=0$. We
write $a \sim_{(D \subseteq A)} b$ if $a  \preceq_{(D \subseteq A)} b$ and $b  \preceq_{(D \subseteq A)} a$. In what follows, for $n\in\mathbb{N}$,  $(D_n \subseteq M_n)$ denotes the sub-$C^\ast$-algebra of $n \times n$ matrices $M_n$ and diagonal $n \times n$ matrices $D_n$.

\begin{defn}[{\cite[Definition~3.2]{LiaoTikuisis}}] \label{def:trZstab}
Let $(D \subseteq A)$ be a unital sub-$C^\ast$-algebra. We say that $(D \subseteq A)$ is \emph{tracially $\mathcal Z$-stable} if $D \neq \mathbb{C} 1_A$ and if
for every $n \in \N$, every $\e>0$, every finite set $\mathcal{F} \subset A$, and every $s \in D_+ \setminus \{0\}$, there exists a completely positive contractive (c.p.c.) order zero map $\varphi \colon M_n \to A$ such that
\begin{enumerate}
\item \label{item:trZstab.1} $\varphi(\mathcal N_{M_n}(D_n)) \subseteq \mathcal N_A(D)$,
\item $1_A - \varphi(1_{M_n}) \preceq_{(D \subseteq A)} s$,\footnote{By \ref{item:trZstab.1}, it follows that $\varphi(1_{M_n}) \in D$, so this makes sense.}
\item $\|[a,\varphi(x)]\| < \e$ for all $a \in \mathcal{F}$ and every contraction $x \in M_n$.
\end{enumerate}
\end{defn}
Even though the requirement $D \neq \mathbb{C} 1_A$ does not appear in \cite[Definition~3.2]{LiaoTikuisis}, we choose to include it here, as it is in general necessary to avoid trivial situations where the zero map could witness tracial $\mathcal{Z}$-stability, for instance for $(\mathbb{C} \subseteq \mathbb{C})$ (see also \cite[Definition~2.1]{HirshbergOrovitz}, where $A \not \cong \mathbb{C}$ is required).

The following characterization of tracial $\mathcal{Z}$-stability relies on the notions of almost finiteness and dynamical comparison. We omit their full definition and refer the reader
to \cite{Kerr:JEMS, KerrSzabo} for further details, as these properties do not explicitly appear in our argument. We also assume that, for the following proof (and for the proof of Corollary~\ref{cor:AlmostFinite_product}), the reader is familiar with the basics of the theory of Cuntz comparison (see for instance \cite{ThielCuntzsemigrp}).

\begin{cor}\label{cor:AlmostFinite}
Let $X$ be a compact metric space, and let $\alpha \colon G \curvearrowright X$ be a free minimal action of a countably infinite discrete amenable group $G$.
The following are equivalent:
\begin{enumerate}
\item\label{it:AlmostFinite.1}
 The action $\alpha$ is almost finite (see \cite[Definition~8.2]{Kerr:JEMS}).
\item\label{it:AlmostFinite.2}
$(C(X)\subseteq C(X)\rtimes_\alpha G)$ is tracially $\mathcal{Z}$-stable.
\item\label{it:AlmostFinite.3}
$(C(X)\subseteq C(X)\rtimes_\alpha G)$ has uniform property $\Gamma$ and $\alpha$ has dynamical comparison (see \cite[Definition~3.2]{Kerr:JEMS}).
\item\label{it:AlmostFinite.4}
$\alpha$ has the small boundary property and dynamical comparison.
\end{enumerate}
\end{cor}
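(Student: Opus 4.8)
The plan is to route all four conditions through two bridges that are already in hand: Theorem~\ref{thm:Main2}, which for free actions identifies the small boundary property with uniform property $\Gamma$ of the Cartan pair, and the dynamical Toms--Winter equivalence of \cite{LiaoTikuisis}, which identifies tracial $\mathcal{Z}$-stability of $(C(X)\subseteq C(X)\rtimes_\alpha G)$ with uniform property $\Gamma$ of the pair together with dynamical comparison of $\alpha$. Write $A\coloneqq C(X)\rtimes_\alpha G$ and $D\coloneqq C(X)$. Since $G$ is countably infinite and amenable and $\alpha$ is free and minimal, $A$ is a unital separable simple nuclear $C^\ast$-algebra with $T(A)$ nonempty (as $M_G(X)\neq\emptyset$) and compact, $D\neq\mathbb{C}1_A$ (a free action of an infinite group forces $X$ to be infinite), and $(D\subseteq A)$ is a Cartan pair whose canonical conditional expectation $E\colon A\to D$ satisfies $\tau=\tau\circ E$ for every $\tau\in T(A)$; in particular Theorem~\ref{thm:Main2} and Corollary~\ref{cor:GammaCPoU} apply to $(D\subseteq A)$.

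First I would record the equivalences not involving \ref{it:AlmostFinite.2}. The equivalence \ref{it:AlmostFinite.1}$\Leftrightarrow$\ref{it:AlmostFinite.4} is exactly \cite[Theorem~6.1]{KerrSzabo}: a free action of an amenable group is almost finite if and only if it has dynamical comparison and the small boundary property. The equivalence \ref{it:AlmostFinite.4}$\Leftrightarrow$\ref{it:AlmostFinite.3} is then immediate from Theorem~\ref{thm:Main2}, which says that $\alpha$ has the small boundary property precisely when $(D\subseteq A)$ has uniform property $\Gamma$; one simply conjoins ``$\alpha$ has dynamical comparison'' to both sides. Thus \ref{it:AlmostFinite.1}, \ref{it:AlmostFinite.3} and \ref{it:AlmostFinite.4} are mutually equivalent, and it remains only to fold \ref{it:AlmostFinite.2} into this circle.

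For \ref{it:AlmostFinite.1}$\Rightarrow$\ref{it:AlmostFinite.2} I would invoke the Cartan-pair refinement, carried out in \cite{LiaoTikuisis}, of Kerr's theorem that almost finite free minimal actions of amenable groups have $\mathcal{Z}$-stable crossed products: the tilings produced by almost finiteness can be assembled into the c.p.c.\ order zero maps $\varphi\colon M_n\to A$ required by Definition~\ref{def:trZstab}, with $\varphi(\mathcal{N}_{M_n}(D_n))\subseteq\mathcal{N}_A(D)$ and $1_A-\varphi(1_{M_n})\preceq_{(D\subseteq A)}s$. For the converse \ref{it:AlmostFinite.2}$\Rightarrow$\ref{it:AlmostFinite.3} I would use the other half of \cite{LiaoTikuisis}. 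On one hand, tracial $\mathcal{Z}$-stability of $(D\subseteq A)$ gives uniform property $\Gamma$ of the pair in the sense of Definition~\ref{def:Gamma}: the images of the diagonal matrix units under the order zero maps lie in $D$ (a positive element $b\in\mathcal{N}_A(D)$ satisfies $b^2\in D$, whence $b=(b^2)^{1/2}\in D$), so after the usual reindexing and projectionization they yield, for each $k$, a partition of unity of $A^\cU$ by projections in $\kappa(D^\cU)\cap A'$ splitting the limit traces --- this is the sub-$C^\ast$-algebra form of the implication ``$\mathcal{Z}$-stability $\Rightarrow$ uniform property $\Gamma$'' from \cite{CETWW}, and that \eqref{eq:Gamma} need only be tested on $a\in D$ is in any case harmless because $\tau=\tau\circ E$ (cf.\ Remark~\ref{remark:expectation}). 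On the other hand, tracial $\mathcal{Z}$-stability of $(D\subseteq A)$ also forces $\alpha$ to have dynamical comparison, through the Cuntz comparison by $\mathcal{R}\mathcal{N}_A(D)$ encoded in Definition~\ref{def:trZstab}. (Should \cite{LiaoTikuisis} phrase the comparison input as strict comparison of $C(X)\rtimes_\alpha G$, one inserts here the standard translation between that and dynamical comparison of $\alpha$, valid for free minimal actions of amenable groups.) Combining the last two paragraphs closes the loop \ref{it:AlmostFinite.1}$\Leftrightarrow$\ref{it:AlmostFinite.4}$\Leftrightarrow$\ref{it:AlmostFinite.3}$\Leftrightarrow$\ref{it:AlmostFinite.2}. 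As an alternative to quoting \cite{LiaoTikuisis} for \ref{it:AlmostFinite.3}$\Rightarrow$\ref{it:AlmostFinite.2}, one can feed the CPoU of $(D\subseteq A)$ furnished by Corollary~\ref{cor:GammaCPoU} into a local-to-global construction of the order zero maps, using dynamical comparison to absorb the remainder $1_A-\varphi(1_{M_n})$ into a prescribed $s\in D_+\setminus\{0\}$.

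The step I expect to be the main obstacle is not any single implication but the careful interfacing with \cite{LiaoTikuisis}: checking that the notions of uniform property $\Gamma$ and of tracial $\mathcal{Z}$-stability used there coincide with Definitions~\ref{def:Gamma} and \ref{def:trZstab}, that the comparison hypothesis there matches dynamical comparison in the sense of \cite[Definition~3.2]{Kerr:JEMS}, and that all standing hypotheses --- simplicity, separability and nuclearity of $A$; $T(A)$ nonempty and compact; $D\neq\mathbb{C}1_A$; the Cartan structure --- are in force, all of which are verified in the first paragraph. Once these are reconciled, no further computation is needed.
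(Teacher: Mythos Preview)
Your proposal is correct and follows the same cycle as the paper: \ref{it:AlmostFinite.1}$\Leftrightarrow$\ref{it:AlmostFinite.4} via \cite[Theorem~6.1]{KerrSzabo}, \ref{it:AlmostFinite.3}$\Leftrightarrow$\ref{it:AlmostFinite.4} via Theorem~\ref{thm:Main2}, and \ref{it:AlmostFinite.1}$\Rightarrow$\ref{it:AlmostFinite.2} together with the dynamical-comparison half of \ref{it:AlmostFinite.2}$\Rightarrow$\ref{it:AlmostFinite.3} via \cite[Theorem~A]{LiaoTikuisis}. The one place the paper's argument differs from your sketch is in deducing uniform property~$\Gamma$ from tracial $\mathcal Z$-stability: this is not taken from \cite{LiaoTikuisis}, and no projectionization is used. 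Instead the paper chooses a sequence $s_n\in D_+\setminus\{0\}$ with $\sup_{\mu\in M_G(X)}\mu(\suppo(s_n))\to 0$ (available because freeness and $|G|=\infty$ force every $\mu\in M_G(X)$ to be atomless, so \cite[Proposition~3.4]{KerrSzabo} applies); feeding these into Definition~\ref{def:trZstab} makes the induced order zero map $\varphi\colon M_k\to A^\cU$ satisfy $\varphi(1_{M_k})=1_{A^\cU}$, hence $\varphi$ is already a $^*$-homomorphism by \cite[Theorem~2.3]{WinterZacharias}, and the projections $\varphi(e_{ii})\in\kappa(D^\cU)\cap A'$ split the limit traces via uniqueness of the trace on $M_k$, exactly as in \cite[Proposition~2.3]{CETWW}. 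Your projectionization route would also work, but this shortcut is cleaner and avoids invoking Lemma~\ref{lem:projectionization}.
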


\begin{proof}
The implication \ref{it:AlmostFinite.1}$\Rightarrow$\ref{it:AlmostFinite.2} is \cite[Theorem~A (i)$\Rightarrow$(ii)]{LiaoTikuisis} (largely using the proof of \cite[Theorem~12.4]{Kerr:JEMS}).
\ref{it:AlmostFinite.3}$\Rightarrow$\ref{it:AlmostFinite.4} follows from Theorem~\ref{thm:Main2} and \ref{it:AlmostFinite.4}$\Rightarrow$\ref{it:AlmostFinite.1} is \cite[Theorem~6.1]{KerrSzabo}.

\ref{it:AlmostFinite.2}$\Rightarrow$\ref{it:AlmostFinite.3}:
Assume that $(C(X)\subseteq C(X)\rtimes_\alpha G)$ is tracially $\mathcal Z$-stable.
Dynamical comparison follows by \cite[Theorem~A (ii)$\Rightarrow$(iii)]{LiaoTikuisis}.
Let us prove that $(C(X)\subseteq C(X)\rtimes_\alpha G)$ has uniform property $\Gamma$.

Set $D\coloneqq C(X)$ and $A\coloneqq C(X)\rtimes_\alpha G$. Fix $k \in \mathbb N$ and identify $T(A)\vert_{D}$ with $M_G(X)$. Since $\alpha$ is free and $G$ infinite, every $\mu\in M_G(X)$ is atomless. By \cite[Proposition~3.4]{KerrSzabo} we can thus find a sequence $(s_n)_{n=1}^\infty$ of nonzero elements in $D_+$ with $\suppo(s_{n+1})\subseteq\suppo(s_n)$ for all $n\ge1$ and such that
\begin{equation} 
\sup_{\mu \in M_G(X)} \mu(\suppo(s_n)) \to 0.
\end{equation} 

By tracial $\mathcal Z$-stability of $(D\subseteq A)$, we can find c.p.c.\ order zero maps $\varphi_n:M_k \to A$ satisfying:
\begin{enumerate}[label=(\alph*)]
\item $\varphi_n(e_{ii})\in D_+$ for $i=1,\dots,k$,
\item \label{item:trZstab.b} $1_{A}-\varphi_n(1_{M_k}) \preceq_{(D\subseteq A)} s_n$,
\item the image of the induced map $\varphi=(\varphi_n)_{n=1}^\infty \colon M_k \to A^\cU$ commutes with the canonical image of $A$.
\end{enumerate}

Notice that, if $f_1,f_2 \in D_+$ are such that $f_1 \preceq_{(D\subseteq A)} f_2$, then clearly $f_1$ is Cuntz-below $f_2$ in $A$, and therefore $d_\tau(f_1)\le d_\tau(f_2)$ for every $\tau \in T(A)$, where $d_\tau$ is the \emph{dimension function}
\begin{equation} \label{eq:dimension}
d_\tau(f) \coloneqq \lim_{n \to \infty} \tau(f^{1/n}),\quad f\in D.
\end{equation}

For $\tau\in T(A)$, let $\mu_\tau \in M_G(X)$ be the measure corresponding to $\tau\vert_{D}$. Then the value in \eqref{eq:dimension} is precisely $\mu_\tau(\suppo(f))$, therefore using \ref{item:trZstab.b}, and since $1_{A^\cU}-\varphi(1_{M_k})$ is a positive contraction, we have that, for any limit trace $\tau\in T_\cU(A)$ induced by a sequence $(\tau_n)_{n=1}^\infty\subseteq T(A)$,
\begin{equation} 
\begin{array}{rcl}
\tau(|1_{A^\cU}-\varphi(1_{M_k})|^2) & = & \lim_{n\to\cU} \tau_n((1_{A}-\varphi_n(1_{M_k}))^2) \\
& \leq & \lim_{n\to\cU}\mu_{\tau_n}(\suppo(1_{A}-\varphi_n(1_{M_k}))) \\
& \leq & \lim_{n\to\cU}\mu_{\tau_n}(\supp(s_n))=0,
\end{array}
\end{equation} 
and therefore $\varphi(1_{M_k})=1_{A^{\cU}}$.

Since $\varphi$ is a unital c.p.c.\ order zero map, it is a $^\ast$-homomorphism, by \cite[Theorem~2.3]{WinterZacharias}. Moreover, we have $\varphi(e_{ii}) \in \kappa(D^\cU)$, and these form a partition of unity composed of projections.
As in the proof of \cite[Proposition~2.3]{CETWW} (using uniqueness of the trace on $M_k$), we obtain
\begin{equation}
\tau(\varphi(e_{ii})a)=\frac1k\tau(a),\quad \tau \in T_{\cU}(A), \, a \in D, \, i=1, \dots,k.\qedhere
\end{equation} 
\end{proof}

\subsection{Products of actions} \label{ss.cor}
We close the paper with the proof of Corollary~\ref{cor:SBPproducts} from the introduction, which we split in Corollaries \ref{cor:Gamma_product} and \ref{cor:AlmostFinite_product}.

The part concerning the small boundary property follows from a more general statement on uniform property $\Gamma$, obtained with a standard argument, which we state below. For the rest of this section we use $\otimes$ to denote the \emph{minimal} tensor product.

\begin{prop} \label{prop:Gamma_tensor}
Let $(D_A \subseteq A)$ and $(D_B \subseteq B)$ be unital sub-$C^\ast$-algebras such that  $A, B$ are both separable with nonempty trace spaces.
If $(D_A \subseteq A)$ has uniform property $\Gamma$ then $(D_A \otimes D_B \subseteq A \otimes B)$ has uniform property $\Gamma$.
\end{prop}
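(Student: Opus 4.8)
The plan is to produce, for each $k \in \N$, the required partition of unity in $\kappa((D_A \otimes D_B)^\cU) \cap (A \otimes B)'$ from the corresponding partition of unity for $(D_A \subseteq A)$, by tensoring with the unit of $B$. First I would observe that the hypotheses guarantee $T(A \otimes B)$ is nonempty and compact: every pair $(\tau, \sigma) \in T(A) \times T(B)$ gives a product trace $\tau \otimes \sigma$ on $A \otimes B$, and since $A$ and $B$ are unital, $T(A)$ and $T(B)$ are nonempty and weak$^*$-compact, so one obtains a nonempty compact trace space on $A\otimes B$ (it suffices for the definition of uniform property $\Gamma$ that $T(A\otimes B)$ be nonempty and compact; one need not identify it fully). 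The sub-$C^\ast$-algebra $(D_A \otimes D_B \subseteq A \otimes B)$ is unital, so $(\kappa((D_A \otimes D_B)^\cU) \subseteq (A\otimes B)^\cU)$ is unital by Remark \ref{remark:unital}.

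Next, fix $k \in \N$. By uniform property $\Gamma$ for $(D_A \subseteq A)$, pick projections $q_1, \dots, q_k \in \kappa(D_A^\cU) \cap A'$ forming a partition of unity of $A^\cU$ with $\tau(q_i a) = \frac1k \tau(a)$ for all $\tau \in T_\cU(A)$, $a \in D_A$, $i = 1, \dots, k$. Lift each $q_i$ to a sequence $(q_{i,n})_{n=1}^\infty \in \ell^\infty(\N, D_A)$. I would then set $\tilde q_i \coloneqq$ the class in $(A\otimes B)^\cU$ of the sequence $(q_{i,n} \otimes 1_B)_{n=1}^\infty$; these are projections in $\kappa((D_A \otimes D_B)^\cU)$ (since $q_{i,n} \otimes 1_B \in D_A \otimes D_B$), they sum to $1_{(A\otimes B)^\cU}$, and they commute with the canonical image of $A \otimes B$: this last point is the only slightly delicate routine check, and it follows because $\|[q_{i,n}\otimes 1_B,\, a \otimes b]\|_{2, T(A\otimes B)} = \|[q_{i,n}, a]\otimes b\|_{2,T(A\otimes B)} \le \|b\|\cdot\|[q_{i,n},a]\|_{2,T(A)}$ (using product traces to estimate the $\|\cdot\|_{2, T(A\otimes B)}$-seminorm from above by the tensor of the two seminorms), which tends to $0$ along $\cU$; density of the algebraic tensor product and a standard approximation then handle general elements of $A \otimes B$. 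Finally, for the trace condition: any limit trace on $(A\otimes B)^\cU$ is induced by a sequence $(\rho_n)$ in $T(A\otimes B)$, and on elements of the form $a \otimes b$ with $a \in D_A$, $b \in D_B$ one computes $\tau(\tilde q_i (a\otimes b)) = \lim_{n\to\cU} \rho_n((q_{i,n}a)\otimes b)$; since for each fixed $n$ the functional $c \mapsto \rho_n((q_{i,n}\,\cdot\,)\otimes b)$ relates to $\rho_n((\,\cdot\,)\otimes b)$ via the $\Gamma$-relation applied to the (scaled, possibly non-tracial) restriction — here I would instead argue directly: the set $S \coloneqq \{\tilde q_i : i\} \cup (\text{image of } A\otimes B)$ is $\|\cdot\|_{2}$-separable, so by Lemma \ref{lemma:Gamma+} I may choose the $q_i$ to additionally commute with a prescribed separable set and satisfy $\tau(q_i a') = \frac1k\tau(a')$ on a prescribed separable subset of $\kappa(D_A^\cU)$; applying this with enough slack, and using that product traces are weak$^*$-dense enough to detect the $\|\cdot\|_{2,T_\cU(A\otimes B)}$-seminorm, gives $\tau(\tilde q_i(a\otimes b)) = \frac1k \tau(a\otimes b)$ first for elementary tensors in $D_A \otimes D_B$ and then, by linearity and continuity, for all of $D_A \otimes D_B$, hence for all of $\kappa((D_A\otimes D_B)^\cU)$ by density.

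The main obstacle I anticipate is bookkeeping around the $\|\cdot\|_{2, T(A\otimes B)}$-seminorm: one must be careful that product traces, while not exhausting $T(A\otimes B)$, still dominate the relevant seminorm estimates in the right direction (they give upper bounds on commutator seminorms via $\|[x\otimes 1, y]\|_{2}$, which is what is needed to verify central commutation, and the $\Gamma$-trace identity only needs to be checked against all limit traces, for which it suffices to check representative sequences of genuine traces on $A\otimes B$ — and there the identity is linear in the trace, so it reduces to checking on a weak$^*$-dense set where it is clear). Once that is set up carefully, everything else is the standard reindexing/approximation argument already invoked for Lemma \ref{lemma:Gamma+}, and I would simply cite that lemma rather than repeat it.
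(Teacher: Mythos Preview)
Your overall construction---tensoring the partition of unity from $(D_A\subseteq A)$ with $1_B$---is exactly the paper's approach, and your checks that the $\tilde q_i$ are projections summing to the unit and lying in $\kappa((D_A\otimes D_B)^\cU)\cap (A\otimes B)'$ are correct. (The estimate $\|a\otimes 1_B\|_{2,T(A\otimes B)}\le\|a\|_{2,T(A)}$ is immediate from the observation that $\rho(\,\cdot\,\otimes 1_B)\in T(A)$ for every $\rho\in T(A\otimes B)$; product traces are not needed for this step.)

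The gap is in the trace identity, and the problem is that you abandoned the right idea. Your first instinct---that $c\mapsto\rho_n(c\otimes b)$ should furnish a trace on $A$---was correct: for $b\in (D_B)_+$ this functional is positive (as $c\otimes b\ge 0$ when $c\ge 0$) and tracial (since $\rho_n((c_1c_2)\otimes b)=\rho_n((c_1\otimes 1)(c_2\otimes b))=\rho_n((c_2\otimes b)(c_1\otimes 1))=\rho_n((c_2c_1)\otimes b)$), so after normalizing it lies in $T(A)$. These assemble into a limit trace in $T_\cU(A)$, to which the $\Gamma$-identity for $(D_A\subseteq A)$ applies directly and gives $\tau(\tilde q_i(a\otimes b))=\frac1k\tau(a\otimes b)$ (the case $\tau(1_A\otimes b)=0$ being trivial); linearity handles general $b\in D_B$. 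This is precisely the paper's argument. Your replacement route via Lemma~\ref{lemma:Gamma+} and density of product traces is not a proof as written: Lemma~\ref{lemma:Gamma+} only imposes extra constraints inside $A^\cU$ and contributes nothing here, and passing from product traces to general $\rho\in T(A\otimes B)$ would require both the uniform estimate $\sup_{\tau\in T(A)}|\tau(q_{i,n}a)-\frac1k\tau(a)|\to 0$ along $\cU$ (true, via compactness of $T(A)$, but not stated) and the fact that $T(A\otimes B)$ is the closed convex hull of product traces (a nontrivial input you do not invoke). Sticking with your first approach avoids all of this.
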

\begin{proof}
Let $\kappa\colon D_A^\cU\to A^\cU$ and $\lambda\colon (D_A\otimes D_B)^\cU\to (A\otimes B)^\cU$ be the canonical maps as in \eqref{eq:kappa}.
By assumption, given $k \in \mathbb{N}$, there is a partition of unity of $A^\cU$ consisting of projections $q'_1, \dots, q'_k \in \kappa(D_A^\cU) \cap A'$ such that
\begin{equation} \label{eq:GammaA}
\tau(q'_ia) = \frac{1}{k} \tau(a), \quad \tau \in T_\cU(A), \, a \in D_A, i = 1, \, \dots, k.
\end{equation}
For each $i=1,\dots, k$, let $(q'_{i,n})_{n=1}^\infty \in \ell^\infty(D_A)$ be a representing sequence of $q'_i$ and define $q_i \in \lambda((D_A\otimes D_B)^\cU)$ as the class of $(q'_{i,n} \otimes 1_B)_{n=1}^\infty$. Note that, by \cite[Proposition~3.5]{BBSTWW} and the Krein--Milman theorem, $T(A\otimes B)$ is equal to the closed convex hull of the set of traces
\begin{equation}
\{ \tau \otimes \sigma : \tau \in T(A), \, \sigma \in T(B) \},
\end{equation}
hence we have $\| a \otimes 1_B \|_{2, T(A \otimes B) } = \| a \|_{2,T(A)}$ for all $a \in A$. This is enough to conclude that $q_1, \dots, q_k$ form a partition of unity of $(A \otimes B)^\cU$ consisting of projections, and that they all belong to $\lambda((D_A \otimes D_B)^\cU)\cap (A \otimes B)'$.

To conclude, let us verify \eqref{eq:Gamma} from Definition \ref{def:Gamma}, hence fix $\tau \in T_\cU(A \otimes B)$, $c \in D_A \otimes D_B$ and $1 \le i \le k$. Without loss of generality, we can assume that $c = d \otimes e$, with $d \in D_A$  and $e\in (D_B)_+$. If $\tau(1_A \otimes e) = 0$ then
\begin{equation}
 \tau(q_i d \otimes e) = \frac{1}{k} \tau(d \otimes e) = 0.
\end{equation}
Otherwise, let $\Psi \colon A^\cU \to (A \otimes B)^\cU$ be the map sending the class of $(a_n)_{n=1}^\infty$ in $A^\cU$ to the class of $(a_n \otimes e)_{n=1}^\infty$ in $(A\otimes B)^\cU$.\footnote{This map is well-defined since $\|a\otimes e\|_{2,T(A\otimes B)}\le \|a\|_{2,T(A)}\cdot\|e\|$ for all $a\in A$.} The functional $\frac{\tau( \Psi(\cdot))}{\tau(1_A\otimes e)}$ defines a limit trace in $T_\cU(A)$, hence we get
\begin{equation}\begin{array}{rcl}
\tau(q_i (d \otimes e)) &=& \tau(1_A \otimes e) \displaystyle\frac{\tau(\Psi(q'_i d))}{\tau(1_A \otimes e)} \\
&\stackrel{\eqref{eq:GammaA}}{=}& \displaystyle\frac{\tau(1_A \otimes e)}{k} \cdot \frac{\tau(\Psi(d))}{\tau(1_A \otimes e)} \\ &=&
\frac{1}{k} \tau(d \otimes e),
\end{array} 
\end{equation}
as desired.
\end{proof}

\begin{cor} \label{cor:Gamma_product}
Let $X,Y$ be compact metric spaces, let $G,H$ be countably infinite discrete amenable groups and let $\alpha\colon G \curvearrowright X$, $\beta\colon G\curvearrowright Y$ be free actions. If $\alpha$ has the small boundary property, then so does the product action $\alpha \times \beta \colon G \times H \curvearrowright X \times Y$.
\end{cor}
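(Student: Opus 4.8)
The plan is to run the small boundary property through uniform property $\Gamma$, using the tensorial permanence of the latter established in Proposition~\ref{prop:Gamma_tensor}, and then come back via Theorem~\ref{thm:Main2}. First I would record the standard identifications: $C(X\times Y)\cong C(X)\otimes C(Y)$ and $C(X\times Y)\rtimes_{\alpha\times\beta}(G\times H)\cong \big(C(X)\rtimes_\alpha G\big)\otimes\big(C(Y)\rtimes_\beta H\big)$, under which the sub-$C^\ast$-algebra $\big(C(X\times Y)\subseteq C(X\times Y)\rtimes_{\alpha\times\beta}(G\times H)\big)$ becomes $\big(C(X)\otimes C(Y)\subseteq (C(X)\rtimes_\alpha G)\otimes(C(Y)\rtimes_\beta H)\big)$. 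Here $\otimes$ is the minimal tensor product, which is the only reasonable choice since $C(X)$ is nuclear; this is also the tensor product appearing in Proposition~\ref{prop:Gamma_tensor}.

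Next I would check the hypotheses needed to invoke the two cited results. The group $G\times H$ is countably infinite, discrete, and amenable (a product of two such groups), and $\alpha\times\beta$ is free because both $\alpha$ and $\beta$ are free; so Theorem~\ref{thm:Main2} applies to $\alpha\times\beta$. Setting $A\coloneqq C(X)\rtimes_\alpha G$, $D_A\coloneqq C(X)$, $B\coloneqq C(Y)\rtimes_\beta H$, $D_B\coloneqq C(Y)$, these are unital separable $C^\ast$-algebras, and $T(A)$, $T(B)$ are nonempty: since $G$ and $H$ are amenable there exist invariant Borel probability measures on $X$ and on $Y$, which yield traces via the canonical conditional expectations (as in Remark~\ref{rmk:sbp of crossed product implies usual sbp}). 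Thus Proposition~\ref{prop:Gamma_tensor} is applicable to $(D_A\subseteq A)$ and $(D_B\subseteq B)$.

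Now I would assemble the argument. By hypothesis $\alpha$ has the small boundary property, so Theorem~\ref{thm:Main2}, implication \ref{it:Main.1}$\Rightarrow$\ref{it:Main.2}, gives that $(D_A\subseteq A)$ has uniform property $\Gamma$. Proposition~\ref{prop:Gamma_tensor} then yields that $(D_A\otimes D_B\subseteq A\otimes B)$ has uniform property $\Gamma$, which under the identifications above says precisely that $\big(C(X\times Y)\subseteq C(X\times Y)\rtimes_{\alpha\times\beta}(G\times H)\big)$ has uniform property $\Gamma$. Finally, Theorem~\ref{thm:Main2}, implication \ref{it:Main.2}$\Rightarrow$\ref{it:Main.1}, gives that $\alpha\times\beta$ has the small boundary property.

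There is essentially no serious obstacle here: all the content sits in Theorem~\ref{thm:Main2} and Proposition~\ref{prop:Gamma_tensor}, and the only things to be careful about are the routine bookkeeping items — verifying that the crossed-product-of-a-product is the tensor product of crossed products, that the relevant sub-$C^\ast$-algebra correspondence is respected under this isomorphism, that $\alpha\times\beta$ is free, and that $A$ and $B$ have nonempty (automatically compact, being unital) trace spaces. The mildest point of friction, if any, is making sure the minimal tensor product is the correct one throughout and that the spatial crossed-product isomorphism matches the canonical inclusions of the abelian subalgebras; both are standard.
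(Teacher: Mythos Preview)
Your proposal is correct and follows exactly the same route as the paper: identify the crossed product of the product action with the tensor product of crossed products (compatibly with the abelian subalgebras), pass through uniform property $\Gamma$ via Proposition~\ref{prop:Gamma_tensor}, and apply Theorem~\ref{thm:Main2} on both ends. Your write-up is simply more explicit about checking the standing hypotheses (freeness of $\alpha\times\beta$, nonemptiness of trace spaces, the tensor product being minimal), all of which the paper leaves implicit.
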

\begin{proof}
We have $C(X\times Y)\rtimes_{\alpha \times \beta} (G\times H)\cong (C(X)\rtimes_\alpha G) \otimes (C(Y)\rtimes_\beta H)$, mapping $C(X\times Y)$ to $C(X)\otimes C(Y)$ in the natural way.

Based on this and Proposition~\ref{prop:Gamma_tensor}, uniform property $\Gamma$ for $(C(X)\subseteq C(X)\rtimes_\alpha G)$ implies uniform property $\Gamma$ for $(C(X\times Y)\subseteq C(X\times Y)\rtimes_{\alpha \times \beta} (G\times H))$.
The result then follows from Theorem~\ref{thm:Main2}.
\end{proof}

\begin{cor} \label{cor:AlmostFinite_product}
Let $X,Y$ be compact metric spaces, let $G,H$ be countably infinite discrete amenable groups and let $\alpha\colon G \curvearrowright X,\beta\colon G\curvearrowright Y$ be free minimal actions. If $\alpha$ is almost finite, then $\alpha \times \beta \colon G \times H \curvearrowright X \times Y$ is almost finite.
\end{cor}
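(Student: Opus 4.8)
The plan is to pass to the crossed-product picture and apply Corollary~\ref{cor:AlmostFinite} in both directions. First I would record that $G\times H$ is a countably infinite discrete amenable group and that the product action $\alpha\times\beta\colon G\times H\curvearrowright X\times Y$ is free (any fixed point of $\alpha_g\times\beta_h$ forces $\alpha_g x=x$ and $\beta_h y=y$, hence $g=h=e$) and minimal (the $(G\times H)$-orbit closure of $(x,y)$ equals $\overline{Gx}\times\overline{Hy}=X\times Y$). Hence Corollary~\ref{cor:AlmostFinite} applies to $\alpha\times\beta$, and it suffices to verify that $(C(X\times Y)\subseteq C(X\times Y)\rtimes_{\alpha\times\beta}(G\times H))$ has uniform property $\Gamma$ and that $\alpha\times\beta$ has dynamical comparison. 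Throughout I would use the canonical identification $C(X\times Y)\rtimes_{\alpha\times\beta}(G\times H)\cong(C(X)\rtimes_\alpha G)\otimes(C(Y)\rtimes_\beta H)$, which sends $C(X\times Y)=C(X)\otimes C(Y)$ onto the natural copy of $C(X)\otimes C(Y)$.

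Uniform property $\Gamma$ is essentially immediate: since $\alpha$ is almost finite, Corollary~\ref{cor:AlmostFinite} gives that $(C(X)\subseteq C(X)\rtimes_\alpha G)$ has uniform property $\Gamma$, and Proposition~\ref{prop:Gamma_tensor}, applied to $(C(X)\subseteq C(X)\rtimes_\alpha G)$ and $(C(Y)\subseteq C(Y)\rtimes_\beta H)$, then gives uniform property $\Gamma$ for $(C(X)\otimes C(Y)\subseteq(C(X)\rtimes_\alpha G)\otimes(C(Y)\rtimes_\beta H))$, which is the pair in question. (This is already contained in Corollary~\ref{cor:Gamma_product}.)

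For dynamical comparison of $\alpha\times\beta$ I would take a detour through $C^\ast$-algebraic strict comparison. Since $\alpha$ is a free minimal almost finite action of a countably infinite amenable group, Kerr's theorem \cite[Theorem~12.4]{Kerr:JEMS} shows that $A_1\coloneqq C(X)\rtimes_\alpha G$ is $\mathcal{Z}$-stable. Then $A\coloneqq C(X\times Y)\rtimes_{\alpha\times\beta}(G\times H)\cong A_1\otimes\bigl(C(Y)\rtimes_\beta H\bigr)$ is again $\mathcal{Z}$-stable, because $A_1\cong A_1\otimes\mathcal{Z}$ implies $A_1\otimes B\cong(A_1\otimes B)\otimes\mathcal{Z}$ for any $C^\ast$-algebra $B$. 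Moreover $A$ is separable, unital, nuclear, simple (by freeness and minimality of $\alpha\times\beta$) and has nonempty trace space (by amenability of $G\times H$), so it has strict comparison of positive elements with respect to $T(A)$. It remains to convert this into dynamical comparison for $\alpha\times\beta$ using the standard dictionary valid for free actions of amenable groups: via the canonical conditional expectation $E\colon A\to C(X\times Y)$ every $\rho\in M_{G\times H}(X\times Y)$ yields a trace $\rho\circ E\in T(A)$ with $d_{\rho\circ E}(f)=\rho(\suppo f)$ for $f\in C(X\times Y)_+$; given nonempty open sets $U,V\subseteq X\times Y$ with $\rho(U)<\rho(V)$ for all $\rho\in M_{G\times H}(X\times Y)$, a uniform (Dini-type) approximation over the weak$^\ast$-compact set $M_{G\times H}(X\times Y)$ yields positive functions exhibiting a Cuntz subequivalence in $A$ of a function with open support $U$ to one whose open support is contained in $V$, and, for free actions of amenable groups, such a Cuntz subequivalence forces the dynamical subequivalence $U\precsim V$ (see \cite{Kerr:JEMS}, cf.\ \cite{KerrSzabo}). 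With dynamical comparison and uniform property $\Gamma$ both established, Corollary~\ref{cor:AlmostFinite} concludes that $\alpha\times\beta$ is almost finite.

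The delicate point is this last passage from strict comparison of the crossed product to dynamical comparison of $\alpha\times\beta$: it is exactly where freeness and amenability are used, through (i) realizing invariant measures as traces via $E$ and matching dimension functions with measures of open supports, (ii) a compactness argument making the approximation uniform over $M_{G\times H}(X\times Y)$, and (iii) the identification of Cuntz subequivalence of such functions in $A$ with dynamical subequivalence of the underlying open sets. An alternative that avoids (ii)--(iii) would instead go through a tensor-product permanence property for tracial $\mathcal{Z}$-stability of sub-$C^\ast$-algebras, combined with the equivalence \ref{it:AlmostFinite.1}$\Leftrightarrow$\ref{it:AlmostFinite.2} of Corollary~\ref{cor:AlmostFinite}.
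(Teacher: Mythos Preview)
Your uniform property $\Gamma$ argument is fine and coincides with Corollary~\ref{cor:Gamma_product}. The gap is in your route to dynamical comparison. The step you label (iii)---that a Cuntz subequivalence $f\precsim g$ in the crossed product $A$, with $f,g\in C(X\times Y)_+$, forces dynamical subequivalence of the underlying open sets---is not contained in \cite{Kerr:JEMS} or \cite{KerrSzabo}. Those references establish the \emph{forward} direction (dynamical subequivalence yields Cuntz subequivalence in $A$). The reverse asks you to upgrade witnesses $t_n\in A$, which are arbitrary crossed-product elements, to an honest open decomposition $U=\bigcup U_i$ with group elements $g_i$ such that the $g_iU_i$ are disjoint inside $V$; this is not known in general. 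In fact, if your chain ``$\mathcal{Z}$-stability of $A$ $\Rightarrow$ strict comparison $\Rightarrow$ dynamical comparison'' were available, then combining it with Corollary~\ref{cor:Gamma_product} and Corollary~\ref{cor:AlmostFinite} would show that $\mathcal{Z}$-stability of the crossed product implies almost finiteness for free minimal actions of amenable groups---precisely the converse that the introduction flags as open.

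The paper avoids this by taking the alternative you mention at the end: it uses the equivalence \ref{it:AlmostFinite.1}$\Leftrightarrow$\ref{it:AlmostFinite.2} of Corollary~\ref{cor:AlmostFinite} and proves directly that tracial $\mathcal{Z}$-stability of $(D_A\subseteq A)$ passes to $(D_A\otimes D_B\subseteq A\otimes B)$. The main technical point is a claim that for any nonzero $s_A\otimes s_B\in(D_A\otimes D_B)_+$ there exists a nonzero $t_A\in(D_A)_+$ with $t_A\otimes 1_B\preceq_{(D_A\otimes D_B\subseteq A\otimes B)} s_A\otimes s_B$; this uses minimality of both actions together with a matrix-amplification argument. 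One then tensors the c.p.c.\ order-zero map coming from tracial $\mathcal{Z}$-stability of $(D_A\subseteq A)$ by $1_B$. Note that this argument stays entirely within the sub-$C^\ast$-algebra comparison relation $\preceq_{(D\subseteq A)}$, whose witnesses are r-normalizers rather than arbitrary crossed-product elements; this is exactly what makes the link back to dynamics via \cite{LiaoTikuisis} go through.
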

\begin{proof}
Set  $(D_A\subseteq A)\coloneqq (C(X)\subseteq C(X)\rtimes_\alpha G)$ and $(D_B\subseteq B)\coloneqq (C(Y)\subseteq C(Y)\rtimes_\beta H)$. By Corollary~\ref{cor:AlmostFinite} it is sufficient to show that $(D_A \otimes D_B \subseteq A \otimes B)$ is tracially $\mathcal{Z}$-stable assuming that $(D_A \subseteq A)$ is.

To that end, let $\epsilon>0$, $n\in\mathbb{N}$, $\mathcal{F} \subseteq A\otimes B$ finite and $s\in(D_A\otimes D_B)_+$ nonzero be given.
Without loss of generality, we can assume that $\mathcal{F}$ consists of elementary tensors, say $\mathcal{F}=\{a_i\otimes b_i\}_{i=1}^k$ where $a_i\in A$ and $b_i\in B$ for all $i=1,\dots,k$. Moreover, by Kirchberg's slice lemma \cite[Lemma~4.1.9]{RordamStormer}, there is some $z\in D_A\otimes D_B$ such that $z^*z\in\overline{s(D_A\otimes D_B)s}$ and $zz^*$ is an elementary tensor $s_A\otimes s_B$ where $s_A\in (D_A)_+\setminus\{0\}$ and $s_B\in (D_B)_+\setminus\{0\}$. This in particular gives $s_A \otimes s_B
\preceq_{(D_A \otimes D_B \subseteq A \otimes B)} s$ (see e.g.\ \cite[Proposition 2.7]{ThielCuntzsemigrp}), so we can substitute $s$ with $s_A \otimes s_B$.
\renewcommand\qedsymbol{$\blacksquare$}
\begin{claim} \label{claim:subequivalence}
There exists a nonzero $t_A \in (D_A)_+$ such that 
\begin{equation}
t_A \otimes 1_B  \preceq_{(D_A \otimes D_B \subseteq A \otimes B)} s_A \otimes s_B.
\end{equation}
\end{claim}
\begin{proof}[Proof of the claim]
For $m \in \mathbb{N}$, let $\{e_{ij} \}_{i,j=1}^m$ be the matrix units of $M_m$, with $e_{11}, \dots , e_{mm}$ generating the diagonal matrices $D_m$.

By compactness of $Y$ and minimality of $\beta$, the space $Y$ can be written as a finite union of translates of the open support of $s_B$, which is nonempty as $s_B \neq 0$.
This means that, writing $v_h$ for the unitary in $B$ corresponding to $h \in H$, there exist $h_1,\dots,h_m \in H$, such that
\begin{equation} 
\sum_{i=1}^m v_{h_i} s_B v_{h_i}^* \sim_{(D_B \subseteq B)} 1_B.
\end{equation} 
Since $v_h s_B v^*_h \sim_{(D_B \subseteq B)} s_B$ for all $h \in H$, we can conclude that
\begin{equation} \label{eq:e11_to_1m}
1_B\otimes e_{11} \preceq_{(D_B\otimes D_m\subseteq B\otimes M_m)} s_B\otimes 1_{M_m}.
\end{equation}

Consider next the open support $\suppo(s_A) \subseteq X$. The space $X$ is infinite (since $G$ is infinite and the action is free), and therefore so is $\suppo(s_A)$ since, by compactness of $X$ and minimality of $\alpha$, the space $X$ can be written as a union of a finite number of translates of $\suppo(s_A)$.
We can therefore find nonempty pairwise disjoint open sets $U_1, \dots, U_m \subseteq \suppo(s_A)$. For any point $x_0 \in X$, by minimality of $\alpha$ there are $g_1, \dots, g_m \in G$ such that $\alpha_{g_j}(x_0) \in U_j$ for each $j=1, \dots, m$, and by continuity we can thus find an open neighborhood $V$ of $x_0$ such that $\alpha_{g_j}(V) \subseteq U_j$, for all $j= 1, \dots , m$. By Urysohn's lemma, there exists $t_A \in C(X)_+$ such that $\suppo(t_A) \subseteq V$ and
\begin{equation}
t_A(x) \le
\min_{1 \le j \le m} s_A(\alpha_{g_j}(x)), \quad x \in V.
\end{equation}

Writing $u_g$ for the unitary in $A$ corresponding to $g \in G$, by construction we have that $\sum_{j=1}^m  u_{g_j} t_A u_{g_j}^* \le s_A$. Note moreover that
\begin{equation}
\sum_{j=1}^m u_{g_j} t_A u_{g_j}^* \otimes e_{jj} \preceq_{(D_A\otimes D_m\subseteq A\otimes M_m)}\left( \sum_{j=i}^m  u_{g_j} t_A u_{g_j}^* \right) \otimes e_{11},
\end{equation}
where in particular the sequence witnessing the Cuntz subequivalence can be chosen in $\mathcal{R}\mathcal{N}_{A \otimes M_m}(D_A \otimes D_m)$ since
$u_{g_1} t_A u_{g_1}^*, \dots ,u_{g_m} t_A u_{g_m}^*$ are pairwise orthogonal positive elements of $C(X)$ (see \cite[Lemma 2.4]{LiaoTikuisis}).
Combining all this with the fact that $u_g t_A u^*_g \sim_{(D_A \subseteq A)} t_A$ for all $g \in G$, we get
\begin{equation} \label{eq:1m_to_e11}
t_A \otimes 1_{M_m} \preceq_{(D_A\otimes D_m\subseteq A\otimes M_m)} s_A \otimes e_{11}.
\end{equation}
We thus obtain
\begin{equation}\begin{array}{rcl}
t_A \otimes 1_B \otimes e_{11} &\stackrel{\mathclap{\eqref{eq:e11_to_1m}}}{\preceq}_{(D_A \otimes D_B\otimes D_m\subseteq A \otimes B\otimes M_m)}& t_A \otimes  s_B\otimes 1_{M_m} \\
& \stackrel{\mathclap{\eqref{eq:1m_to_e11}}}{\preceq}_{(D_A \otimes D_B\otimes D_m\subseteq A \otimes B\otimes M_m)}& s_A \otimes s_B \otimes e_{11},
\end{array}\end{equation}
which in turn gives
\begin{equation}
t_A \otimes 1_B  \preceq_{(D_A \otimes D_B \subseteq A \otimes B)} s_A \otimes s_B.\qedhere
\end{equation}

\end{proof}
\renewcommand\qedsymbol{$\square$}
Take $N > \max_{1\le i\le k}\|b_i\| \ge 0$. By tracial $\mathcal{Z}$-stability of $(D_A\subseteq A)$, there exists a c.p.c.\ order zero map $\psi\colon M_n\to A$ such that
\begin{enumerate}
\item $\psi(\mathcal{N}_{M_n}(D_n)) \subseteq \mathcal{N}_A(D_A)$,
\item \label{item:ZstabA2} $1_A-\psi(1_{M_n})\preceq_{(D_A\subseteq A)} t_A$,
\item  $\| [\psi(x),a_i] \|<\varepsilon /N $, for all contractions $x\in M_n$ and all $i=1,\dots,k$.
\end{enumerate}
Let us check that the c.p.c.\ order zero map $\varphi\colon M_n\to A\otimes B$ defined as $\varphi(x)\coloneqq \psi(x)\otimes 1_B$ satisfies the required conditions to verify tracial $\mathcal{Z}$-stability for the parameters specified at the beginning of the proof. Clearly we have $\varphi(\mathcal{N}_{M_n}(D_n)) \subseteq \mathcal{N}_{A \otimes B}(D_A \otimes D_B)$ and, for $i=1,\dots,k$,
\begin{equation}
\|[\varphi(x),a_i\otimes b_i]\|=\|[\psi(x),a_i]\otimes b_i\|=\|[\psi(x),a_i]\|\|b_i\|<\epsilon.
\end{equation}
Finally, since $1_{A \otimes B} - \phi(1_{M_n}) = (1_A - \psi(1_{M_n})) \otimes 1_B$, using our earlier claim at the last step below we have
\begin{equation}\begin{array}{rcl}
 1_{A \otimes B} - \phi(1_{M_n})
&\stackrel{\mathclap{\ref{item:ZstabA2}}}{\preceq}_{(D_A \otimes D_B \subseteq A \otimes B)}& t_A \otimes 1_B \\
&\preceq_{(D_A \otimes D_B \subseteq A \otimes B)} & s_A \otimes s_B,
\end{array}\end{equation}
as required.
\end{proof}

\bibliography{sbpgamma_updated}
\bibliographystyle{plain}

\end{document}